\documentclass[11pt]{amsart}

\usepackage[T1]{fontenc}
\usepackage[utf8]{inputenc}
\usepackage{lmodern}
\usepackage{amsmath,amssymb,amsthm,mathtools,amscd}
\usepackage{enumitem}
\usepackage{microtype}
\usepackage{geometry}
\usepackage{hyperref}
\hypersetup{colorlinks=true,linkcolor=blue,citecolor=blue,urlcolor=blue}

\newtheorem{theorem}{Theorem}[section]
\newtheorem{proposition}[theorem]{Proposition}
\newtheorem{lemma}[theorem]{Lemma}
\newtheorem{corollary}[theorem]{Corollary}
\theoremstyle{definition}

\theoremstyle{remark}
\newtheorem{remark}[theorem]{Remark}

\newcommand{\Nil}{\operatorname{Nil}}
\newcommand{\Reg}{\operatorname{Reg}}
\newcommand{\Max}{\operatorname{Max}}
\newcommand{\Ann}{\operatorname{Ann}}
\newcommand{\fd}{\operatorname{fd}}
\newcommand{\pd}{\operatorname{pd}}
\newcommand{\Tor}{\operatorname{Tor}}
\newcommand{\Ext}{\operatorname{Ext}}
\newcommand{\wgd}{\operatorname{w.gl.dim}}
\newcommand{\gld}{\operatorname{gl.dim}}
\newcommand{\phiwgd}{\phi\text{-}\operatorname{w.gl.dim}}
\newcommand{\phigd}{\phi\text{-}\operatorname{gl.dim}}
\newcommand{\Frac}{\operatorname{Frac}}
\newcommand{\colim}{\varinjlim}

\title[On the $\phi$-Weak Global Dimensions and $\phi$-Global Dimensions]{On the $\phi$-Weak Global Dimensions and $\phi$-Global Dimensions}

\author{Xiaolei Zhang}
\address{School of Mathematics and Statistics, Tianshui Normal University, Tianshui 741000, China}
\email{zxlrghj@163.com}

\author{Wei Qi}
\address{School of Mathematics and Statistics, Tianshui Normal University, Tianshui 741000, China}
\email{qwrghj@126.com}

\subjclass[2020]{13D05, 13F05}
\keywords{$\phi$-ring, $\phi$-Pr\"ufer ring, $\phi$-Dedekind ring, strongly $\phi$-flat module, $\phi$-weak global dimension, $\phi$-global dimension, Milnor square, valuation domain, flat dimension, projective dimension}

\begin{document}

\begin{abstract}
Let $R$ be a  $\phi$-ring and put $N=\Nil(R)$.  We determine the $\phi$-weak global dimension of every $\phi$-Pr\"ufer ring.  Its only possible values are $0$, $1$, and $\infty$: the value is $0$ when $R/N$ is a field, it is $1$ when $R$ is strong and $R/N$ is a nonfield Pr\"ufer domain, and it is infinite when $R$ is not strong.  No nilpotence assumption on $N$ is needed.  The proof is based on the canonical kernel $K=\ker(R\to\phi(R))$.  In the local non-strong case, $K$ is a nonzero torsion divisible module over the valuation domain $R/N$, and its self-Tor groups satisfy
\[
\Tor^R_{2n}(K,K)=0\quad(n\geq 1),
\qquad
\Tor^R_{2n+1}(K,K)\neq 0\quad(n\geq 0).
\]
It follows that $\fd_RK=\infty$, and a filtered-colimit argument transfers this obstruction to cyclic quotients by nonnil principal ideals.  We also prove the parallel trichotomy for the $\phi$-global dimension of $\phi$-Dedekind rings.

For strong $\phi$-rings we obtain a change-of-rings theorem without assuming that the quotient map $R\to R/N$ splits.  If $D=R/N$, $Q=\Frac(D)$ and $T=T(R)$ is the total quotient ring, then
\[
R\cong T\times_Q D.
\]
Using Ferrand's flat patching theorem and Milnor's projective patching theorem, we prove that every torsion $D$-module $M$, regarded as an $R$-module through $R\to D$, satisfies
\[
\fd_R(M)=\fd_D(M),
\qquad
\pd_R(M)=\pd_D(M).
\]
Consequently, for every strong $\phi$-ring,
\[
\phiwgd(R)=\wgd(R/N),
\qquad
\phigd(R)=\gld(R/N).
\]
\end{abstract}

\maketitle

\section{Introduction}

Throughout, rings are commutative with identity and modules are unitary.  A prime ideal $P$ is called \emph{divided} if $P\subsetneq xR$ for every $x\notin P$.  A ring $R$ is a $\phi$-ring if its nilradical $N=\Nil(R)$ is a divided prime ideal, and it is called a \emph{strong $\phi$-ring} if
\[
Z(R)=N.
\]
The basic theory of divided rings and $\phi$-rings was developed in \cite{Badawi1999,AndersonBadawi2004,Badawi2001}.  A $\phi$-ring is $\phi$-Pr\"ufer if every finitely generated nonnil ideal is $\phi$-invertible.  The fundamental structural characterization is
\[
R\text{ is }\phi\text{-Pr\"ufer}
\Longleftrightarrow
R_{\mathfrak m}\text{ is a }\phi\text{-chain ring for every }\mathfrak m\in\Max(R)
\Longleftrightarrow
R/N\text{ is a Pr\"ufer domain}.
\]
Similarly, $R$ is $\phi$-Dedekind if and only if $R/N$ is a Dedekind domain; see \cite{AndersonBadawi2004,AndersonBadawi2005,Zhao2018}.

Strongly $\phi$-flat modules and strongly nonnil-injective modules were introduced by Zhang, Xing and Qi \cite{ZhangXingQi}.  Their associated relative weak and ordinary global dimensions will be denoted by
\[
\phiwgd(R),\qquad \phigd(R).
\]
The invariant $\phiwgd$ considered here is the one defined through strongly $\phi$-flat resolutions.  In particular, for an NP-ring one has the cyclic formula
\begin{equation}\label{eq:intro-cyclic}
\begin{aligned}
\phiwgd(R)
&=\sup\{\fd_R(R/I): I\text{ is a nonnil ideal}\}\\
&=\sup\{\fd_R(R/I): I\text{ is finitely generated and nonnil}\}.
\end{aligned}
\end{equation}
Moreover,
\begin{equation}\label{eq:small-w}
\phiwgd(R)=0
\Longleftrightarrow
R/N\text{ is a field},
\end{equation}
and
\begin{equation}\label{eq:small-w-one}
\phiwgd(R)\leq 1
\Longleftrightarrow
R\text{ is a strong }\phi\text{-Pr\"ufer ring}.
\end{equation}
Thus the finite values $0$ and $1$ are understood.  The issue is whether a non-strong $\phi$-Pr\"ufer ring can have finite $\phi$-weak global dimension $2,3,\dots$.

For rings with square-zero nilradical, a trichotomy result of this type was obtained in recent work on the $\phi$-weak dimension of $\phi$-Pr\"ufer rings \cite{KimMahdouOubouhou}.  Our first main theorem removes every nilpotence assumption.

\begin{theorem}[Trichotomy for $\phi$-Pr\"ufer rings]\label{thm:intro-trichotomy}
Let $R$ be a $\phi$-Pr\"ufer ring and put $N=\Nil(R)$.  Then
\[
\phiwgd(R)=
\begin{cases}
0,& R/N\text{ is a field},\\
1,& Z(R)=N\text{ and }R/N\text{ is not a field},\\
\infty,& Z(R)\neq N.
\end{cases}
\]
In particular,
\[
\phiwgd(R)\in\{0,1,\infty\}.
\]
\end{theorem}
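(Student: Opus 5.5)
The first two cases follow from results quoted in the introduction. If $R/N$ is a field, then \eqref{eq:small-w} gives $\phiwgd(R)=0$. Now suppose $Z(R)=N$ and $R/N$ is not a field. Since $R$ is $\phi$-Pr\"ufer, it is a strong $\phi$-Pr\"ufer ring, so \eqref{eq:small-w-one} gives $\phiwgd(R)\le 1$, and \eqref{eq:small-w} excludes the value $0$. Hence $\phiwgd(R)=1$. The real content is the third case: $Z(R)\neq N$ should force $\phiwgd(R)=\infty$. Since $Z(R)\neq N$, $R$ is not strong, so \eqref{eq:small-w-one} already gives $\phiwgd(R)\ge 2$. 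The task is to rule out the finite values $\ge 2$.

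For the non-strong case I plan to work with the canonical kernel $K=\ker(R\to\phi(R))$. Recall that $\phi:R\to R_N$ and $K=\{r: rs=0 \text{ for some } s\notin N\}$. Because $N$ is divided, $K\subseteq N$. Moreover $K\neq 0$ precisely when some non-nilpotent element is a zero-divisor, i.e.\ when $Z(R)\neq N$.

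The first reduction is to localize at a maximal ideal $\mathfrak m$ with $K_{\mathfrak m}\ne0$. Then $R_{\mathfrak m}$ is a non-strong $\phi$-chain ring, $D=R_{\mathfrak m}/N_{\mathfrak m}$ is a valuation domain, and the cyclic formula \eqref{eq:intro-cyclic} localizes, since flat dimension can be computed locally. In the local case I would establish three properties of $K$:
\begin{enumerate}[label=(\roman*)]
\item $NK=0$, so $K$ is a $D$-module;
\item $K$ is torsion, because each $k\in K$ is killed by some $s\notin N$;
\item $K$ is divisible: for $s\notin N$ and $k\in K$ we have $k\in N\subsetneq sR$, so $k=sr$, and then $r\in K$ because $\phi(s)$ is a unit.
\end{enumerate}

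The heart of the proof, and the step I expect to be hardest, is showing $\fd_R K=\infty$. My approach is to compute $\Tor^R_\ast(K,K)$ through a periodic resolution. Take $k\in K$ with annihilator a nonnil ideal. Using that principal nonnil ideals $sR$ satisfy $\Ann(s)\subseteq K$, and that $\Ann(\Ann(s))$ is essentially $sR$ plus nil data, I would build $2$-periodic exact pieces
\[
\cdots\to R\xrightarrow{s}R\to\cdots
\]
whose kernels alternate between $sR$ and $\Ann_R(s)$. Writing $K$ as a filtered colimit of such pieces then gives
\[
\Tor_{2n}(K,K)=0\ (n\geq 1),\qquad \Tor_{2n+1}(K,K)\cong K\otimes_R K\text{-type terms}.
\]
For the odd Tor groups I need nonvanishing. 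The plan is to exhibit an element of $\varinjlim \Ann(s)/sK$ that survives, using divisibility of $K$ to make the transition maps surjective and torsionness to make the limit nonzero. I expect delicacy here, since $K\otimes K$ itself vanishes: it is divisible tensor torsion over a valuation domain.

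Finally, I would transfer the obstruction from $K$ to cyclic modules. Over a $\phi$-chain ring, $K$ is a filtered colimit (a union) of cyclic submodules $Rk$. Each $Rk\cong R/\Ann(k)$, and $\Ann(k)$ is a nonnil ideal because $k\in K$. If every such $R/I$ had $\fd\le n$, then $\fd K\le n$, since Tor commutes with filtered colimits. This contradicts $\fd_R K=\infty$. Therefore $\sup\{\fd_R R/I: I \text{ nonnil}\}=\infty$, and \eqref{eq:intro-cyclic} gives $\phiwgd(R)=\infty$.

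One check remains: \eqref{eq:intro-cyclic} is stated for NP-rings, and a $\phi$-Pr\"ufer ring need not be NP. If the formula does not apply directly, I would use instead the definition of $\phiwgd$ through strongly $\phi$-flat resolutions. Each $R/I$ with $I$ nonnil satisfies $N\cdot(R/I)$-type conditions that make its $\phi$-flat dimension equal to its flat dimension.
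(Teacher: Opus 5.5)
Your first two cases, the bound $\phiwgd(R)\ge 2$, the reduction to a local non-strong $\phi$-chain ring, and properties (i)--(iii) of $K$ are all fine. Your divisibility argument via invertibility of $\phi(s)$ is a neat alternative to the paper's.

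\textbf{The main gap.} Nothing in the proposal actually proves $\fd_RK=\infty$, which is the step you yourself call the heart. The $2$-periodic resolution does not exist in general. For $s\in Z(R)\setminus N$, the ideal $\Ann_R(s)$ is a submodule of the torsion divisible $D$-module $K$, and it need not be cyclic. For example, take a nonfield valuation domain $D$, a nonzero nonunit $d$, $Q=\Frac(D)$, and the local $\phi$-chain ring $R=D\ltimes(Q/D)^2$. Then $K=N$ and $\Ann_R((d,0))\cong(D/dD)^2$. So your sketch has no source for the vanishing of $\Tor^R_{2n}(K,K)$. The nonvanishing plan also degenerates: $sK=K\supseteq\Ann_R(s)$, and, as you note, $K\otimes_RK=0$.

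Two ideas are missing.

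First, you need a way to move the computation to $D$.
\begin{enumerate}
\item $L=N/K$ is $S$-local while $S^{-1}K=0$, so $\Tor^R_i(L,K)=0$.
\item Hence $N\otimes_RK=0$ and $\Tor^R_i(N,K)\cong T_i:=\Tor^R_i(K,K)$.
\item For a projective resolution $P_\bullet\to K$, the complex $C_\bullet=D\otimes_RP_\bullet$ has homology $K,0,T_1,T_2,\dots$, and $C_\bullet\otimes_DK\cong P_\bullet\otimes_RK$.
\item Since $\wgd(D)\le 1$, the universal coefficient sequence over $D$ gives $T_1\cong\Tor^D_1(K,K)$, $T_{2m}=0$, and $T_{2m+1}\cong\Tor^D_1(T_{2m-1},K)$.
\end{enumerate}

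Second, you need a genuine nonvanishing theorem: for nonzero torsion divisible modules $U,V$ over a valuation domain, $\Tor^D_1(U,V)$ is nonzero and again torsion divisible. The paper proves nonvanishing as follows.
\begin{enumerate}
\item Choose $u_n$ with $d^{n+1}u_n=0\neq d^nu_n$, and $v_n$ likewise.
\item Then $J=\Ann(u_n)$ and $I=\Ann(v_n)$ both lie in $d^nD$, so $IJ\subseteq d^{2n}D$, while $d^{n+1}\notin d^{2n}D$ for $n\ge 2$.
\item Flatness of $J$ then gives $0\neq\Tor^D_1(D/J,V)\hookrightarrow\Tor^D_1(U,V)$.
\end{enumerate}

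\textbf{The transfer step.} This also fails as written. $K$ need not be a filtered union of cyclic submodules $Rk$. In the example above, $K\cong(Q/D)^2$ is not uniserial, so $Rk_1+Rk_2$ need not lie in any single $Rk$. The paper instead writes $K=\bigcup_{a\notin N}(0:_Ra)$. This union is filtered because $(0:a)+(0:b)\subseteq(0:ab)$. It then uses $\Tor^R_i((0:_Ra),M)\cong\Tor^R_{i+2}(R/aR,M)$ for $i\ge 1$ to deduce $\fd_RK\le d-2$ from $\phiwgd(R)=d$.

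\textbf{A minor point.} Your closing worry is unfounded. A $\phi$-ring has prime nilradical, so it is an NP-ring, and the cyclic formula applies directly.
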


The main point is the non-strong case.  Let
\[
K=\ker(R\longrightarrow \phi(R))
\]
be the canonical kernel.  Dividedness of $N$ gives the elementary identity
\[
N=aN\qquad(a\notin N).
\]
From it we derive
\[
NK=K^2=0,
\qquad
aK=K\qquad(a\notin N).
\]
Consequently $K$ is naturally a torsion divisible module over the domain $D=R/N$.  In the local $\phi$-Pr\"ufer case, $D$ is a valuation domain.  We prove that for nonzero torsion divisible modules $U,V$ over an arbitrary valuation domain,
\[
\Tor^D_1(U,V)\neq 0,
\]
and this Tor module is again torsion divisible.  Combining this fact with a universal coefficient sequence yields an alternating self-Tor pattern for $K$ and hence $\fd_RK=\infty$.  Since $K$ is the filtered union of the annihilator ideals $(0:a)$ with $a\notin N$, finite $\phiwgd(R)$ would force finite flat dimension of $K$, a contradiction.

The second main result concerns the ordinary relative global dimension.

\begin{theorem}[Trichotomy for $\phi$-Dedekind rings]\label{thm:intro-dedekind}
Let $R$ be a $\phi$-Dedekind ring and put $N=\Nil(R)$.  Then
\[
\phigd(R)=
\begin{cases}
0,& R/N\text{ is a field},\\
1,& R\text{ is strong and }R/N\text{ is a nonfield Dedekind domain},\\
\infty,& R\text{ is not strong}.
\end{cases}
\]
Thus $\phigd(R)\in\{0,1,\infty\}$ for every $\phi$-Dedekind ring.
\end{theorem}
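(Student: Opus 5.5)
We prove the three cases of Theorem~\ref{thm:intro-dedekind} separately, following the pattern of Theorem~\ref{thm:intro-trichotomy} with projective dimension in place of flat dimension. The working tool is the cyclic formula for $\phigd$, the analogue of \eqref{eq:intro-cyclic}:
\[
\phigd(R)=\sup\{\pd_R(R/I): I \text{ nonnil ideal}\}.
\]
We expect this formula to follow from the Baer-type criterion for strongly nonnil-injective modules in \cite{ZhangXingQi}.

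\textbf{The value $0$.} Since $\phiwgd(R)\le \phigd(R)$, the case $\phigd(R)=0$ forces $\phiwgd(R)=0$, so $R/N$ is a field by \eqref{eq:small-w}. Conversely, suppose $R/N$ is a field. Then every nonnil ideal contains a unit, because any $a\notin N$ is a unit. Hence the only nonnil ideal is $R$, and the cyclic formula gives $\phigd(R)=0$.

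\textbf{The value $1$.} Assume $R$ is strong and $D=R/N$ is a nonfield Dedekind domain. We invoke the change-of-rings result from the abstract: $\phigd(R)=\gld(D)=1$. If that result is not yet available, we argue directly. Let $I$ be a nonnil ideal. Since $N$ is divided, $N\subsetneq I$. Because $D$ is Dedekind, $I/N$ is finitely generated, so $I=(a_1,\dots,a_k)+N$ for some $a_1,\dots,a_k\in I$. Choosing one of these, say $a_1$, outside $N$, dividedness gives $N\subseteq a_1R$, so $I=(a_1,\dots,a_k)$ is finitely generated. The ideal $I$ is $\phi$-invertible, and in a strong $\phi$-ring nonnil elements are regular. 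Hence $I$ is regular, and $\phi$-invertibility upgrades to genuine invertibility, so $I$ is projective. Therefore $\pd_R(R/I)\le 1$. Equality holds for $R/aR$ with $a$ a nonunit nonnil element, since $aR$ is free but $R/aR$ is not projective.

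\textbf{The value $\infty$.} This is the main case. Assume $R$ is not strong, and suppose for contradiction that $\phigd(R)<\infty$. Since $\phiwgd(R)\le\phigd(R)$, we get $\phiwgd(R)<\infty$. But $R$ is $\phi$-Pr\"ufer, because a Dedekind domain is Pr\"ufer, so Theorem~\ref{thm:intro-trichotomy} gives $\phiwgd(R)=\infty$, a contradiction. The only thing to check is the inequality $\phiwgd\le\phigd$. It follows because projective modules are strongly $\phi$-flat, or equivalently because $\fd_R(R/I)\le\pd_R(R/I)$ combined with the two cyclic formulas.

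\textbf{Main obstacle.} The expected difficulty is establishing the cyclic formula for $\phigd$ in exactly the form needed, with the supremum taken over all nonnil ideals. The plan is to derive it from the characterization of strongly nonnil-injective modules by $\Ext^1(R/I,-)=0$ for nonnil ideals $I$, combined with dimension shifting.
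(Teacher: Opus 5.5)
Your proof is correct and matches the paper's argument in the substantive case. A non-strong $\phi$-Dedekind ring is $\phi$-Pr\"ufer, so $\phiwgd(R)=\infty$ by the $\phi$-Pr\"ufer trichotomy, and $\phiwgd(R)\le\phigd(R)$ then forces $\phigd(R)=\infty$. That inequality comes from $\fd\le\pd$ and the two cyclic formulas; the paper simply cites the $\phigd$ cyclic formula from \cite{ZhangXingQi}, so the obstacle you flag is not something the paper proves. The only difference is in the finite cases: the paper quotes the small-dimension characterization (Theorem \ref{thm:small-global}), whereas you check them directly via the cyclic formula, invertibility of nonnil ideals in a strong $\phi$-Dedekind ring, or Theorem \ref{thm:strong-comparison-equalities}. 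All of these routes are valid.
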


A third problem is the comparison with the reduced quotient.  Zhang--Xing--Qi asked whether, for a strong $\phi$-ring $R$,
\[
\phiwgd(R)=\wgd(R/N)
\qquad\text{and}\qquad
\phigd(R)=\gld(R/N).
\]
A direct extension-of-scalars argument is not available: the canonical map is the surjection $R\to D=R/N$, and there need not exist a ring homomorphism $D\to R$ splitting it.  Moreover, unless $N^2=0$, the nilradical itself is not naturally a $D$-module.  The correct replacement is a pullback construction.

Assume that $R$ is strong and put $S=R\setminus N$.  Then $S=\Reg(R)$, so $T=S^{-1}R=T(R)$.  Dividedness gives $N=sN$ for every $s\in S$, while strongness makes multiplication by $s$ injective on $N$.  Hence $N\to S^{-1}N$ is an isomorphism.  If $Q=\Frac(D)$, then $T/N\cong Q$, and we prove that the natural square
\[
\begin{CD}
R @>>> T\\
@VVV @VVV\\
D @>>> Q
\end{CD}
\]
is cartesian.  Thus
\begin{equation}\label{eq:intro-milnor}
R\cong T\times_QD.
\end{equation}
The map $T\to Q$ is surjective, so \eqref{eq:intro-milnor} is a Milnor square.  Flat modules can therefore be patched by Ferrand's theorem, while projective modules can be patched by Milnor's theorem.

This description gives a stronger result than the two global-dimension equalities.  If $M$ is any torsion $D$-module, then
\begin{equation}\label{eq:intro-module-equalities}
\fd_R(M)=\fd_D(M),
\qquad
\pd_R(M)=\pd_D(M).
\end{equation}
The proof has two ingredients.  First, multiplication by any element of $S$ is an automorphism of $N$, and this implies
\[
\Tor_i^R(N,M)=0\quad(i\geq0),
\qquad
\Tor_i^R(D,M)=0\quad(i\geq1).
\]
This gives the inequalities from $D$ to $R$.  For the reverse inequalities, a finite flat or projective $D$-resolution of $M$ becomes an exact complex of vector spaces after tensoring with $Q$, because $Q\otimes_DM=0$.  That exact $Q$-complex is lifted to a split exact complex of free $T$-modules and then patched degree by degree with the original $D$-resolution.  The resulting complex is a flat, respectively projective, $R$-resolution of the same length.  Applying \eqref{eq:intro-module-equalities} to $D/J$, $0\neq J\subset D$, yields
\begin{equation}\label{eq:intro-equalities}
\phiwgd(R)=\wgd(D),
\qquad
\phigd(R)=\gld(D).
\end{equation}

\section{Preliminaries}

Let $R$ be a $\phi$-ring and set $N=\Nil(R)$.  Since $N$ is divided, for every $a\notin N$ one has
\begin{equation}\label{eq:divided-basic}
N\subseteq aR.
\end{equation}
The canonical map from the total quotient ring into the localization at $N$ induces a homomorphism
\[
R\longrightarrow \phi(R).
\]
Its kernel is
\begin{equation}\label{eq:Kdef}
K=\{x\in R: xy=0\text{ for some }y\in Z(R)\setminus N\}.
\end{equation}
It is standard that
\begin{equation}\label{eq:Kstrong}
K\subseteq N,
\qquad
K=0\Longleftrightarrow Z(R)=N;
\end{equation}
see \cite[Proposition 2.1]{Zhao2018}.

An ideal is called \emph{nonnil} if it is not contained in $N$.  We will use the following characterizations.

\begin{theorem}\cite{Zhao2018}\label{thm:phi-prufer-char}
For a $\phi$-ring $R$ the following are equivalent.
\begin{enumerate}[label=\textup{(\arabic*)}]
\item $R$ is $\phi$-Pr\"ufer;
\item $R_{\mathfrak m}$ is a $\phi$-chain ring for every $\mathfrak m\in\Max(R)$;
\item $R/N$ is a Pr\"ufer domain;
\item $\phi(R)$ is a Pr\"ufer ring.
\end{enumerate}
\end{theorem}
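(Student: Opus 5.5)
The statement is Theorem~\ref{thm:phi-prufer-char}, which is quoted from \cite{Zhao2018}. I would still organize its proof around the reduction to $D=R/N$ and to localizations. The cycle I would prove is (1)$\Rightarrow$(2)$\Rightarrow$(3)$\Rightarrow$(1), and then treat (4) separately. The basic tool is dividedness: for $a\notin N$ one has $N\subseteq aR$, and in fact $N=aN$. Consequently every nonnil ideal $I$ contains $N$, and the ideals of $R$ containing $N$ correspond bijectively to the ideals of $D$. Nonnil finitely generated ideals of $R$ correspond to nonzero finitely generated ideals of $D$, up to adding $N$. Since $N$ is divided, any nonnil ideal $(a_1,\dots,a_n)$ already contains $N$, so no adjustment is needed.

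For (1)$\Leftrightarrow$(3), I would compare $\phi$-invertibility in $R$ with invertibility in $D$. Recall that $\phi(R)=R_N$-image and $\phi(R)/N\cong D$, with $\Nil(\phi(R))=N$ and $\phi(R)$ having divided nilradical. A finitely generated nonnil ideal $I$ is $\phi$-invertible when $\phi(I)\phi(I)^{-1}=\phi(R)$. I would show that this is equivalent to $(I/N)(I/N)^{-1}=D$ in $Q=\Frac(D)$. Each element of $T(\phi(R))=R_N$ either lies in $N$ or is a unit, so $R_N/N\cong Q$. An element $x\in R_N$ satisfies $x\phi(I)\subseteq\phi(R)$ if and only if its image in $Q$ lies in $(I/N)^{-1}$, using $N\subseteq \phi(I)$. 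Passing modulo $N$ then identifies $\phi(I)\phi(I)^{-1}$ with the preimage of $(I/N)(I/N)^{-1}$. This gives (1)$\Leftrightarrow$(3), and by the same computation (3)$\Leftrightarrow$(4), since $\phi(R)/\Nil(\phi(R))\cong D$ and $\phi(R)$ has all regular elements outside its nilradical.

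For (2)$\Leftrightarrow$(3), I would use that localization commutes with the construction: $R_{\mathfrak m}$ is again a $\phi$-ring with $R_{\mathfrak m}/N_{\mathfrak m}\cong D_{\mathfrak m/N}$. A $\phi$-chain ring is one where, for any $x,y\in \phi(R)$ nonnil-relevant, one divides the other. Under dividedness this amounts to $D_{\mathfrak m}$ being a valuation domain, because comparability of nonnil principal ideals is detected modulo $N$, and nil elements are comparable with everything by dividedness. Then the classical fact that a domain is Prüfer if and only if all its localizations at maximal ideals are valuation domains closes the cycle.

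The main obstacle I expect is the bookkeeping in (1)$\Leftrightarrow$(3). Specifically, one has to verify that fractional ideals of $\phi(R)$ containing $N$ correspond exactly to fractional ideals of $D$, and that the inverse $\phi(I)^{-1}$ contains $N$. The second point follows because $N\phi(I)\subseteq N\subseteq\phi(R)$, and $N$ is an ideal of $R_N$. Once this correspondence is established, everything reduces to standard Prüfer domain theory.
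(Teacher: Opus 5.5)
The paper gives no proof of this statement. It is quoted from \cite{Zhao2018}, and the equivalences go back to \cite{AndersonBadawi2004}, so there is no internal argument to compare against. Your sketch is a correct reconstruction of the standard reduction to $D$.

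For (1)$\Leftrightarrow$(3), let $\pi\colon R_N\to R_N/N_N\cong Q$. Then $\ker\pi=N_N\subseteq\phi(R)$ and $\phi(R)=\pi^{-1}(D)$. Hence $\phi(I)^{-1}=\pi^{-1}\bigl((I/N)^{-1}\bigr)$ and $\phi(I)\phi(I)^{-1}=\pi^{-1}\bigl((I/N)(I/N)^{-1}\bigr)$, which is exactly the equivalence.

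For (4), with ``Pr\"ufer'' read in Griffin's sense, the equivalence with (1) is essentially a tautology. The regular ideals of $\phi(R)$ are exactly its nonnil ideals, because every non-nilpotent element of $\phi(R)$ is a unit of $R_N=T(\phi(R))$. Moreover, every finitely generated ideal of $\phi(R)$ is $\phi(I)$ for some finitely generated ideal $I$ of $R$.

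Two of your statements should be corrected, although neither breaks the argument.

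First, $\Nil(\phi(R))$ is $\phi(N)=N_N\cong N/K$, not $N$. The kernel $K=\ker(R\to\phi(R))$ is nonzero precisely when $R$ is not strong, which is the case this paper studies. What you actually use is two facts. One is $\phi(N)=N_N$, which follows from $N=sN$ for $s\notin N$ and makes $\phi(N)$ an ideal of $R_N$. The other is $K\subseteq N$, which gives $\phi(R)/\phi(N)\cong D$.

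Second, nilpotent elements are not comparable with everything. Take $R=\mathbb{Z}_{(p)}\ltimes(\mathbb{Q}\oplus\mathbb{Q})$. It is strong and a $\phi$-chain ring, since its reduction is the DVR $\mathbb{Z}_{(p)}$. Yet $(0,(1,0))$ and $(0,(0,1))$ generate incomparable ideals.

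This comparability is never needed. Use the usual definition: $x\in R_N\setminus\phi(R)$ implies $x^{-1}\in\phi(R)$. Any such $x$ is automatically a unit of $R_N$, since nonunits lie in $N_N\subseteq\phi(R)$. Applying $\pi$ turns the condition into: $q\in Q\setminus D$ implies $q^{-1}\in D$.

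So any $\phi$-ring, local or not, is a $\phi$-chain ring if and only if $R/N$ is a valuation domain. Then (2)$\Leftrightarrow$(3) follows from $R_{\mathfrak m}/N_{\mathfrak m}\cong D_{\mathfrak m/N}$ and the local characterization of Pr\"ufer domains.
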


In particular, if $(R,\mathfrak m)$ is local and $\phi$-Pr\"ufer, then $D=R/N$ is a valuation domain.

The following homological formulas  will be used repeatedly.

\begin{theorem}\cite{ZhangXingQi}\label{thm:ZXQ-weak}
Let $R$ be an NP-ring and let $n\geq 0$.  The following are equivalent.
\begin{enumerate}[label=\textup{(\arabic*)}]
\item $\phiwgd(R)\leq n$;
\item $\Tor^R_{n+1}(R/I,M)=0$ for every $R$-module $M$ and every nonnil ideal $I$;
\item $\fd_R(R/I)\leq n$ for every nonnil ideal $I$;
\item $\fd_R(R/I)\leq n$ for every finitely generated nonnil ideal $I$.
\end{enumerate}
Consequently,
\begin{equation}\label{eq:phi-w-sup}
\phiwgd(R)
=
\sup_{I\text{ nonnil}}\fd_R(R/I)
=
\sup_{I\text{ f.g. nonnil}}\fd_R(R/I).
\end{equation}
Moreover $\phiwgd(R)\leq \wgd(R)$.
\end{theorem}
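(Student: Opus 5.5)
The plan is to reduce everything to the behaviour of $\Tor^R_{n+1}(-,M)$ on $\phi$-torsion modules. I take the definitions from \cite{ZhangXingQi}. An $R$-module $T$ is $\phi$-torsion if every element of $T$ is annihilated by some nonnil ideal. A module $M$ is strongly $\phi$-flat if $\Tor^R_i(T,M)=0$ for all $\phi$-torsion $T$ and all $i\geq 1$. The invariant $\phiwgd(R)$ is the supremum of the strongly $\phi$-flat dimensions of all $R$-modules.

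The first step is the usual dimension-shifting characterization: for a module $M$, its strongly $\phi$-flat dimension is at most $n$ if and only if $\Tor^R_{n+1}(T,M)=0$ for every $\phi$-torsion $T$. To prove it, take a flat resolution of $M$ and let $L_n$ be its $n$-th syzygy. Flat modules are strongly $\phi$-flat, so $\Tor^R_{n+i}(T,M)\cong\Tor^R_i(T,L_n)$ for all $i\geq 1$. Hence $L_n$ is strongly $\phi$-flat exactly when $\Tor^R_{n+1}(T,M)=0$ for all $\phi$-torsion $T$, because the same vanishing also gives the higher $\Tor$ groups: one shifts $T$ instead, or uses that the class is closed under the relevant syzygies. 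Granting this, (1)$\Rightarrow$(2) is immediate, since for $I$ nonnil the cyclic module $R/I$ is $\phi$-torsion, being killed by $I$. The implication (2)$\Rightarrow$(3) follows from the symmetry of $\Tor$, and (3)$\Rightarrow$(4) is trivial.

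For (4)$\Rightarrow$(3), fix a nonnil ideal $I$ and an element $a\in I\setminus N$. Then $I$ is the directed union of the finitely generated ideals $J\subseteq I$ with $a\in J$, and each such $J$ is nonnil. Thus $R/I=\colim R/J$, and since $\Tor$ commutes with filtered colimits, $\Tor^R_{n+1}(R/I,M)=\colim\Tor^R_{n+1}(R/J,M)=0$ for every $M$.

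The main step is (3)$\Rightarrow$(1), and this is where the NP hypothesis (that $N$ is prime) enters; I expect it to be the real obstacle. By the first step it suffices to show $\Tor^R_{n+1}(T,M)=0$ for every $\phi$-torsion $T$. Since $T$ is the filtered union of its finitely generated submodules, which are again $\phi$-torsion, and $\Tor$ commutes with filtered colimits, we may assume $T=Rx_1+\dots+Rx_k$. Filter $T$ by $T_j=Rx_1+\dots+Rx_j$. Each factor $T_j/T_{j-1}$ is cyclic, isomorphic to $R/J_j$ with $J_j\supseteq\Ann(x_j)$. Here $\Ann(x_j)$ contains a nonnil ideal, so $J_j$ is nonnil; primeness of $N$ is what I would invoke to keep annihilators of finite combinations nonnil if a different generating argument is needed. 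By (3), $\Tor^R_{n+1}(R/J_j,M)=0$, and the long exact $\Tor$ sequences, with induction on $k$, give $\Tor^R_{n+1}(T,M)=0$.

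The formula \eqref{eq:phi-w-sup} then follows from the equivalences for each $n$. Finally, $\phiwgd(R)\leq\wgd(R)$ holds because every flat resolution is a strongly $\phi$-flat resolution.
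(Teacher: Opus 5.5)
Your reduction strategy is the standard one; the paper gives no proof of this statement and only cites Zhang--Xing--Qi. However, the ``if'' half of your Step 1 is false for an individual module, and your justification for it (``shift $T$'') does not work.

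Take a non-strong $\phi$-ring $R$ (for instance $R=D\ltimes Q/D$ with $D$ a nonfield valuation domain and $Q=\Frac(D)$), $M=R/N$ and $n=0$. For a nonnil ideal $I$ and $a\in I\setminus N$ one has $N\subseteq I$ and $N=aN\subseteq IN$. Hence $\Tor^R_1(R/I,R/N)\cong(I\cap N)/IN=0$, and your own filtration and colimit argument gives $\Tor^R_1(T,R/N)=0$ for every $\phi$-torsion $T$. But for $a\in Z(R)\setminus N$ the $\phi$-torsion module $R/aR$ satisfies $\Tor^R_2(R/aR,R/N)\cong(0:_Ra)\neq0$ (Proposition~\ref{prop:R/N-strongly-flat}). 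So $R/N$ is not strongly $\phi$-flat. This is exactly the gap between $\phi$-flat and strongly $\phi$-flat behind Corollary~\ref{cor:all-phiflat-strong}.

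Shifting in $T$ fails because syzygies of $\phi$-torsion modules need not be $\phi$-torsion. In $0\to aR\to R\to R/aR\to0$, the module $aR\cong R/(0:_Ra)$ is not $\phi$-torsion, since $(0:_Ra)\subseteq N$.

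The theorem itself survives, because in (3)$\Rightarrow$(1) you have much more than the vanishing of a single Tor group. Hypothesis (3) gives $\Tor^R_m(R/J,M)=0$ for \emph{every} $m\geq n+1$ and every module $M$. Running your colimit and filtration argument for each such $m$ yields $\Tor^R_m(T,M)=0$ for all $m\geq n+1$ and all $\phi$-torsion $T$. That is precisely $\Tor^R_i(T,L_n)=0$ for all $i\geq1$, so $L_n$ is strongly $\phi$-flat. Alternatively, since $\Tor^R_{n+1}(T,-)$ vanishes on \emph{all} modules, you may shift in the second variable by applying the hypothesis to the syzygies of $M$.

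So Step 1 should read: the strongly $\phi$-flat dimension of $M$ is at most $n$ if and only if $\Tor^R_{n+i}(T,M)=0$ for all $i\geq1$ and all $\phi$-torsion $T$. Its ``only if'' half is what (1)$\Rightarrow$(2) uses, and it is correct by dimension shifting through a strongly $\phi$-flat resolution. With this change the remaining steps are fine. Incidentally, your filtration argument never uses primeness of $N$: $J_j\supseteq\Ann(x_j)$ is nonnil regardless.
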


\begin{theorem}\cite{ZhangXingQi}\label{thm:small-weak}
Let $R$ be a $\phi$-ring.
\begin{enumerate}[label=\textup{(\arabic*)}]
\item $\phiwgd(R)=0$ if and only if $R$ is $\phi$-von Neumann regular; equivalently, $R/N$ is a field.
\item $\phiwgd(R)\leq 1$ if and only if $R$ is a strong $\phi$-Pr\"ufer ring.
\end{enumerate}
\end{theorem}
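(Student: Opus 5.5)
The statement follows from Theorem~\ref{thm:ZXQ-weak} applied to the $\phi$-ring $R$. Note that a $\phi$-ring is an NP-ring, since $N$ is prime. Two elementary facts will be used throughout. First, if $a\notin N$ and $ab=0$, then $b\in N$, because $ab=0\in N$ and $N$ is prime; thus $(0:a)\subseteq N$. Second, dividedness gives $N\subseteq aR$. Writing $n=ar$, the same primality argument gives $r\in N$, so in fact $N=aN$ for $a\notin N$.

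For part (1), $\phiwgd(R)=0$ means that $R/I$ is flat for every nonnil ideal $I$. Suppose first that $R/N$ is a field. Then $N$ is the unique maximal ideal, because every maximal ideal contains $N$. Every element outside $N$ is therefore a unit, so every nonnil ideal equals $R$, and $R/I=0$ is flat. Conversely, suppose $R/N$ is not a field. Pick a maximal ideal $\mathfrak m\supsetneq N$ and an element $a\in\mathfrak m\setminus N$. Flatness of $R/aR$ gives $aR\cap aR=a^2R$, so $a=a^2r$ for some $r\in R$. Then $1-ar\in(0:a)\subseteq N\subseteq\mathfrak m$. Since also $ar\in\mathfrak m$, this forces $1\in\mathfrak m$, a contradiction.

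For part (2), the condition $\phiwgd(R)\le1$ is equivalent to $I$ being flat for every finitely generated nonnil ideal $I$.

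Suppose first that $R$ is strong and $\phi$-Pr\"ufer. I will check flatness of $I$ locally. By Theorem~\ref{thm:phi-prufer-char}, each $R_{\mathfrak m}$ is a $\phi$-chain ring, so $I_{\mathfrak m}=(a/t)R_{\mathfrak m}$ for some $a\in R\setminus N$, $t\notin\mathfrak m$. Since $N=Z(R)$, the element $a$ is regular in $R$. Regularity passes to $R_{\mathfrak m}$: if $ax/u=0$, then $sax=0$ for some $s\notin\mathfrak m$, so $sx=0$. Hence $I_{\mathfrak m}\cong R_{\mathfrak m}$ is free, and $I$ is flat.

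Conversely, assume every finitely generated nonnil ideal is flat.

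First I show that $R$ is strong. Take $a\notin N$. Then $aR\cong R/(0:a)$ is flat, so for $J=(0:a)\subseteq N$ the cyclic module $R/J$ is flat. This gives $J\cap J=J^2$ locally. So for $x\in J$ and each maximal $\mathfrak m$, we have $x=xy$ in $R_{\mathfrak m}$ with $y\in J_{\mathfrak m}\subseteq N_{\mathfrak m}$. Since $1-y$ is a unit, $x=0$ locally, hence $x=0$. Thus every $a\notin N$ is regular, so $Z(R)\subseteq N$. Nilpotents are zero-divisors, so $Z(R)=N$.

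Next I show that $D=R/N$ is Pr\"ufer. Let $\bar I$ be a nonzero finitely generated ideal of $D$, lifted to a finitely generated nonnil ideal $I\ni a$ with $a\notin N$. Since $I$ is flat over $R$, the $D$-module $I\otimes_RD=I/NI$ is flat over $D$. Moreover $I\cap N\subseteq N=aN\subseteq IN$, so $I/NI\cong I/(I\cap N)\cong ID=\bar I$. Thus every finitely generated ideal of the domain $D$ is flat. By the standard fact that a finitely generated flat ideal of a local domain is principal, these ideals are locally principal, hence invertible, and $D$ is Pr\"ufer. Theorem~\ref{thm:phi-prufer-char} then shows that $R$ is $\phi$-Pr\"ufer.

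The main obstacle I expect is this last step, passing from flatness over $R$ to flatness over $D$. The identity $N=aN$ is what makes $I\otimes_RD$ coincide with the ideal $ID$ rather than merely surject onto it. A secondary point is checking that regularity of $a$ survives localization in the forward direction.
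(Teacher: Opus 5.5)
The paper gives no proof of this statement; it is quoted from Zhang--Xing--Qi, as is the cyclic criterion in Theorem~\ref{thm:ZXQ-weak}. So your argument is a different, self-contained route. You derive both parts from Theorem~\ref{thm:ZXQ-weak} and Theorem~\ref{thm:phi-prufer-char}, using only $(0:a)\subseteq N$ and $N=aN$ for $a\notin N$. That is the same mechanism as Lemma~\ref{lem:N=aN} and Proposition~\ref{prop:K-identities}.

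The following parts are sound:
\begin{enumerate}
\item Part (1).
\item The forward half of (2). The local principal-generator argument is fine.
\item The Pr\"ufer half of the converse. Your observation that $N=aN\subseteq IN$ forces $I\otimes_RD\cong\bar I$ is exactly what lets flatness descend to $D$. The fact that finitely generated flat ideals of a local domain are principal is also correct, since a finitely generated flat module over a local ring is free.
\end{enumerate}
What your route buys is that the small-dimension results become internal consequences of tools already in the paper, rather than imports.

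One caveat for (1): you never use the definition of $\phi$-von Neumann regularity. What you actually prove is that $\phiwgd(R)=0$ if and only if $R/N$ is a field. The identification of the latter with $\phi$-von Neumann regularity is taken from the literature, which is acceptable but should be stated.

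One inference in your strongness step is a non sequitur. From flatness of $R/J$, with $J=(0:a)$, you get $J=J^2$. You then assert $x=xy$ with $y\in J_{\mathfrak m}$, but idempotence does not give this: it only writes $x$ as a sum $\sum y_iz_i$. Moreover, a nonzero nil ideal can be idempotent, for example the maximal ideal of $k[t^{\alpha}:\alpha\in\mathbb{Q}_{\geq 0}]/(t)$. So ``$J\subseteq N$ and $J=J^2$'' alone does not force $J=0$.

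There are three easy repairs:
\begin{enumerate}
\item Use the correct consequence of flatness, $xR\cap J=xJ$ for $x\in J$, that is, $\Tor^R_1(R/J,R/xR)=0$. This gives $x=xy$ with $y\in J\subseteq N$ globally. Since $1-y$ is a unit, $x=0$.
\item Use your own identity $N=aN$ to note that $NJ=0$: if $n=an'$ with $n'\in N$, then $nx=n'(ax)=0$. Hence $J=J^2\subseteq NJ=0$.
\item Cite Proposition~\ref{prop:low-degree}, which already gives $\phiwgd(R)\geq 3$ for non-strong $R$. Its proof does not depend on the statement you are proving.
\end{enumerate}
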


The quotient by a nonnil ideal can always be written over the domain $D=R/N$.

\begin{proposition}\label{prop:nonnil-contains-N}
Let $R$ be a $\phi$-ring, $N=\Nil(R)$ and $D=R/N$.  Every nonnil ideal $I$ contains $N$.  Hence
\[
R/I\cong D/(I/N)
\]
for every nonnil ideal $I$, and
\begin{equation}\label{eq:phi-w-Dquotients}
\phiwgd(R)
=
\sup\{\fd_R(D/J):0\neq J\text{ is an ideal of }D\}.
\end{equation}
The same supremum may be taken over finitely generated nonzero ideals of $D$.
\end{proposition}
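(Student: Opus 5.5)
The plan has two steps: first show that a nonnil ideal contains $N$, then translate the supremum formula of Theorem~\ref{thm:ZXQ-weak} into the language of $D$. I will assume, as Theorem~\ref{thm:ZXQ-weak} requires, that the $\phi$-ring is an NP-ring, i.e.\ $N$ is prime. This holds automatically, since in a $\phi$-ring $N$ is a divided prime.

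\emph{Step 1: every nonnil ideal contains $N$.} Let $I$ be a nonnil ideal and pick $a\in I\setminus N$. By dividedness, \eqref{eq:divided-basic} gives $N\subseteq aR\subseteq I$. Hence $N\subseteq I$, and the third isomorphism theorem gives
\[
R/I\cong (R/N)/(I/N)=D/(I/N)
\]
as $R$-modules, with $D$-modules regarded as $R$-modules via $R\to D$.

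\emph{Step 2: the ideal correspondence.} The maps $I\mapsto I/N$ and $J\mapsto \pi^{-1}(J)$ are mutually inverse bijections. On one side are the ideals of $R$ containing $N$. On the other are the ideals of $D$. Under this bijection, $I$ is nonnil exactly when $I\neq N$, that is, exactly when $J=I/N\neq 0$. By Step 1, the nonnil ideals of $R$ are precisely the ideals $\pi^{-1}(J)$ with $0\neq J$. Substituting this into \eqref{eq:phi-w-sup} and using $R/\pi^{-1}(J)\cong D/J$ gives \eqref{eq:phi-w-Dquotients}.

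\emph{Step 3: the finitely generated version.} This is the only point needing care, because $\pi^{-1}(J)$ need not be finitely generated even when $J$ is: $N$ itself may fail to be finitely generated. I will use the second equality in \eqref{eq:phi-w-sup}, which takes the supremum over finitely generated nonnil ideals $I$.

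\begin{itemize}
\item If $I$ is finitely generated and nonnil, then $J=I/N$ is a finitely generated nonzero ideal of $D$, and $R/I\cong D/J$.
\item Conversely, let $J=(\bar a_1,\dots,\bar a_k)\neq 0$. We may assume $a_1\notin N$, after discarding generators lying in $N$ if needed. Put $I=(a_1,\dots,a_k)R$.
\item This $I$ is finitely generated and nonnil. By Step 1 it contains $N$, so $I=\pi^{-1}(J)$ and $R/I\cong D/J$.
\end{itemize}

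Thus the two index sets correspond exactly, and the suprema over finitely generated ideals agree.
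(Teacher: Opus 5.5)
Your proof is correct and follows essentially the same route as the paper: dividedness gives $N\subseteq aR\subseteq I$, the ideal correspondence converts the supremum in Theorem~\ref{thm:ZXQ-weak} into one over nonzero ideals of $D$, and for a finitely generated $J\neq 0$ a generator outside $N$ forces $\pi^{-1}(J)=(a_1,\dots,a_k)$. One small correction: your motivating remark that $\pi^{-1}(J)$ need not be finitely generated is true only for $J=0$, since your own Step~3 shows that $\pi^{-1}(J)$ is finitely generated whenever $J\neq 0$ is.
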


\begin{proof}
Let $I$ be nonnil and choose $a\in I\setminus N$.  By dividedness, $N\subseteq aR\subseteq I$.  Hence $J=I/N$ is a nonzero ideal of $D$ and $R/I\cong D/J$.

Conversely, let $0\neq J\subseteq D$, and let $I$ be its inverse image in $R$.  Then $N\subseteq I$ and $I$ is nonnil.  Thus \eqref{eq:phi-w-Dquotients} follows from Theorem \ref{thm:ZXQ-weak}.

It remains to justify the finite-generation assertion.  If $I$ is finitely generated then so is $I/N$.  Conversely, suppose
\[
J=(\overline a_1,\dots,\overline a_t)
\]
is a finitely generated nonzero ideal of $D$.  Choose an index $j$ with $a_j\notin N$.  The inverse image of $J$ is
\[
I=N+(a_1,\dots,a_t).
\]
Since $N\subseteq a_jR$ by dividedness, one gets
\[
I=(a_1,\dots,a_t),
\]
so $I$ is finitely generated.
\end{proof}

\begin{remark}
Formula \eqref{eq:phi-w-Dquotients} does not by itself identify $\phiwgd(R)$ with $\wgd(D)$: the flat dimension on the right is computed over $R$, not over $D$.  Section 9 gives a precise comparison.
\end{remark}

\section{Localization and principal reduction}

We first verify the behavior of nonnil elements under localization.

\begin{lemma}\label{lem:local-nonnil}
Let $\mathfrak m\in\Max(R)$ and let $a\notin N$.  If $a\in\mathfrak m$, then $a/1$ is not nilpotent in $R_{\mathfrak m}$.
\end{lemma}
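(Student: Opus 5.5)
We argue by contradiction: suppose $a/1$ is nilpotent in $R_{\mathfrak m}$. Then there are $n\geq 1$ and $s\in R\setminus\mathfrak m$ with $sa^n=0$ in $R$. The goal is to show that this forces $a\in N$, which contradicts the hypothesis $a\notin N$.

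\emph{Case $s\in N$.} We would like $s\notin N$, but at first sight $s$ could be nilpotent. This cannot happen. The maximal ideal $\mathfrak m$ is prime, so it contains every nilpotent element, and hence $N\subseteq\mathfrak m$. Since $s\notin\mathfrak m$, we get $s\notin N$.

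\emph{Case $s\notin N$.} Now both $s$ and $a^n$ lie outside $N$: for $a^n$, note that $a^n\in N$ would make $a$ nilpotent, so $a\notin N$ gives $a^n\notin N$. But $N$ is prime, so $sa^n\notin N$. On the other hand, $sa^n=0\in N$, which is a contradiction.

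Hence $a/1$ is not nilpotent. (The hypothesis $a\in\mathfrak m$ is used only to place the statement inside the maximal ideal of $R_{\mathfrak m}$; the argument itself does not need it.)

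I do not expect a real obstacle here. The only step needing care is checking that $s\notin N$, and this follows from the inclusion $N\subseteq\mathfrak m$ together with the primality of $N$, which holds because $R$ is a $\phi$-ring. Dividedness of $N$ is not needed.
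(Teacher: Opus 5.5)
Your proof is correct and is essentially the paper's argument. From $sa^n=0$ with $s\notin\mathfrak m$, primality of $N$ together with $a^n\notin N$ and $N\subseteq\mathfrak m$ yields the contradiction; the paper phrases this as forcing $s\in N\subseteq\mathfrak m$. Your first ``case'' is really just the observation that $s\notin N$, so the case split is redundant but leaves no gap.
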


\begin{proof}
Suppose $(a/1)^t=0$ for some $t\geq 1$.  Then there exists $s\notin\mathfrak m$ with
\[
sa^t=0.
\]
Since $0\in N$ and $N$ is prime, while $a^t\notin N$, it follows that $s\in N\subseteq\mathfrak m$, a contradiction.
\end{proof}

\begin{proposition}\label{prop:localization-phi-w}
For every $\phi$-ring $R$,
\[
\phiwgd(R)=\sup_{\mathfrak m\in\Max(R)}\phiwgd(R_{\mathfrak m}).
\]
\end{proposition}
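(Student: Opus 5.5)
We reduce both sides to cyclic quotients $R/I$ by nonnil ideals and then use that flat dimension is computed locally. By Theorem \ref{thm:ZXQ-weak} (applicable because a $\phi$-ring is an NP-ring), $\phiwgd(R)=\sup_I \fd_R(R/I)$ over nonnil ideals $I$. Flat dimension localizes: $\fd_R(M)=\sup_{\mathfrak m}\fd_{R_{\mathfrak m}}(M_{\mathfrak m})$, since $\Tor^R_i(M,L)_{\mathfrak m}\cong \Tor^{R_{\mathfrak m}}_i(M_{\mathfrak m},L_{\mathfrak m})$ and every $R_{\mathfrak m}$-module is the localization of itself viewed as an $R$-module. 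Each $R_{\mathfrak m}$ is again a $\phi$-ring, because $\Nil(R_{\mathfrak m})=N_{\mathfrak m}$ and dividedness is inherited under localization. So the formula for $\phiwgd(R_{\mathfrak m})$ is available as well.

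For ``$\le$'', fix a nonnil ideal $I$ and a maximal ideal $\mathfrak m$. If $I\not\subseteq\mathfrak m$, then $(R/I)_{\mathfrak m}=0$ and this $\mathfrak m$ contributes nothing. If $I\subseteq \mathfrak m$, choose $a\in I\setminus N$. Lemma \ref{lem:local-nonnil} shows $a/1$ is not nilpotent, so $I_{\mathfrak m}$ is a nonnil ideal of $R_{\mathfrak m}$. Hence $\fd_{R_{\mathfrak m}}(R_{\mathfrak m}/I_{\mathfrak m})\le \phiwgd(R_{\mathfrak m})$, and taking suprema gives $\fd_R(R/I)\le\sup_{\mathfrak m}\phiwgd(R_{\mathfrak m})$.

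For ``$\ge$'', fix $\mathfrak m$ and a nonnil ideal $I'$ of $R_{\mathfrak m}$, and set $I=I'\cap R$, its contraction. Then $I_{\mathfrak m}=I'$, so $R_{\mathfrak m}/I'\cong (R/I)_{\mathfrak m}$. We need $I$ to be nonnil. Take $x/s\in I'$ not nilpotent, so $x\in I$; if $x\in N$, then $x/s\in N_{\mathfrak m}$ would be nilpotent, a contradiction. Therefore
\[
\fd_{R_{\mathfrak m}}(R_{\mathfrak m}/I')\le \fd_R(R/I)\le \phiwgd(R),
\]
using the localization inequality $\fd_{R_{\mathfrak m}}(M_{\mathfrak m})\le\fd_R(M)$.

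The main obstacle is checking that $R_{\mathfrak m}$ is a $\phi$-ring, so that Theorem \ref{thm:ZXQ-weak} applies to it. Primality of $N_{\mathfrak m}$ is clear. For dividedness, let $x/s\notin N_{\mathfrak m}$, so $x\notin N$. Then $N\subseteq xR$, which gives $N_{\mathfrak m}\subseteq (x/1)R_{\mathfrak m}=(x/s)R_{\mathfrak m}$. This suffices.
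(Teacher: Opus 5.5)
Your proof is correct and follows the paper's argument. You reduce to cyclic quotients by nonnil ideals via Theorem \ref{thm:ZXQ-weak}, use the local formula for flat dimension together with Lemma \ref{lem:local-nonnil} for ``$\le$'', and lift a nonnil ideal of $R_{\mathfrak m}$ to a nonnil ideal of $R$ for ``$\ge$''. The differences are minor: you work with all nonnil ideals and take contractions where the paper uses finitely generated ideals and lifts their generators, and you explicitly check that $R_{\mathfrak m}$ is again a $\phi$-ring, which the paper assumes without comment.
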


\begin{proof}
Let $I$ be a finitely generated nonnil ideal.  By the local formula for flat dimension,
\[
\fd_R(R/I)=
\sup_{\mathfrak m\in\Max(R)}
\fd_{R_{\mathfrak m}}(R_{\mathfrak m}/I_{\mathfrak m}).
\]
If $I_{\mathfrak m}=R_{\mathfrak m}$, the corresponding term is zero.  Otherwise choose $a\in I\setminus N$.  Then $a\in\mathfrak m$, and Lemma \ref{lem:local-nonnil} shows that $a/1$ is nonnilpotent in $R_{\mathfrak m}$.  Hence $I_{\mathfrak m}$ is nonnil and
\[
\fd_{R_{\mathfrak m}}(R_{\mathfrak m}/I_{\mathfrak m})
\leq \phiwgd(R_{\mathfrak m}).
\]
Taking suprema and using Theorem \ref{thm:ZXQ-weak} gives
\[
\phiwgd(R)
\leq
\sup_{\mathfrak m}\phiwgd(R_{\mathfrak m}).
\]

Conversely, fix $\mathfrak m$ and let $J$ be a finitely generated nonnil ideal of $R_{\mathfrak m}$.  Write
\[
J=(x_1/s_1,\dots,x_t/s_t)
\qquad (s_i\notin\mathfrak m),
\]
and put $I=(x_1,\dots,x_t)$.  Then $J=I_{\mathfrak m}$.  If every $x_i$ belonged to $N$, then
\[
J\subseteq N_{\mathfrak m}=\Nil(R_{\mathfrak m}),
\]
contrary to the assumption that $J$ is nonnil.  Thus $I$ is nonnil.  Localization yields
\[
\fd_{R_{\mathfrak m}}(R_{\mathfrak m}/J)
\leq
\fd_R(R/I)
\leq
\phiwgd(R).
\]
Taking the supremum over $J$ proves the reverse inequality.
\end{proof}

\begin{corollary}\label{cor:principal-reduction}
Let $(R,\mathfrak m)$ be a local $\phi$-Pr\"ufer ring.  Every finitely generated nonnil ideal is principal, and
\[
\phiwgd(R)
=
\sup_{a\notin N}\fd_R(R/aR).
\]
\end{corollary}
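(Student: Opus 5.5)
We prove the two assertions separately: first that finitely generated nonnil ideals of $R$ are principal, then the formula.

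\emph{Principality.} Since $(R,\mathfrak m)$ is local and $\phi$-Pr\"ufer, Theorem \ref{thm:phi-prufer-char} shows that $D=R/N$ is a valuation domain. Let $I=(a_1,\dots,a_t)$ be a finitely generated nonnil ideal. By Proposition \ref{prop:nonnil-contains-N}, $N\subseteq I$ and $J=I/N=(\overline a_1,\dots,\overline a_t)$ is a nonzero finitely generated ideal of $D$. The ideals of a valuation domain are totally ordered, so $J=\overline a_jD$ for some $j$, and $\overline a_j\neq 0$, i.e.\ $a_j\notin N$. Then
\[
I=a_jR+N.
\]
Dividedness gives $N\subseteq a_jR$, hence $I=a_jR$.

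\emph{The formula.} A local $\phi$-Pr\"ufer ring falls under the hypotheses of Theorem \ref{thm:ZXQ-weak}; here we use that $\phi$-rings are the NP-rings considered there, or we invoke directly the finitely generated form \eqref{eq:phi-w-Dquotients} of Proposition \ref{prop:nonnil-contains-N}. This gives
\[
\phiwgd(R)=\sup\{\fd_R(R/I): I \text{ f.g.\ nonnil}\}.
\]
By the first part, every such $I$ has the form $aR$ with $a\notin N$. Conversely, each $aR$ with $a\notin N$ is finitely generated and nonnil. So the two index sets coincide and the suprema agree.

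The only step needing care is the principal reduction $I=a_jR$. It relies on two facts: the total order of ideals in the valuation domain $D$ picks out a single generator $\overline a_j$ modulo $N$, and the dividedness inclusion $N\subseteq a_jR$ absorbs the nilradical. Everything else is a direct citation of earlier results.
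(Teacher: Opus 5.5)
Your proof is correct and follows essentially the same route as the paper. The paper takes the linear ordering among the principal ideals $a_iR$ with $a_i\notin N$ in the $\phi$-chain ring $R$, whereas you take it in the valuation domain $D=R/N$ and lift via $I=a_jR+N$; in both versions dividedness absorbs $N$, and the formula is then read off from the finitely generated form of Theorem~\ref{thm:ZXQ-weak}, just as in the paper.
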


\begin{proof}
By Theorem \ref{thm:phi-prufer-char}, $R$ is a $\phi$-chain ring.  Let
\[
I=(a_1,\dots,a_t)
\]
be nonnil.  Among the generators outside $N$, the corresponding principal ideals are linearly ordered.  Choose $a_j\notin N$ such that $a_jR$ contains every other nonnil generator.  Since $N\subseteq a_jR$, all nilpotent generators also belong to $a_jR$.  Thus $I=a_jR$.  The formula follows from Theorem \ref{thm:ZXQ-weak}.
\end{proof}

\section{The canonical kernel}

The next elementary identity drives the rest of the argument.

\begin{lemma}\label{lem:N=aN}
If $a\notin N$, then
\[
N=aN.
\]
\end{lemma}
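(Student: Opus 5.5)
The inclusion $aN\subseteq N$ is immediate, because $N$ is an ideal. The work is in the reverse inclusion $N\subseteq aN$, and the plan is to use dividedness twice: once for $a$ and once for $a^2$.

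Fix $x\in N$. Since $a\notin N$ and $N$ is prime, $a^2\notin N$. Applying \eqref{eq:divided-basic} to the element $a^2$ gives $N\subseteq a^2R$, so
\[
x=a^2r\qquad\text{for some } r\in R.
\]
Set $y=ar$. Then $x=ay$, and it remains to show that $y\in N$. We have $ay=x\in N$, and $N$ is prime with $a\notin N$, so $y\in N$. Therefore $x=ay\in aN$, which proves $N\subseteq aN$.

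The passage to $a^2$ is not actually essential. Already $x=ar$ with $x\in N$ and $a\notin N$ forces $r\in N$ by primeness, so using $N\subseteq aR$ directly works too. The only points to keep in view are that dividedness makes $N$ land inside $aR$ for every non-nilpotent $a$, and that primality of $N$ lets us strip the factor $a$ off an element of $N$. There is no real obstacle; the identity is a one-line consequence of \eqref{eq:divided-basic} together with the primality of $N$.
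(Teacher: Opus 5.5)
Your proof is correct and is essentially the paper's argument: dividedness gives $N\subseteq aR$, and primality of $N$ forces the cofactor of $a$ to lie in $N$. The detour through $a^2$ is harmless but unnecessary, and the direct version in your final paragraph is exactly the paper's proof.
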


\begin{proof}
Dividedness gives $N\subseteq aR$.  If $x\in N$, write $x=ar$.  Since $N$ is prime and $a\notin N$, one has $r\in N$.  Hence $N\subseteq aN$.  The reverse inclusion is immediate.
\end{proof}

\begin{proposition}\label{prop:K-identities}
Let $K$ be the ideal in \eqref{eq:Kdef}.  Then
\[
NK=0,
\qquad
K^2=0,
\qquad
aK=K\quad(a\notin N).
\]
Consequently $K$ is naturally a torsion divisible module over $D=R/N$.
\end{proposition}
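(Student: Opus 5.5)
We prove the three identities directly from the description of $K$ in \eqref{eq:Kdef}, using Lemma \ref{lem:N=aN} and primeness of $N$, and then read off the $D$-module structure.

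\emph{Step 1: $NK=0$.} Let $x\in K$, say $xy=0$ with $y\in Z(R)\setminus N$, and let $n\in N$. By Lemma \ref{lem:N=aN} applied to $a=y$, we can write $n=yn'$ with $n'\in N$. Then
\[
nx=n'(xy)=0 .
\]
Hence $NK=0$. Since $K\subseteq N$ by \eqref{eq:Kstrong}, this gives $K^2\subseteq NK=0$.

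\emph{Step 2: $aK=K$ for $a\notin N$.} The inclusion $aK\subseteq K$ holds because $K$ is an ideal. For the reverse, let $x\in K$ with $xy=0$, $y\in Z(R)\setminus N$. Since $x\in K\subseteq N$, Lemma \ref{lem:N=aN} gives $x=ar$ with $r\in N$. We must show $r\in K$. We have $a(ry)=xy=0$. Although $a$ need not be regular, $ay\notin N$ because $N$ is prime, and
\[
r\cdot(ay)=(ar)y=xy=0 .
\]
It remains to check $ay\in Z(R)$. This holds because $ay$ annihilates the element $x$ or $r$ whenever that element is nonzero; if $x=0$ there is nothing to prove. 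Thus $r\in K$, and so $x\in aK$.

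\emph{Step 3: the $D$-module structure.} Since $NK=0$, $K$ is a module over $D=R/N$. The identity $aK=K$ for every $a\notin N$ says exactly that multiplication by each nonzero element of $D$ is surjective on $K$, so $K$ is divisible.

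For torsion, take $x\in K$ with $xy=0$, $y\in Z(R)\setminus N$. Then $\bar y\neq 0$ in $D$ and $\bar y x=0$, so every element of $K$ is killed by a nonzero element of $D$.

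\emph{Main obstacle.} The only delicate point is Step 2: verifying that the preimage $r$ lies in $K$. A zero-divisor witness has to be produced outside $N$, and the product $ay$ supplies one, as checked above.
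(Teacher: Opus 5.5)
Your proof is correct and follows essentially the same route as the paper: $NK=0$ comes from writing $n=yn'$ with $n'\in N$ via $N=yN$. For $aK=K$, you write $x=ar$ with $r\in N$ and note that $ay\in Z(R)\setminus N$ kills $r$ (and kills $x\neq 0$), which is the paper's argument with $sa$ and $y$; the torsion divisible $D$-module structure is then read off in the same way.
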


\begin{proof}
Take $x\in K$.  By definition, there exists $s\in Z(R)\setminus N$ such that $sx=0$.  For $n\in N$, Lemma \ref{lem:N=aN} gives $n=sn_0$ for some $n_0\in N$.  Therefore
\[
nx=sn_0x=n_0(sx)=0.
\]
Thus $NK=0$.  Since $K\subseteq N$, this also gives $K^2=0$.

Now fix $a\notin N$.  Clearly $aK\subseteq K$.  Conversely, let $x\in K$.  By Lemma \ref{lem:N=aN}, write $x=ay$ with $y\in N$.  Choose $s\in Z(R)\setminus N$ with $sx=0$.  If $x=0$, there is nothing to prove.  Otherwise $y\neq 0$, and
\[
(sa)y=0.
\]
Since $N$ is prime, $sa\notin N$.  Moreover $sa$ is a zero-divisor because it annihilates the nonzero element $y$.  Hence $y\in K$ by \eqref{eq:Kdef}.  Thus $x\in aK$, proving $aK=K$.

Because $NK=0$, the $R$-action on $K$ factors through $D$.  Every nonzero element of $D$ is represented by some $a\notin N$, and multiplication by $a$ on $K$ is surjective; hence $K$ is divisible over $D$.  By definition of $K$, each element of $K$ is killed by some element of $R\setminus N$, whose image in $D$ is nonzero.  Thus $K$ is torsion.
\end{proof}

Put
\[
S=R\setminus N,
\qquad
L=N/K.
\]

\begin{lemma}\label{lem:L-local}
Multiplication by every $s\in S$ is an automorphism of $L$.  Hence $L$ is naturally an $S^{-1}R$-module, whereas
\[
S^{-1}K=0.
\]
\end{lemma}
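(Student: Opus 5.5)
Fix $s\in S$ and write $\mu_s$ for multiplication by $s$ on $L=N/K$.  The plan is to check that $\mu_s$ is bijective and then read off the module structures.

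\emph{Surjectivity.}  By Lemma \ref{lem:N=aN} we have $N=sN$.  Every class $\overline{x}\in L$ with $x\in N$ therefore has the form $\overline{x}=\overline{sy}=s\overline{y}$ for some $y\in N$.

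\emph{Injectivity.}  Suppose $y\in N$ and $sy\in K$.  We must show $y\in K$.  By \eqref{eq:Kdef} there is $t\in Z(R)\setminus N$ with $tsy=0$.  Since $N$ is prime, $ts\notin N$.  If $y=0$ there is nothing to prove.  Otherwise $ts$ kills the nonzero element $y$, so $ts\in Z(R)\setminus N$.  Hence $y\in K$ by \eqref{eq:Kdef}.  This is the same trick as in the proof of Proposition \ref{prop:K-identities}.  One point needs care: the witness $ts$ must be a zero-divisor.  This is why we split off the case $y=0$ rather than claim $t s\in Z(R)$ directly.

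\emph{Module structures.}  Every element of $S$ now acts invertibly on $L$.  By the universal property of localization, the $R$-module structure on $L$ extends uniquely to an $S^{-1}R$-module structure, namely $(r/s)\cdot\ell=\mu_s^{-1}(r\ell)$.  Equivalently, the canonical map $L\to S^{-1}L$ is an isomorphism.

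\emph{Vanishing of $S^{-1}K$.}  By Proposition \ref{prop:K-identities}, $K$ is torsion over $D$: each $x\in K$ is killed by some element of $R\setminus N=S$.  Concretely, the witness $t\in Z(R)\setminus N$ from \eqref{eq:Kdef} already lies in $S$.  Hence $x/1=0$ in $S^{-1}K$, and so $S^{-1}K=0$.

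\emph{Main obstacle.}  There is no serious obstacle.  The only delicate step is the injectivity argument, specifically ensuring that the annihilating element $ts$ is a genuine zero-divisor.
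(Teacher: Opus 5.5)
Your proof is correct and follows the paper's argument step for step: surjectivity from $N=sN$, injectivity via the witness $ts\in Z(R)\setminus N$, and $S^{-1}K=0$ because each element of $K$ is killed by an element of $S$. The only difference is cosmetic: the paper obtains $ts\in Z(R)$ directly from $t\in Z(R)$ (if $tu=0$ with $u\neq 0$, then $(ts)u=0$), so your case split on $y=0$ is harmless but not actually needed.
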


\begin{proof}
Surjectivity on $L$ follows from $N=sN$.  Suppose
\[
s(x+K)=0
\]
for some $x\in N$.  Then $sx\in K$, so there exists $t\in Z(R)\setminus N$ with $tsx=0$.  Since $N$ is prime, $ts\notin N$.  Also $ts$ is a zero-divisor because $t$ is a zero-divisor.  Hence $x\in K$, proving injectivity.

Finally, every element of $K$ is annihilated by an element of $S$, so $S^{-1}K=0$.
\end{proof}

\begin{lemma}\label{lem:Tor-LK-zero}
For every $i\geq 0$,
\[
\Tor^R_i(L,K)=0.
\]
\end{lemma}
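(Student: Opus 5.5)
The plan is to exploit the two opposite behaviours recorded in Lemma \ref{lem:L-local} and Proposition \ref{prop:K-identities}. Every $s\in S$ acts invertibly on $L$, while every element of $K$ is killed by some element of $S$. Put $T'=S^{-1}R$. Since $L$ is an $S^{-1}R$-module, we have $L\cong T'\otimes_R L$, and because $T'$ is flat over $R$ we can use flat base change:
\[
\Tor^R_i(L,K)\cong\Tor^R_i(L\otimes_R T',K)\cong \Tor^{T'}_i(L,T'\otimes_R K)\qquad(i\geq 0).
\]
By Lemma \ref{lem:L-local}, $T'\otimes_RK=S^{-1}K=0$, so the right-hand side vanishes for every $i$.

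The steps are as follows, in order.

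First, I will record that the $R$-module structure on $L$ extends uniquely to a $T'$-module structure. Multiplication by each $s\in S$ is bijective on $L$, so $s^{-1}$ acts on $L$. Consequently the natural map $L\to T'\otimes_R L=S^{-1}L$ is an isomorphism.

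Second, I will justify the base-change isomorphism
\[
\Tor^R_i(T'\otimes_R L,K)\cong\Tor^{T'}_i(T'\otimes_RL,\,T'\otimes_RK).
\]
To do this, take a flat resolution $F_\bullet\to K$ over $R$. Localizing gives $S^{-1}F_\bullet\to S^{-1}K$, which is a flat $T'$-resolution because localization is exact. Then compute
\[
(T'\otimes_R L)\otimes_R F_\bullet\cong (T'\otimes_R L)\otimes_{T'}(T'\otimes_R F_\bullet).
\]
Taking homology of both sides yields the isomorphism.

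Third, I will conclude using $S^{-1}K=0$. Its resolution $S^{-1}F_\bullet$ is then exact and flat, so all $\Tor^{T'}_i(-,0)$ vanish. This gives the result.

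Alternatively, the same argument can be phrased elementwise. $\Tor^R_i(L,K)$ is an $R$-module on which each $s\in S$ acts bijectively, because $s$ is bijective on $L$ and Tor is functorial in the first variable. On the other hand, $\Tor$ commutes with filtered colimits, and $K$ is the union of its finitely generated submodules, each killed by a single $s\in S$ (take the product of the annihilators of the generators, which lies in $S$ since $S$ is multiplicative). Hence every element of $\Tor^R_i(L,K)$ is killed by some $s\in S$, and bijectivity of that $s$ forces the element to be $0$.

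The main obstacle is only the bookkeeping in the base-change step. The key point to state carefully is that Tor can be computed with either variable resolved, and that $S^{-1}$ is exact. No other difficulty is expected; either of the two arguments above suffices.
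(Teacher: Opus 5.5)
Your proposal is correct and is essentially the paper's argument. Both proofs combine the two halves of Lemma~\ref{lem:L-local} ($S$ acts bijectively on $L$, and $S^{-1}K=0$) through flat base change of Tor along $R\to S^{-1}R$; the only cosmetic difference is that the paper first transfers $S$-locality to $\Tor^R_i(L,K)$ and then localizes that module, whereas you replace $L$ by $S^{-1}L$ before base change. Your elementwise alternative, using filtered colimits and the fact that $s$ is both bijective and zero on the relevant Tor, is also valid, and it is the natural extension of Remark~\ref{rem:annihilated-version}.
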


\begin{proof}
By Lemma \ref{lem:L-local}, multiplication by each $s\in S$ is an automorphism of $L$, and therefore also of $\Tor^R_i(L,K)$.  Thus $\Tor^R_i(L,K)$ is $S$-local.  On the other hand, localization of Tor gives
\[
S^{-1}\Tor^R_i(L,K)
\cong
\Tor^{S^{-1}R}_i(S^{-1}L,S^{-1}K)
=0.
\]
An $S$-local module whose localization is zero must itself be zero.
\end{proof}

We also record an elementary tensor vanishing result.

\begin{lemma}\label{lem:div-tors-tensor}
Let $D$ be a domain, let $U$ be a divisible $D$-module, and let $T$ be a torsion $D$-module.  Then
\[
U\otimes_D T=0.
\]
\end{lemma}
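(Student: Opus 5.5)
The plan is to check vanishing on generators of the tensor product, i.e.\ on the simple tensors $u\otimes t$ with $u\in U$ and $t\in T$. Since these elements generate $U\otimes_D T$ as an abelian group, it suffices to show that each of them is zero.

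Fix $u\in U$ and $t\in T$. Because $T$ is torsion, there is a nonzero $d\in D$ with $dt=0$. Because $U$ is divisible, multiplication by $d$ is surjective on $U$, so we may write $u=du'$ for some $u'\in U$. Using $D$-bilinearity of the tensor product, we then compute
\[
u\otimes t=du'\otimes t=u'\otimes dt=u'\otimes 0=0.
\]
Hence every generator vanishes, and therefore $U\otimes_D T=0$.

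There is essentially no obstacle. The only point to watch is that divisibility is being used in the weak sense, namely $dU=U$ for every nonzero $d$. This is exactly the sense in which Proposition~\ref{prop:K-identities} establishes divisibility of $K$, so the argument needs no torsion-freeness of $U$ and no injectivity of multiplication by $d$.
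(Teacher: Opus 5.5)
Your proposal is correct and matches the paper's proof: both kill each simple tensor $u\otimes t$ by choosing a nonzero $d$ with $dt=0$, writing $u=du'$ via divisibility, and moving $d$ across the tensor sign. Your remark that only $dU=U$ is needed, not torsion-freeness of $U$, is accurate and is exactly the form of divisibility established for $K$ in Proposition~\ref{prop:K-identities}.
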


\begin{proof}
Given $u\otimes t$, choose $0\neq d\in D$ with $dt=0$.  Write $u=du_0$ by divisibility.  Then
\[
u\otimes t=u_0\otimes dt=0.
\]
\end{proof}

\begin{proposition}\label{prop:K-not-fg}
Let $(R,\mathfrak m)$ be a local non-strong $\phi$-ring.  Then
\[
K=\mathfrak mK.
\]
In particular, $K$ is not finitely generated.
\end{proposition}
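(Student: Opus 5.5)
The argument is short once we check that $\mathfrak m\not\subseteq N$.

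\textbf{Step 1: $\mathfrak m\neq N$.} Since $R$ is non-strong, by \eqref{eq:Kstrong} there is an element $s\in Z(R)\setminus N$. In a local ring every zero-divisor is a nonunit, so $s\in\mathfrak m\setminus N$. In particular $\mathfrak m\neq N$, and we may fix $a\in\mathfrak m\setminus N$.

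\textbf{Step 2: $K=\mathfrak m K$.} By Proposition~\ref{prop:K-identities}, $K=aK$. Since $a\in\mathfrak m$, this gives $K=aK\subseteq \mathfrak mK\subseteq K$, hence $K=\mathfrak mK$.

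\textbf{Step 3: $K$ is not finitely generated.} Again by \eqref{eq:Kstrong}, non-strongness gives $K\neq 0$. If $K$ were finitely generated, then $K=\mathfrak mK$ together with Nakayama's lemma would force $K=0$, a contradiction.

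The only point needing care is the existence of a nonnil element in $\mathfrak m$ in Step~1. It comes from the zero-divisor witnessing non-strongness: such an element must be a nonunit, so it lies in $\mathfrak m$, and by choice it lies outside $N$.
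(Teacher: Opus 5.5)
Your proposal is correct and follows the paper's proof essentially verbatim: you pick a zero-divisor outside $N$, note that it is a nonunit and hence lies in $\mathfrak m$, use $aK=K$ from Proposition~\ref{prop:K-identities} to get $K=\mathfrak mK$, and conclude with $K\neq 0$ and Nakayama's lemma. The only cosmetic difference is that the existence of an element of $Z(R)\setminus N$ comes directly from the definition of non-strong (since $N\subseteq Z(R)$ always), so citing \eqref{eq:Kstrong} for it is unnecessary, though harmless.
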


\begin{proof}
Since $R$ is non-strong, choose $a\in Z(R)\setminus N$.  A zero-divisor is not a unit, so $a\in\mathfrak m$.  Proposition \ref{prop:K-identities} gives $aK=K$, hence
\[
K=aK\subseteq\mathfrak mK\subseteq K.
\]
Thus $K=\mathfrak mK$.  Since $K\neq 0$ by \eqref{eq:Kstrong}, Nakayama's lemma shows that $K$ cannot be finitely generated.
\end{proof}

\section{Torsion divisible modules over valuation domains}

The following theorem is the module-theoretic core of the self-Tor calculation.

\begin{theorem}\label{thm:Tor1-divisible}
Let $D$ be a valuation domain and let $U,V$ be nonzero torsion divisible $D$-modules.  Then
\[
\Tor^D_1(U,V)\neq 0.
\]
Moreover, $\Tor^D_1(U,V)$ is a torsion divisible $D$-module.
\end{theorem}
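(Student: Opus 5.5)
Both assertions will come from one computation with the principal ideals of $D$. Since $U$ is nonzero and torsion, pick $0\neq u\in U$ and $0\neq d\in D$ with $du=0$. The short exact sequence $0\to D\xrightarrow{d} D\to D/dD\to 0$ gives, after tensoring with $V$ (flatness of $D$),
\[
\Tor^D_1(D/dD,V)\cong V[d]:=\{v\in V: dv=0\}.
\]
This is nonzero: $V\neq 0$ is torsion, so some $0\neq v\in V$ has $ev=0$ for some $e\neq 0$. In a valuation domain either $e\mid d$ or $d\mid e$. If $d=ee'$, then $v\in V[d]$. If $e=dd'$, then $d'v\neq 0$ or $d'v=0$; in the first case $d'v\in V[d]$ is nonzero, and in the second we replace $e$ by $d'$ and repeat. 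To avoid that loop, use divisibility instead: write $v=e\,w$, so $w\neq 0$. Iterating gives elements killed by larger and larger multiples, which points the wrong way. The cleaner route is this: $V$ is divisible and $V\neq 0$, so for any $0\neq v$ with $ev=0$, divisibility gives $v=(e/d)\cdot$-type rescalings. Concretely, if $d\mid e$, write $e=dd'$ and choose $w$ with $v=$ … the simplest approach is to note that $d'v$ is killed by $d$, and that if $d'v=0$ we are already in the case "killed by $d'$ with $d'\mid e$". Then use divisibility to write $v=d''x$ with $d''d'$ chosen appropriately to produce a nonzero $d$-torsion element. I will settle the exact choice when writing it out. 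The key point is that in a valuation domain any nonzero divisible torsion module has $V[d]\neq 0$ for every nonzero nonunit $d$. Indeed, if $ev=0$, write $v=e x$ by divisibility; then $x$ is killed by $e^2$ but not by $e$. Repeating shows that $V[e^n]$ strictly grows, and then cofinality of the powers relative to $d$ finishes the argument. If $d$ is a unit then $U[d]=0$, so $d$ can be taken to be a nonunit.

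Next I will transfer from cyclic modules to $U$. Write $U=\colim U_\lambda$ as the filtered union of its finitely generated submodules. Over a valuation domain, each finitely generated torsion module is a finite direct sum of cyclic modules $D/I$ (Warfield-type structure for finitely generated modules over valuation domains). To avoid relying on that, I will instead use the injection $Du\cong D/\Ann(u)\hookrightarrow U$. This gives $\Tor_2(U/Du,V)\to\Tor_1(Du,V)\to\Tor_1(U,V)$. The map on the right is injective because $\wgd D\le 1$ (valuation domains are Prüfer), so $\Tor_2=0$. Hence
\[
\Tor_1(D/\Ann(u),V)\hookrightarrow \Tor_1(U,V).
\]
For $I=\Ann(u)\neq 0$, the sequence $0\to I\to D\to D/I\to0$ gives $\Tor_1(D/I,V)=\ker(I\otimes V\to V)$. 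Choosing $d\in I$ nonzero, the inclusion $D/I\leftarrow D/dD$ is surjective, so comparing directly is awkward. It is simpler to pick $u$ cleverly: using divisibility of $U$, replace $u$ by $x$ with $u=ax$ so that $\Ann(x)$ is principal. Alternatively, I will compute $\Tor_1(D/I,V)\cong (I\otimes V)$-kernel via $I=\colim$ of principal $bD\subseteq I$, which gives $\colim_{b\in I}V[b]$, which is nonzero since $V[b]\neq 0$ and the transition maps $V[b]\to V[b']$ (for $b'\mid b$, induced by multiplication by $b/b'$) are surjective by divisibility. This last step, the nonvanishing of the colimit, is where I expect the main obstacle. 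Surjectivity of the transition maps together with nonzero terms does not alone force a nonzero colimit, so I will check injectivity, or at least nonvanishing of images, by an explicit annihilator argument.

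For torsion divisibility, $\Tor_1^D(U,V)$ is torsion because $U\otimes Q=0$ and Tor commutes with localization: $\Tor_1^D(U,V)\otimes Q=\Tor_1^Q(U_Q,V_Q)=0$. For divisibility, multiplication by $a\neq0$ is surjective on $U$, with kernel $U[a]$. This yields the exact sequence $\Tor_1(U,V)\xrightarrow{a}\Tor_1(U,V)\to \Tor_0(U[a],V)=U[a]\otimes V$. The last term vanishes by Lemma~\ref{lem:div-tors-tensor}, since $U[a]$ is torsion and $V$ is divisible. Hence multiplication by $a$ is surjective on $\Tor_1(U,V)$. Here the long exact sequence uses $\Tor_2=0$, which again holds because $\wgd D\le1$.
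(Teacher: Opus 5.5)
Your torsion and divisibility argument is correct and is the paper's own: $Q\otimes_D\Tor^D_1(U,V)=0$, and $0\to U[a]\to U\xrightarrow{a}U\to 0$ together with $U[a]\otimes_DV=0$. Your reduction $\Tor^D_1(Du,V)\hookrightarrow\Tor^D_1(U,V)$ via $\wgd(D)\le 1$ is also the paper's. The gap is the nonvanishing itself, which you leave unresolved and which rests on a false claim.

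Your ``key point'' that $V[d]\neq 0$ for every nonzero nonunit $d$ is false. Suppose $D$ has a prime $0\neq P\subsetneq\mathfrak m$. Then $V=Q/D_P$ is nonzero, torsion and divisible. But every $d\in\mathfrak m\setminus P$ is a unit of $D_P$, so it acts bijectively on $V$, and $V[d]=0$. Hence an arbitrary choice of $u$ cannot work. With $U=Q/D$ and $u=d^{-1}+D$, you get $Du\cong D/dD$ and $\Tor^D_1(Du,V)\cong V[d]=0$. Nor can you always arrange a principal annihilator: if $\mathfrak m$ is not finitely generated, every nonzero element of $Q/\mathfrak m$ has annihilator $\mathfrak m$ or $q^{-1}\mathfrak m\cong\mathfrak m$. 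Your description $\Tor^D_1(D/I,V)\cong\colim_{b\in I}V[b]$, with transition maps given by multiplication by $b/b'$, is correct. However, you explicitly leave open whether this colimit is nonzero, and that is the whole content. Your supporting remark that the terms $V[b]$ are nonzero is both false and insufficient.

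The missing idea is to choose $u$ and $v$ together and ``deep'' along one common element.
\begin{enumerate}[label=\textup{(\arabic*)}]
\item Take nonzero $u_0\in U$ and $v_0\in V$ with $\alpha u_0=0=\beta v_0$. Put $d=\alpha\beta$, a nonzero nonunit killing both.
\item By divisibility, choose $u_n,v_n$ with $d^nu_n=u_0$ and $d^nv_n=v_0$.
\item Set $J=\Ann(u_n)$ and $I=\Ann(v_n)$. Then $d^{n+1}\in I\cap J$ and $d^n\notin I\cup J$.
\item Since ideals of $D$ are linearly ordered, $I,J\subseteq d^nD$. Hence $IJ\subseteq d^{2n}D$, and so $d^{n+1}\notin IJ$ for $n\geq 2$.
\end{enumerate}
In your colimit language, the class of $v_n\in V[d^{n+1}]$ dies in $\colim_{b\in J}V[b]$ iff $(d^{n+1}/b')v_n=0$ for some $b'\in J$ dividing $d^{n+1}$. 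That happens iff $d^{n+1}\in JI$. So the class survives, and $\Tor^D_1(Du_n,V)\neq 0$. The paper phrases the same point as the nonzero element $d^{n+1}\otimes v_n\in J/IJ\hookrightarrow J\otimes_DV$ lying in the kernel of $J\otimes_DV\to V$.
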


\begin{proof}
Choose $0\neq u_0\in U$ and $0\neq v_0\in V$.  Since $U$ and $V$ are torsion, there exist nonzero nonunits $\alpha,\beta\in D$ such that
\[
\alpha u_0=0,
\qquad
\beta v_0=0.
\]
Set $d=\alpha\beta$.  Then $d$ is a nonzero nonunit and
\[
du_0=dv_0=0.
\]
By divisibility, choose recursively elements $u_n\in U$ and $v_n\in V$ satisfying
\[
du_{n+1}=u_n,
\qquad
dv_{n+1}=v_n
\qquad(n\geq 0).
\]
Then for every $n\geq 0$,
\[
d^{n+1}u_n=0,
\qquad
d^nu_n=u_0\neq 0,
\]
and similarly
\[
d^{n+1}v_n=0,
\qquad
d^nv_n=v_0\neq 0.
\]
Fix $n\geq 2$ and put
\[
J=\Ann_D(u_n),
\qquad
I=\Ann_D(v_n).
\]
Thus
\[
d^{n+1}\in I\cap J,
\qquad
d^n\notin I\cup J.
\]
Since the ideals of a valuation domain are linearly ordered, comparison with the principal ideal $d^nD$ gives
\[
I\subseteq d^nD,
\qquad
J\subseteq d^nD.
\]
Hence
\[
IJ\subseteq d^{2n}D.
\]
We claim that
\begin{equation}\label{eq:d-not-IJ}
d^{n+1}\notin IJ.
\end{equation}
Indeed, if $d^{n+1}\in d^{2n}D$, then
\[
d^{n+1}=d^{2n}c
\]
for some $c\in D$.  Cancellation in the domain gives
\[
1=d^{n-1}c,
\]
which would make the nonunit $d$ invertible.  This proves \eqref{eq:d-not-IJ}.

The cyclic module $Dv_n\cong D/I$ embeds in $V$.  Every ideal of a valuation domain is flat, so tensoring this monomorphism with $J$ preserves injectivity:
\[
J\otimes_D D/I\longrightarrow J\otimes_DV.
\]
Under the standard isomorphism
\[
J\otimes_DD/I\cong J/IJ,
\]
the tensor $d^{n+1}\otimes v_n$ corresponds to the class of $d^{n+1}$ and is nonzero by \eqref{eq:d-not-IJ}.  Its image under the multiplication map
\[
J\otimes_DV\longrightarrow V
\]
is zero because $d^{n+1}v_n=0$.  From
\[
0\longrightarrow J\longrightarrow D\longrightarrow D/J\longrightarrow 0
\]
we conclude that
\[
\Tor^D_1(D/J,V)\neq 0.
\]
Now $Du_n\cong D/J$ embeds in $U$.  Since a valuation domain has weak global dimension at most one, the exact sequence
\[
0\longrightarrow D/J\longrightarrow U\longrightarrow U/Du_n\longrightarrow 0
\]
yields an injection
\[
\Tor^D_1(D/J,V)
\longrightarrow
\Tor^D_1(U,V).
\]
Thus $\Tor^D_1(U,V)\neq 0$.

It remains to prove divisibility and torsion.  Let $0\neq r\in D$.  Since $U$ is divisible,
\[
0\longrightarrow U[r]\longrightarrow U\xrightarrow{\,r\,}U\longrightarrow 0,
\qquad
U[r]=\{u\in U:ru=0\},
\]
is exact.  The module $U[r]$ is torsion.  By Lemma \ref{lem:div-tors-tensor}, applied after interchanging the two tensor factors,
\[
U[r]\otimes_DV=0.
\]
The long exact Tor sequence therefore shows that multiplication by $r$ on $\Tor^D_1(U,V)$ is surjective.  Hence $\Tor^D_1(U,V)$ is divisible.

Let $Q=\Frac(D)$.  Localization of Tor gives
\[
Q\otimes_D\Tor^D_1(U,V)
\cong
\Tor^Q_1(Q\otimes_DU,Q\otimes_DV)=0
\]
because $U$ and $V$ are torsion.  Thus $\Tor^D_1(U,V)$ is torsion.
\end{proof}

\begin{corollary}\label{cor:iterated-Tor}
Let $0\neq U$ be a torsion divisible module over a valuation domain $D$.  Define
\[
U_0=U,
\qquad
U_{n+1}=\Tor^D_1(U,U_n).
\]
Then every $U_n$ is a nonzero torsion divisible $D$-module.
\end{corollary}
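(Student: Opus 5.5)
The plan is a straightforward induction on $n$, using Theorem \ref{thm:Tor1-divisible} as the inductive step.

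\emph{Base case.} For $n=0$ we have $U_0=U$. By hypothesis this is a nonzero torsion divisible $D$-module, so there is nothing to prove.

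\emph{Inductive step.} Suppose $U_n$ is a nonzero torsion divisible $D$-module. Apply Theorem \ref{thm:Tor1-divisible} with the choices $U$ and $V=U_n$. Both modules are nonzero torsion divisible modules over the valuation domain $D$, so the theorem applies. Its first conclusion gives
\[
U_{n+1}=\Tor^D_1(U,U_n)\neq 0.
\]
Its second conclusion says that $U_{n+1}$ is torsion divisible. This is exactly the inductive hypothesis for $n+1$, and the induction closes.

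\emph{Where the weight lies.} I do not expect any real obstacle here. All of the work is already contained in Theorem \ref{thm:Tor1-divisible}, and in particular in its ``moreover'' clause. That clause is what guarantees the hypotheses are reproduced at each stage, so the iteration can continue indefinitely.

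The only point to check is that the theorem imposes nothing beyond ``nonzero, torsion, divisible'' on its two arguments. It does not require, for instance, that they be equal, or that $V$ carry some extra structure inherited from $U$. Reading its statement confirms this, so the induction goes through as described.
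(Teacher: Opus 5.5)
Your induction is correct and is the paper's argument. The paper's proof simply applies Theorem \ref{thm:Tor1-divisible} inductively with $V=U_n$, and the ``moreover'' clause is what reproduces the hypotheses at each step.
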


\begin{proof}
Apply Theorem \ref{thm:Tor1-divisible} inductively.
\end{proof}

\section{A self-Tor computation for the canonical kernel}

Throughout this section $(R,\mathfrak m)$ is a local $\phi$-Pr\"ufer ring,
\[
N=\Nil(R),
\qquad
D=R/N,
\qquad
K=\ker(R\to\phi(R)),
\]
and $K\neq 0$.  Then $D$ is a valuation domain.

We begin with the universal coefficient sequence needed below.

\begin{lemma}[Universal coefficient lemma]\label{lem:UCT}
Let $A$ be a ring with $\wgd(A)\leq 1$, let $C_\bullet$ be a chain complex of flat $A$-modules, and let $M$ be an $A$-module.  For every integer $n$ there is a natural short exact sequence
\begin{equation}\label{eq:UCT}
0\longrightarrow H_n(C_\bullet)\otimes_AM
\longrightarrow H_n(C_\bullet\otimes_AM)
\longrightarrow \Tor^A_1(H_{n-1}(C_\bullet),M)
\longrightarrow 0.
\end{equation}
\end{lemma}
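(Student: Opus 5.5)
The plan is to run the standard Künneth argument for Dedekind/Prüfer rings, using that over a ring with $\wgd(A)\le 1$ every submodule of a flat module is flat.

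First I reduce to complexes with zero differentials. Write $Z_n=\ker(d_n)$ and $B_{n-1}=\operatorname{im}(d_n)\subseteq C_{n-1}$. Since $\wgd(A)\le 1$, flat dimension is hereditary on submodules of flat modules. Indeed, for a submodule $X\subseteq C$ with $C$ flat, the sequence $0\to X\to C\to C/X\to 0$ gives $\Tor_1^A(X,-)\cong\Tor_2^A(C/X,-)=0$. Hence $Z_n$ and $B_{n-1}$ are flat. This gives a short exact sequence of complexes
\[
0\longrightarrow Z_\bullet\longrightarrow C_\bullet\xrightarrow{\,d\,} B_{\bullet-1}\longrightarrow 0,
\]
where $Z_\bullet$ and $B_\bullet$ carry zero differentials. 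Because each $B_{n-1}$ is flat, the sequence stays degreewise exact after applying $-\otimes_AM$. The long exact homology sequence then reads
\[
B_n\otimes M\xrightarrow{\ \partial\ } Z_n\otimes M\longrightarrow H_n(C\otimes M)\longrightarrow B_{n-1}\otimes M\xrightarrow{\ \partial\ } Z_{n-1}\otimes M .
\]
I will check, as is routine, that the connecting map $\partial$ is $\iota\otimes 1_M$, where $\iota:B_n\hookrightarrow Z_n$ is the inclusion.

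Second, I identify the kernel and cokernel of $\iota\otimes 1$. Tensor the sequence $0\to B_n\to Z_n\to H_n(C)\to 0$ with $M$. This produces
\[
0=\Tor_1^A(Z_n,M)\to\Tor_1^A(H_n(C),M)\to B_n\otimes M\to Z_n\otimes M\to H_n(C)\otimes M\to 0,
\]
where the leftmost term vanishes because $Z_n$ is flat. Hence $\operatorname{coker}(\iota\otimes 1)\cong H_n(C)\otimes M$ and $\ker(\iota\otimes 1)\cong\Tor_1^A(H_n(C),M)$. Substituting these into the long exact sequence, with the index shifted for the right-hand term, gives exactly \eqref{eq:UCT}. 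Naturality in $C_\bullet$ and $M$ follows because every map used comes from functorial constructions.

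The main point to get right is the sign and identification of the connecting homomorphism. Everything else is the flatness of $Z_n$ and $B_n$, and that is where the hypothesis $\wgd(A)\le1$ enters. Unlike the Dedekind case, no splitting is claimed, and none is needed.
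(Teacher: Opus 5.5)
Your proposal is correct and is essentially the paper's argument: both use flatness of $Z_n$ and $B_n$ (from $\wgd(A)\leq 1$) to keep $0\to Z_n\otimes_AM\to C_n\otimes_AM\to B_{n-1}\otimes_AM\to 0$ exact. The difference is only bookkeeping. You organize the argument through the connecting map $\iota\otimes 1$ and compute its kernel using flatness of $Z_{n-1}$. The paper instead identifies $\ker(B_{n-1}\otimes_AM\to C_{n-1}\otimes_AM)$ with $\Tor^A_1(C_{n-1}/B_{n-1},M)\cong\Tor^A_1(H_{n-1}(C_\bullet),M)$ using flatness of $B_{n-2}$; this is the same kernel, since $Z_{n-1}\otimes_AM\to C_{n-1}\otimes_AM$ is injective.
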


\begin{proof}
Write
\[
Z_n=\ker(d_n:C_n\to C_{n-1}),
\qquad
B_n=\operatorname{Im}(d_{n+1}:C_{n+1}\to C_n).
\]
Since $\wgd(A)\leq 1$, every submodule of a flat $A$-module is flat.  Hence each $Z_n$ and $B_n$ is flat.  Therefore
\[
0\longrightarrow Z_n\longrightarrow C_n\longrightarrow B_{n-1}\longrightarrow 0
\]
remains exact after tensoring with $M$:
\begin{equation}\label{eq:tensor-ZCB}
0\longrightarrow Z_n\otimes_AM
\longrightarrow C_n\otimes_AM
\longrightarrow B_{n-1}\otimes_AM
\longrightarrow 0.
\end{equation}
The boundaries in $C_\bullet\otimes_AM$ are precisely the image of
\[
B_n\otimes_AM\longrightarrow Z_n\otimes_AM.
\]
Put $Q_{n-1}=C_{n-1}/B_{n-1}$.  From
\[
0\longrightarrow B_{n-1}\longrightarrow C_{n-1}\longrightarrow Q_{n-1}\longrightarrow 0
\]
one gets
\[
\ker(B_{n-1}\otimes_AM\to C_{n-1}\otimes_AM)
\cong
\Tor^A_1(Q_{n-1},M).
\]
The exact sequence
\[
0\longrightarrow H_{n-1}(C_\bullet)
\longrightarrow Q_{n-1}
\longrightarrow B_{n-2}
\longrightarrow 0
\]
and the flatness of $B_{n-2}$ yield
\[
\Tor^A_1(Q_{n-1},M)
\cong
\Tor^A_1(H_{n-1}(C_\bullet),M).
\]
Using \eqref{eq:tensor-ZCB} and then quotienting cycles by boundaries gives \eqref{eq:UCT}.
\end{proof}

Let $P_\bullet\to K$ be an $R$-projective resolution and put
\[
C_\bullet=D\otimes_RP_\bullet.
\]
Set
\[
T_i=\Tor^R_i(K,K).
\]

\begin{lemma}\label{lem:C-homology}
With the notation above,
\[
H_0(C_\bullet)\cong K,
\qquad
H_1(C_\bullet)=0,
\]
and for every $q\geq 2$,
\[
H_q(C_\bullet)\cong T_{q-1}.
\]
Moreover,
\[
H_n(C_\bullet\otimes_DK)\cong T_n
\qquad(n\geq 0).
\]
\end{lemma}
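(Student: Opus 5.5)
The homology of $C_\bullet=D\otimes_RP_\bullet$ is by definition $\Tor^R_q(D,K)$, so the first three assertions amount to computing $\Tor^R_\ast(D,K)$. I will use the short exact sequence $0\to N\to R\to D\to 0$. The long exact sequence gives $\Tor^R_q(D,K)\cong\Tor^R_{q-1}(N,K)$ for $q\ge 2$, together with an exact sequence
\[
0\to\Tor^R_1(D,K)\to N\otimes_RK\to K\to D\otimes_RK\to 0 .
\]
The map $N\otimes_RK\to K$ is multiplication, and its image is $NK=0$ by Proposition \ref{prop:K-identities}. Hence $D\otimes_RK\cong K$, which is consistent with $K$ already being a $D$-module, and $\Tor^R_1(D,K)\cong N\otimes_RK$.

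It therefore remains to replace $N$ by $K$ in these Tor groups. For this I use $0\to K\to N\to L\to 0$ with $L=N/K$. Lemma \ref{lem:Tor-LK-zero} gives $\Tor^R_i(L,K)=0$ for all $i\ge0$, so the long exact sequence yields $\Tor^R_i(K,K)\cong\Tor^R_i(N,K)$ for every $i\ge0$. In degree zero this reads $N\otimes_RK\cong K\otimes_RK=T_0$. By the same reasoning as in Lemma \ref{lem:div-tors-tensor}, this tensor product vanishes: $K$ is divisible and torsion over $D$, and since $NK=0$ we have $K\otimes_RK=K\otimes_DK$. Explicitly, for $x\otimes y$ choose $s\in S$ with $sy=0$ and write $x=sx_0$; then $x\otimes y=x_0\otimes sy=0$. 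Thus $H_1(C_\bullet)=0$. For $q\ge2$ we get $H_q(C_\bullet)\cong\Tor^R_{q-1}(N,K)\cong T_{q-1}$, and $H_0(C_\bullet)\cong K$ was obtained above.

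For the last assertion I use the natural isomorphism of complexes $C_\bullet\otimes_DK=(D\otimes_RP_\bullet)\otimes_DK\cong P_\bullet\otimes_RK$. Its homology is $\Tor^R_n(K,K)=T_n$ by balancing of Tor, computed via a projective resolution of the first variable.

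The only step needing care is checking that the connecting maps and the identification $N\otimes_RK\to K$ really are multiplication. That identification is what makes $NK=0$ kill the map. Everything else is formal from Lemma \ref{lem:Tor-LK-zero}.
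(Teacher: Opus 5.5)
Your proposal is correct and follows essentially the same route as the paper: the long exact sequences for $0\to N\to R\to D\to 0$ and $0\to K\to N\to L\to 0$, Lemma \ref{lem:Tor-LK-zero}, the vanishing $K\otimes_RK=0$ from divisibility and torsion, and the associativity isomorphism $C_\bullet\otimes_DK\cong P_\bullet\otimes_RK$. The only difference is the order. The paper first shows $N\otimes_RK=0$, which makes your observation that $N\otimes_RK\to K$ is multiplication with image $NK=0$ unnecessary, so the difference is immaterial.
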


\begin{proof}
By Proposition \ref{prop:K-identities}, $K$ is a torsion divisible $D$-module and $NK=0$.  Thus
\[
K\otimes_RK\cong K\otimes_DK=0
\]
by Lemma \ref{lem:div-tors-tensor}.

Consider
\[
0\longrightarrow K\longrightarrow N\longrightarrow L=N/K\longrightarrow 0.
\]
Lemma \ref{lem:Tor-LK-zero} and the preceding tensor vanishing give
\[
N\otimes_RK=0
\]
and, for every $i\geq 1$,
\[
\Tor^R_i(N,K)\cong \Tor^R_i(K,K)=T_i.
\]
Applying $-\otimes_RK$ to
\[
0\longrightarrow N\longrightarrow R\longrightarrow D\longrightarrow 0
\]
therefore gives
\[
\Tor^R_1(D,K)=0
\]
and
\[
\Tor^R_q(D,K)\cong T_{q-1}
\qquad(q\geq 2).
\]
Since
\[
H_q(C_\bullet)=\Tor^R_q(D,K),
\]
and $D\otimes_RK\cong K$, the first assertions follow.

Finally, associativity of tensor products gives a natural isomorphism of complexes
\[
C_\bullet\otimes_DK
\cong
P_\bullet\otimes_RK.
\]
Therefore
\[
H_n(C_\bullet\otimes_DK)
\cong
\Tor^R_n(K,K)=T_n.
\]
\end{proof}

\begin{theorem}\label{thm:self-Tor}
Let $(R,\mathfrak m)$ be a local non-strong $\phi$-Pr\"ufer ring.  Then
\[
\Tor^R_{2n}(K,K)=0
\qquad(n\geq 1),
\]
whereas
\[
\Tor^R_{2n+1}(K,K)\neq 0
\qquad(n\geq 0).
\]
Every odd self-Tor group is a nonzero torsion divisible $D$-module.  In particular,
\[
\fd_RK=\infty.
\]
\end{theorem}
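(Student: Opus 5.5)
The plan is to combine Lemma \ref{lem:C-homology} with the universal coefficient sequence of Lemma \ref{lem:UCT}, applied over the valuation domain $D$. Since $\wgd(D)\leq 1$ and $C_\bullet=D\otimes_RP_\bullet$ consists of projective (hence flat) $D$-modules, Lemma \ref{lem:UCT} with $M=K$ gives, for every $n$, a short exact sequence
\[
0\to H_n(C_\bullet)\otimes_DK\to T_n\to \Tor^D_1(H_{n-1}(C_\bullet),K)\to 0 ,
\]
where we use $H_n(C_\bullet\otimes_DK)\cong T_n$. We then argue by induction on $n$ that $T_n=0$ for even $n\geq 2$, and that for odd $n$ one has $T_n\cong \Tor^D_1(\cdot,K)$ applied to a nonzero torsion divisible module, so $T_n$ is itself nonzero torsion divisible.

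The first steps are the base cases. For $n=0$ we have $T_0=K\otimes_RK=0$, as noted in the proof of Lemma \ref{lem:C-homology}. For $n=1$, the left term is $H_1(C_\bullet)\otimes_DK=0$ and $H_0(C_\bullet)\cong K$, so
\[
T_1\cong\Tor^D_1(K,K).
\]
By Theorem \ref{thm:Tor1-divisible}, $T_1$ is nonzero and torsion divisible. For $n=2$, the left term is $H_2\otimes_DK\cong T_1\otimes_DK=0$ by Lemma \ref{lem:div-tors-tensor}, since $T_1$ is divisible and $K$ is torsion. The right term is $\Tor^D_1(H_1,K)=0$, so $T_2=0$.

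For the inductive step, take $q\geq 2$. Then $H_q(C_\bullet)\cong T_{q-1}$ by Lemma \ref{lem:C-homology}. Whenever $T_{q-1}$ is torsion divisible (this includes the case $T_{q-1}=0$), the left term $H_q\otimes_DK$ vanishes by Lemma \ref{lem:div-tors-tensor}. Hence for $n\geq 2$,
\[
T_n\cong \Tor^D_1(T_{n-2},K).
\]
If $n$ is even, $T_{n-2}=0$ by induction, so $T_n=0$. If $n$ is odd, $T_{n-2}$ is nonzero torsion divisible by induction, so Theorem \ref{thm:Tor1-divisible} shows $T_n$ is nonzero torsion divisible. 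This also recovers the pattern $T_{2n+1}\cong U_{n+1}$ in the notation of Corollary \ref{cor:iterated-Tor} with $U=K$, using symmetry of Tor. Finally, $T_{2n+1}\neq 0$ for all $n$ forces $\fd_RK=\infty$.

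The main obstacle is bookkeeping rather than new mathematics. One must check that $D\otimes_RP_\bullet$ is a complex of flat $D$-modules; it is, since each $P_i$ is $R$-projective. One must also make sure the identification $H_q(C_\bullet)\cong T_{q-1}$ is valid exactly for $q\geq 2$, with $H_1=0$ and $H_0=K$ handled separately as above. The divisibility needed to kill each left-hand term comes for free from the torsion divisible conclusion carried along in the induction. So the essential input is the torsion-divisibility half of Theorem \ref{thm:Tor1-divisible}, not just its nonvanishing half.
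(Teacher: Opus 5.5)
Your proposal is correct and follows essentially the same route as the paper. You apply the universal coefficient sequence over $D$ to $C_\bullet=D\otimes_RP_\bullet$ and combine it with Lemma \ref{lem:C-homology} to get $T_n\cong\Tor^D_1(T_{n-2},K)$, after killing the left-hand term by Lemma \ref{lem:div-tors-tensor}; induction with Theorem \ref{thm:Tor1-divisible} then gives the alternating pattern. The only cosmetic difference is in that tensor-vanishing step: you use divisibility of $T_{q-1}$ and torsion of $K$, whereas the paper uses torsion of $T_{q-1}$ and divisibility of $K$, and both work because every module involved is torsion divisible.
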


\begin{proof}
Let $P_\bullet\to K$ and $C_\bullet=D\otimes_RP_\bullet$ be as above.  Since $D$ is a valuation domain,
\[
\wgd(D)\leq 1,
\]
and each $C_i$ is a projective, hence flat, $D$-module.  Apply Lemma \ref{lem:UCT} with $A=D$ and $M=K$.  By Lemma \ref{lem:C-homology}, for every $n\geq 1$ there is an exact sequence
\begin{equation}\label{eq:recurrence}
0\longrightarrow H_n(C_\bullet)\otimes_DK
\longrightarrow T_n
\longrightarrow
\Tor^D_1(H_{n-1}(C_\bullet),K)
\longrightarrow 0.
\end{equation}

For $n=1$, Lemma \ref{lem:C-homology} gives
\[
H_1(C_\bullet)=0,
\qquad
H_0(C_\bullet)=K.
\]
Hence
\[
T_1\cong\Tor^D_1(K,K),
\]
which is nonzero torsion divisible by Theorem \ref{thm:Tor1-divisible}.

For $n=2$,
\[
H_2(C_\bullet)\cong T_1,
\qquad
H_1(C_\bullet)=0.
\]
Since $T_1$ is torsion and $K$ is divisible, Lemma \ref{lem:div-tors-tensor} gives
\[
T_1\otimes_DK=0.
\]
Thus \eqref{eq:recurrence} yields $T_2=0$.

Assume inductively that for some $m\geq 1$,
\[
T_{2m}=0
\]
and $T_{2m-1}$ is nonzero torsion divisible.  For $n=2m+1$,
\[
H_{2m+1}(C_\bullet)\cong T_{2m}=0,
\qquad
H_{2m}(C_\bullet)\cong T_{2m-1}.
\]
Hence
\[
T_{2m+1}
\cong
\Tor^D_1(T_{2m-1},K),
\]
which is again nonzero torsion divisible by Theorem \ref{thm:Tor1-divisible}.

For $n=2m+2$,
\[
H_{2m+2}(C_\bullet)\cong T_{2m+1},
\qquad
H_{2m+1}(C_\bullet)\cong T_{2m}=0.
\]
The first term in \eqref{eq:recurrence} vanishes by Lemma \ref{lem:div-tors-tensor}, because $T_{2m+1}$ is torsion and $K$ is divisible, while the last term is zero.  Thus $T_{2m+2}=0$.  Induction proves the asserted pattern.

Since nonzero Tor groups occur in arbitrarily large degrees, $\fd_RK=\infty$.
\end{proof}

\section{The trichotomy theorem}

For $a\notin N$, put
\[
A_a=(0:_Ra).
\]

\begin{lemma}\label{lem:K-filtered}
One has
\[
K=\bigcup_{a\notin N}A_a,
\]
and this union is filtered.  Moreover, if
\[
\fd_R(R/aR)\leq d<\infty
\]
with $d\geq 2$, then
\[
\fd_RA_a\leq d-2.
\]
\end{lemma}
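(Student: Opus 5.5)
The plan is to prove the two assertions separately. The first comes directly from the definition \eqref{eq:Kdef} of $K$. The second is a two-step dimension shift along the two short exact sequences attached to multiplication by $a$.

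For the union, take $x\in K$. By \eqref{eq:Kdef} there is $y\in Z(R)\setminus N$ with $xy=0$, so $x\in A_y$ with $y\notin N$. Conversely, let $x\in A_a$ with $a\notin N$. If $x=0$ there is nothing to show. If $x\neq 0$, then $a$ kills a nonzero element, so $a$ is a zero-divisor and $a\in Z(R)\setminus N$; by \eqref{eq:Kdef} this gives $x\in K$. For filteredness, given $a,b\notin N$ we have $ab\notin N$ because $N$ is prime, and clearly $A_a\cup A_b\subseteq A_{ab}$. Hence the family $\{A_a\}_{a\notin N}$ is directed under inclusion.

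For the dimension bound, I will use the two exact sequences
\[
0\longrightarrow A_a\longrightarrow R\xrightarrow{\;\cdot a\;} aR\longrightarrow 0,
\qquad
0\longrightarrow aR\longrightarrow R\longrightarrow R/aR\longrightarrow 0.
\]
The key tool is the standard shifting fact. If $0\to X\to F\to Y\to 0$ is exact with $F$ flat and $\fd_RY\leq k$, then $\fd_RX\leq \max(k-1,0)$. This follows from the long exact Tor sequence, which gives $\Tor^R_{i}(X,-)\cong\Tor^R_{i+1}(Y,-)$ for $i\geq 1$, together with $\Tor^R_1(X,-)\hookrightarrow \Tor^R_2(Y,-)$.

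Applying this to the second sequence with $k=d$ gives $\fd_R(aR)\leq d-1$, since $d\geq 1$. Applying it again to the first sequence with $k=d-1\geq 1$, which uses $d\geq 2$, gives $\fd_RA_a\leq d-2$.

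No step is a genuine obstacle. The only point needing care is the hypothesis $d\geq 2$: it makes the second shift honest, so the result is exactly $d-2$ rather than $\max(d-2,0)$. That is precisely the form needed later when $K=\colim A_a$ is compared with $\phiwgd(R)$.
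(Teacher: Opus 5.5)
Your proof is correct and follows the paper's argument. The union and filteredness come straight from the definition of $K$ and the primeness of $N$, and the bound is the same two-step dimension shift along $0\to A_a\to R\to aR\to 0$ and $0\to aR\to R\to R/aR\to 0$, which the paper packages as $\Tor^R_i(A_a,M)\cong\Tor^R_{i+2}(R/aR,M)$ for $i\geq 1$. (Your extra clause about an injection $\Tor^R_1(X,-)\hookrightarrow\Tor^R_2(Y,-)$ is redundant, since the isomorphism already holds for $i=1$.)
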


\begin{proof}
The equality follows directly from \eqref{eq:Kdef}.  If $a,b\notin N$, then $ab\notin N$ because $N$ is prime, and
\[
A_a+A_b\subseteq A_{ab}.
\]
Hence the union is filtered.

The exact sequences
\[
0\longrightarrow A_a\longrightarrow R\xrightarrow{\,a\,}aR\longrightarrow 0
\]
and
\[
0\longrightarrow aR\longrightarrow R\longrightarrow R/aR\longrightarrow 0
\]
give, for every $R$-module $M$ and every $i\geq 1$,
\[
\Tor^R_i(A_a,M)
\cong
\Tor^R_{i+2}(R/aR,M).
\]
Thus $\fd_RA_a\leq d-2$ whenever $\fd_R(R/aR)\leq d$ and $d\geq 2$.
\end{proof}

\begin{theorem}\label{thm:local-infinite}
Let $(R,\mathfrak m)$ be a local non-strong $\phi$-Pr\"ufer ring.  Then
\[
\phiwgd(R)=\infty.
\]
\end{theorem}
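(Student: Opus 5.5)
We argue by contradiction: suppose $\phiwgd(R)=d<\infty$. By Corollary \ref{cor:principal-reduction}, $\fd_R(R/aR)\leq d$ for every $a\notin N$. Replacing $d$ by $\max(d,2)$, we may assume $d\geq 2$. Lemma \ref{lem:K-filtered} then gives $\fd_R A_a\leq d-2$ for every $a\notin N$.

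Next we pass to the union. By Lemma \ref{lem:K-filtered}, $K=\bigcup_{a\notin N}A_a$ is a filtered union, so $K\cong\colim A_a$ is a filtered colimit. Tor commutes with filtered colimits in each variable. Hence for every $R$-module $M$ and every $i>d-2$,
\[
\Tor^R_i(K,M)\cong\colim_a\Tor^R_i(A_a,M)=0 .
\]
Therefore $\fd_RK\leq d-2<\infty$.

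Finally we reach a contradiction. Since $R$ is non-strong, $K\neq 0$ by \eqref{eq:Kstrong}. Theorem \ref{thm:self-Tor} then gives $\fd_RK=\infty$, because $\Tor^R_{2n+1}(K,K)\neq 0$ for all $n\geq 0$. This contradicts the bound just obtained, so $\phiwgd(R)=\infty$.

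The steps needing care are the following.
\begin{itemize}
\item Corollary \ref{cor:principal-reduction} must apply. It does, since local $\phi$-Pr\"ufer rings are $\phi$-chain rings by Theorem \ref{thm:phi-prufer-char}. We also use the NP-ring hypothesis implicit in Theorem \ref{thm:ZXQ-weak}, which the paper applies to $\phi$-rings throughout.
\item The directedness of the system $\{A_a\}$ must be correct. If $A_a\subseteq A_{ab}$, then the inclusions are compatible, since they are all inclusions into $K$.
\end{itemize}
The only genuinely nontrivial input is Theorem \ref{thm:self-Tor}, which is already established.
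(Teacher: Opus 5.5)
Your proof is correct and follows the paper's argument step by step: principal reduction via Corollary~\ref{cor:principal-reduction}, the bound $\fd_R A_a\leq d-2$ from Lemma~\ref{lem:K-filtered}, passage to the filtered union $K=\bigcup_{a\notin N}A_a$, and the contradiction with $\fd_RK=\infty$ from Theorem~\ref{thm:self-Tor}. The only difference is how you get $d\geq 2$: you replace $d$ by $\max(d,2)$, whereas the paper uses Theorem~\ref{thm:small-weak} to show that non-strongness already forces $d\geq 2$; your shortcut is valid because the contradiction only needs some finite bound on $\fd_RK$.
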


\begin{proof}
Assume $\phiwgd(R)=d<\infty$.  By Theorem \ref{thm:small-weak}, the non-strong hypothesis forces $d\geq 2$.  By Corollary \ref{cor:principal-reduction},
\[
\fd_R(R/aR)\leq d
\qquad(a\notin N).
\]
Hence Lemma \ref{lem:K-filtered} gives
\[
\fd_RA_a\leq d-2
\qquad(a\notin N).
\]
Since $K$ is the filtered colimit of the $A_a$ and Tor commutes with filtered colimits in either variable over a commutative ring, for every $R$-module $M$ one gets
\[
\Tor^R_i(K,M)
\cong
\colim_{a\notin N}\Tor^R_i(A_a,M)=0
\qquad(i>d-2).
\]
Therefore $\fd_RK\leq d-2$, contradicting Theorem \ref{thm:self-Tor}.  Hence $\phiwgd(R)=\infty$.
\end{proof}

There is also a useful low-degree obstruction valid without the Pr\"ufer hypothesis.

\begin{proposition}\label{prop:low-degree}
Let $R$ be a non-strong $\phi$-ring.  Then there exists $a\in Z(R)\setminus N$ such that, with
\[
A=(0:_Ra),
\qquad
C=R/aR,
\]
one has
\[
\Tor^R_2(C,C)\cong A,
\qquad
\Tor^R_3(C,C)\cong A.
\]
In particular,
\[
\phiwgd(R)\geq 3.
\]
\end{proposition}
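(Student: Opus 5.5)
Choose $a\in Z(R)\setminus N$, which exists because $R$ is non-strong, and put $A=(0:_Ra)$ and $C=R/aR$. The plan is to compute $\Tor^R_i(C,C)$ from the standard periodic-looking start of a free resolution of $C$. The exact sequences of Lemma \ref{lem:K-filtered},
\[
0\to A\to R\xrightarrow{a}aR\to 0,\qquad 0\to aR\to R\to C\to 0,
\]
give dimension shifting:
\[
\Tor^R_2(C,M)\cong\Tor^R_1(aR,M)\cong \ker(A\otimes_R M\to M)\quad\text{(after one more step)},
\]
and $\Tor^R_{i+2}(C,M)\cong\Tor^R_i(A,M)$ for $i\ge1$. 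Concretely, $\Tor^R_2(C,M)\cong\ker(A\otimes_RM\to R\otimes_R M)$, and $\Tor^R_3(C,M)\cong\Tor^R_1(A,M)$. Take $M=C$.

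\emph{Key facts about $A$.} Since $a\notin N$ and $Aa=0$, every element of $A$ is killed by the element $a\in Z(R)\setminus N$, so $A\subseteq K$. By Proposition \ref{prop:K-identities} we have $NA=0$. We also have $aA=0$, hence $A\subseteq N\subseteq aR$. Then $A\otimes_R C\cong A/aA=A$, and the map $A\otimes_R C\to C=R/aR$ induced by inclusion is zero because $A\subseteq aR$. Hence $\Tor^R_2(C,C)\cong A$.

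\emph{Degree three.} Here $\Tor^R_1(A,C)$ is computed from $0\to aR\to R\to C\to 0$ tensored with $A$:
\[
0\to\Tor_1^R(A,C)\to A\otimes_R aR\to A\to A/aA\to 0 .
\]
The map $A\to A/aA=A$ is an isomorphism, so $\Tor^R_1(A,C)\cong A\otimes_R aR$. Now $aR\cong R/A$, so $A\otimes_R aR\cong A/A^2=A$, using $A^2\subseteq K^2=0$. Thus $\Tor^R_3(C,C)\cong A$. The only care needed is that the maps in the long exact sequence are as claimed; in particular $\Tor^R_1(A,R)=0$ is what starts the sequence at $\Tor^R_1(A,C)$.

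\emph{Consequence.} $A\neq0$ because $a$ is a zero-divisor. Hence $\Tor^R_3(C,C)\ne0$, so $\fd_R(C)\ge3$. Since $aR$ is a nonnil ideal, Theorem \ref{thm:ZXQ-weak} gives $\phiwgd(R)\ge3$. That theorem is stated for NP-rings; a $\phi$-ring is NP since its nilradical is prime. The main (mild) obstacle is bookkeeping the identifications $A\otimes C\cong A$ and $aR\otimes A\cong A$, both of which rest on $aA=0$ and $A^2=0$.
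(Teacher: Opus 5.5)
Your proposal is correct and follows essentially the same route as the paper: you dimension-shift through $0\to A\to R\to aR\to 0$ and $0\to aR\to R\to C\to 0$, identify $A\otimes_R C\cong A/aA=A$ with zero map to $C$ (because $A\subseteq N\subseteq aR$), and obtain $\Tor^R_1(A,C)\cong A\otimes_R aR\cong A/A^2=A$ (because the multiplication map into $A$ has image $aA=0$ and $A^2\subseteq K^2=0$). The only cosmetic difference is that you tensor with $A$ on the left where the paper computes $\Tor^R_1(C,A)$, and your remark that a $\phi$-ring is an NP-ring correctly justifies invoking Theorem~\ref{thm:ZXQ-weak}.
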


\begin{proof}
Choose $a\in Z(R)\setminus N$ and put $A=(0:a)$.  Then $A\neq 0$.  Since $A\subseteq K$, Proposition \ref{prop:K-identities} gives
\[
A^2=0.
\]
Also, dividedness gives
\[
A\subseteq N\subseteq aR,
\]
and by definition $aA=0$.

From
\[
0\longrightarrow aR\longrightarrow R\longrightarrow C\longrightarrow 0
\]
one gets
\[
\Tor^R_2(C,C)\cong\Tor^R_1(aR,C).
\]
Tensoring
\[
0\longrightarrow A\longrightarrow R\longrightarrow aR\longrightarrow 0
\]
with $C$ yields
\[
\Tor^R_1(aR,C)
\cong
\ker(A\otimes_RC\longrightarrow C).
\]
Now
\[
A\otimes_RC\cong A/aA\cong A,
\]
and the map to $C$ is zero because $A\subseteq aR$.  Hence
\[
\Tor^R_2(C,C)\cong A.
\]

Similarly, dimension shifting gives
\[
\Tor^R_3(C,C)
\cong
\Tor^R_1(A,C)
\cong
\Tor^R_1(C,A).
\]
Since $aR\cong R/A$,
\[
aR\otimes_RA\cong A/A^2\cong A.
\]
The multiplication map $aR\otimes_RA\to A$ is zero because $aA=0$.  The long exact Tor sequence associated to
\[
0\longrightarrow aR\longrightarrow R\longrightarrow C\longrightarrow 0
\]
therefore yields
\[
\Tor^R_1(C,A)\cong A.
\]
Thus $\Tor^R_3(C,C)\cong A\neq 0$.

The ideal $aR$ is finitely generated and nonnil.  Formula \eqref{eq:phi-w-sup} gives $\phiwgd(R)\geq 3$.
\end{proof}

\begin{corollary}\label{cor:no-two}
No $\phi$-Pr\"ufer ring has $\phi$-weak global dimension equal to $2$.
\end{corollary}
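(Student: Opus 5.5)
Let $R$ be a $\phi$-Pr\"ufer ring and suppose, for a contradiction, that $\phiwgd(R)=2$. The plan is to split into two cases according to whether $R$ is strong, and to rule out each case with a result already proved above.

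In the first case, $R$ is strong, so $Z(R)=N$. Then $R$ is a strong $\phi$-Pr\"ufer ring, and Theorem \ref{thm:small-weak}(2) gives $\phiwgd(R)\leq 1$. This contradicts $\phiwgd(R)=2$.

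In the second case, $R$ is not strong. Proposition \ref{prop:low-degree} applies to every non-strong $\phi$-ring, with no Pr\"ufer hypothesis. It produces $a\in Z(R)\setminus N$ such that $\Tor^R_3(R/aR,R/aR)\cong(0:_Ra)\neq 0$, and hence $\phiwgd(R)\geq 3$. This again contradicts $\phiwgd(R)=2$.

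Alternatively, the second case follows from Theorem \ref{thm:local-infinite} via localization. Since $R$ is non-strong, there is some $a\in Z(R)\setminus N$, say with $ab=0$ and $b\neq 0$. Choose a maximal ideal $\mathfrak m$ containing $\Ann(b)$; this forces $b/1\neq 0$ in $R_{\mathfrak m}$. Then $a/1$ is a zero-divisor in $R_{\mathfrak m}$, and it is nonnilpotent by Lemma \ref{lem:local-nonnil}. So $R_{\mathfrak m}$ is a local non-strong $\phi$-Pr\"ufer ring. Theorem \ref{thm:local-infinite} and Proposition \ref{prop:localization-phi-w} then give $\phiwgd(R)=\infty$. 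This route needs the localization bookkeeping just described. The direct route through Proposition \ref{prop:low-degree} avoids it, so I would use that one.

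There is no real obstacle: the corollary is a direct combination of Theorem \ref{thm:small-weak} and Proposition \ref{prop:low-degree}. The only point to check is that the dichotomy strong/non-strong is exhaustive, which is immediate.
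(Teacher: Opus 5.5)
Your proposal is correct and matches the paper's proof: the strong case is handled by Theorem~\ref{thm:small-weak}(2), which gives $\phiwgd(R)\leq 1$, and the non-strong case by Proposition~\ref{prop:low-degree}, which gives $\phiwgd(R)\geq 3$. Your alternative localization route is essentially the paper's Theorem~\ref{thm:global-infinite} and is not needed here; to apply Lemma~\ref{lem:local-nonnil} in that route you should note that $a\in\mathfrak m$, which holds because $a/1$ kills $b/1\neq 0$ and so cannot be a unit.
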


\begin{proof}
If $R$ is strong, Theorem \ref{thm:small-weak} gives $\phiwgd(R)\leq 1$.  If $R$ is non-strong, Proposition \ref{prop:low-degree} gives $\phiwgd(R)\geq 3$.
\end{proof}

\begin{lemma}\label{lem:localize-nonstrong}
If $R$ is a non-strong $\phi$-ring, then $R_{\mathfrak m}$ is non-strong for some $\mathfrak m\in\Max(R)$.
\end{lemma}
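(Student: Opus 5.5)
The plan is to pick a witness of non-strongness and localize at a maximal ideal chosen so that the witness survives. Since $R$ is non-strong, \eqref{eq:Kstrong} gives $K\neq 0$. So choose $a\in Z(R)\setminus N$ and a nonzero $x\in R$ with $ax=0$. By Proposition \ref{prop:K-identities} we have $x\in K\subseteq N$.

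The annihilator $\Ann_R(x)$ is a proper ideal, because $x\neq 0$. Take $\mathfrak m\in\Max(R)$ containing $\Ann_R(x)$. Then $x/1\neq 0$ in $R_{\mathfrak m}$: if $sx=0$ for some $s\notin\mathfrak m$, then $s\in\Ann_R(x)\subseteq\mathfrak m$, which is impossible.

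Next I check that $a\in\mathfrak m$. Since $a$ kills $x$, we have $a\in\Ann_R(x)\subseteq\mathfrak m$. Lemma \ref{lem:local-nonnil} then shows that $a/1$ is not nilpotent in $R_{\mathfrak m}$, i.e. $a/1\notin\Nil(R_{\mathfrak m})=N_{\mathfrak m}$.

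Finally, in $R_{\mathfrak m}$ we have $(a/1)(x/1)=0$ with $x/1\neq 0$. Thus $a/1\in Z(R_{\mathfrak m})\setminus\Nil(R_{\mathfrak m})$, and $R_{\mathfrak m}$ is non-strong. (Localization of a $\phi$-ring at a prime is again a $\phi$-ring, which is standard and is used implicitly in Proposition \ref{prop:localization-phi-w}; in any case only the zero-divisor statement is needed here.)

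The only delicate point is the choice of $\mathfrak m$: it must contain $\Ann(x)$ rather than be arbitrary, so that $x$ stays nonzero and $a$ stays nonnilpotent simultaneously. Both conditions hold because $a\in\Ann(x)\subseteq\mathfrak m$.
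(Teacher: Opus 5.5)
Your proof is correct and follows the paper's argument essentially verbatim. You localize at a maximal ideal containing $\Ann_R(x)$ so that $x/1\neq 0$, place $a$ in $\mathfrak m$, and invoke Lemma \ref{lem:local-nonnil} to see that $a/1$ is a nonnilpotent zero-divisor. Your direct observation $a\in\Ann_R(x)\subseteq\mathfrak m$ is a slightly cleaner route to $a\in\mathfrak m$ than the paper's argument via units.
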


\begin{proof}
By \eqref{eq:Kstrong}, $K\neq 0$.  Choose $0\neq x\in K$.  Pick a maximal ideal $\mathfrak m$ containing $\Ann_R(x)$; then $x/1\neq 0$ in $R_{\mathfrak m}$.  There exists $a\in Z(R)\setminus N$ with $ax=0$.  Necessarily $a\in\mathfrak m$, for otherwise $a/1$ would be a unit and would force $x/1=0$.  By Lemma \ref{lem:local-nonnil}, $a/1$ is nonnilpotent.  Since
\[
(a/1)(x/1)=0
\]
with $x/1\neq 0$, the element $a/1$ is a zero-divisor outside $\Nil(R_{\mathfrak m})=N_{\mathfrak m}$.  Thus $R_{\mathfrak m}$ is non-strong.
\end{proof}

\begin{theorem}\label{thm:global-infinite}
Let $R$ be a non-strong $\phi$-Pr\"ufer ring.  Then
\[
\phiwgd(R)=\infty.
\]
\end{theorem}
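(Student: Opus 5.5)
The plan is to reduce to the local case already settled in Theorem \ref{thm:local-infinite}. Let $R$ be a non-strong $\phi$-Pr\"ufer ring. By Lemma \ref{lem:localize-nonstrong}, there is a maximal ideal $\mathfrak m$ such that $R_{\mathfrak m}$ is non-strong. To apply the local theorem, we must check that $R_{\mathfrak m}$ is a local $\phi$-Pr\"ufer ring. By Theorem \ref{thm:phi-prufer-char}, $R_{\mathfrak m}$ is a $\phi$-chain ring. In particular it is a $\phi$-ring, since a $\phi$-chain ring is by definition a $\phi$-ring, and localizations of $\phi$-rings are $\phi$-rings. Its reduced quotient
\[
R_{\mathfrak m}/\Nil(R_{\mathfrak m})\cong (R/N)_{\mathfrak m}
\]
is a localization of a Pr\"ufer domain, hence a valuation domain. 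Therefore $R_{\mathfrak m}$ is local $\phi$-Pr\"ufer by Theorem \ref{thm:phi-prufer-char}(3).

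Theorem \ref{thm:local-infinite} then gives $\phiwgd(R_{\mathfrak m})=\infty$. Proposition \ref{prop:localization-phi-w} states
\[
\phiwgd(R)=\sup_{\mathfrak n\in\Max(R)}\phiwgd(R_{\mathfrak n}),
\]
so this supremum is at least $\phiwgd(R_{\mathfrak m})=\infty$. Hence $\phiwgd(R)=\infty$.

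The only step needing care is verifying that $R_{\mathfrak m}$ satisfies the standing hypotheses of Theorem \ref{thm:local-infinite}: that it is a $\phi$-ring, that it is $\phi$-Pr\"ufer, and that it is non-strong. Non-strongness is exactly Lemma \ref{lem:localize-nonstrong}. The $\phi$-ring property holds because the nilradical localizes to $N_{\mathfrak m}$, and dividedness passes to localizations: if $x/s\notin N_{\mathfrak m}$ then $x\notin N$, so $N\subseteq xR$ and thus $N_{\mathfrak m}\subseteq (x/s)R_{\mathfrak m}$. The $\phi$-Pr\"ufer property follows from the identification of the reduced quotient with $(R/N)_{\mathfrak m}$ given above.

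Everything else is a direct citation of earlier results.
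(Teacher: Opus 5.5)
Your proposal is correct and is the paper's own argument: pick $\mathfrak m$ via Lemma \ref{lem:localize-nonstrong}, apply Theorem \ref{thm:local-infinite} to $R_{\mathfrak m}$, and conclude with Proposition \ref{prop:localization-phi-w}. Your check that $R_{\mathfrak m}$ is a local $\phi$-Pr\"ufer $\phi$-ring is sound, and it simply spells out the one-line step \emph{since $R$ is $\phi$-Pr\"ufer, so is $R_{\mathfrak m}$} that the paper leaves implicit.
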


\begin{proof}
Choose $\mathfrak m$ as in Lemma \ref{lem:localize-nonstrong}.  Since $R$ is $\phi$-Pr\"ufer, so is $R_{\mathfrak m}$.  Theorem \ref{thm:local-infinite} yields
\[
\phiwgd(R_{\mathfrak m})=\infty.
\]
Proposition \ref{prop:localization-phi-w} now gives $\phiwgd(R)=\infty$.
\end{proof}

We can now prove the first main theorem.

\begin{theorem}\label{thm:trichotomy-final}
Let $R$ be a $\phi$-Pr\"ufer ring and put $N=\Nil(R)$.  Then
\[
\phiwgd(R)=
\begin{cases}
0,& R/N\text{ is a field},\\
1,& Z(R)=N\text{ and }R/N\text{ is not a field},\\
\infty,& Z(R)\neq N.
\end{cases}
\]
\end{theorem}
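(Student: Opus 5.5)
The plan is to assemble the trichotomy from results already established, splitting into three cases. First, note that the three cases are exhaustive and consistent. If $R/N$ is a field, then $N$ is the unique prime ideal, hence the unique maximal ideal. Every zero-divisor is a nonunit, so $Z(R)\subseteq N$. Since nilpotents are zero-divisors (or $R$ is zero, excluded), $Z(R)=N$. Thus a field quotient forces strongness, and the first case sits inside $Z(R)=N$ without contradicting the third case.

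\emph{Case $R/N$ a field.} Here Theorem \ref{thm:small-weak}(1) gives $\phiwgd(R)=0$ directly.

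\emph{Case $Z(R)=N$ and $R/N$ not a field.} Then $R$ is a strong $\phi$-Pr\"ufer ring, so Theorem \ref{thm:small-weak}(2) gives $\phiwgd(R)\leq 1$. Since $R/N$ is not a field, Theorem \ref{thm:small-weak}(1) excludes the value $0$. Hence $\phiwgd(R)=1$. Note that strongness is exactly the condition $Z(R)=N$ from the definition, so no translation is required.

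\emph{Case $Z(R)\neq N$.} Then $R$ is a non-strong $\phi$-Pr\"ufer ring, and Theorem \ref{thm:global-infinite} gives $\phiwgd(R)=\infty$. That theorem rests on several earlier steps:
\begin{enumerate}[label=\textup{(\roman*)}]
\item localization to a non-strong local ring, via Lemma \ref{lem:localize-nonstrong} and Proposition \ref{prop:localization-phi-w};
\item the alternating self-Tor computation of Theorem \ref{thm:self-Tor}, which gives $\fd_RK=\infty$;
\item the filtered-colimit transfer of Theorem \ref{thm:local-infinite}.
\end{enumerate}
All of these are already proved, so nothing new is needed.

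The only point needing care is the case bookkeeping: checking that every $\phi$-Pr\"ufer ring falls into exactly one case. By the remark at the start, $R/N$ a field implies $Z(R)=N$, so the three conditions partition all $\phi$-Pr\"ufer rings. The final assertion $\phiwgd(R)\in\{0,1,\infty\}$ follows immediately. I expect no real obstacle; the substantive difficulty was absorbed by Theorem \ref{thm:self-Tor}.
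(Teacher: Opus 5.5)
Your proposal is correct and matches the paper's proof: Theorem \ref{thm:global-infinite} handles the non-strong case, and Theorem \ref{thm:small-weak} handles the strong case, with part (1) separating the values $0$ and $1$. Your check that $R/N$ being a field forces $Z(R)=N$, so the cases cannot overlap, is a correct addition that the paper leaves implicit.
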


\begin{proof}
If $Z(R)\neq N$, apply Theorem \ref{thm:global-infinite}.  Suppose $Z(R)=N$.  Then $R$ is strong.  Since $R$ is $\phi$-Pr\"ufer, Theorem \ref{thm:small-weak} gives $\phiwgd(R)\leq 1$.  The same theorem gives $\phiwgd(R)=0$ exactly when $R/N$ is a field.  Thus the remaining strong case has $\phiwgd(R)=1$.
\end{proof}

\begin{corollary}\label{cor:finite-iff-strong}
For a $\phi$-Pr\"ufer ring $R$, the following are equivalent:
\[
\phiwgd(R)<\infty,
\qquad
\phiwgd(R)\leq 1,
\qquad
R\text{ is strong}.
\]
Moreover, every non-strong $\phi$-Pr\"ufer ring has
\[
\wgd(R)=\infty.
\]
\end{corollary}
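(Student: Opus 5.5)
The equivalences follow directly from Theorem \ref{thm:trichotomy-final}, and the final assertion follows from comparing $\phiwgd$ with $\wgd$.

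\emph{Equivalences.} The implication from $\phiwgd(R)\leq 1$ to $\phiwgd(R)<\infty$ is trivial. If $R$ is strong, then $Z(R)=N$. In that case Theorem \ref{thm:trichotomy-final} gives $\phiwgd(R)\in\{0,1\}$; alternatively, Theorem \ref{thm:small-weak}(2) applies directly, since $R$ is strong and $\phi$-Pr\"ufer. Conversely, suppose $\phiwgd(R)<\infty$. If $R$ were non-strong, Theorem \ref{thm:global-infinite} would give $\phiwgd(R)=\infty$, a contradiction. Hence $R$ is strong, and the cycle of implications closes.

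\emph{Infinite weak global dimension.} Let $R$ be a non-strong $\phi$-Pr\"ufer ring. Theorem \ref{thm:ZXQ-weak} records the inequality $\phiwgd(R)\leq\wgd(R)$, and Theorem \ref{thm:global-infinite} gives $\phiwgd(R)=\infty$, so $\wgd(R)=\infty$. Theorem \ref{thm:ZXQ-weak} is stated for NP-rings. A $\phi$-ring with nonzero nilradical is NP, and a non-strong $\phi$-ring has $K\neq0$ with $K\subseteq N$, so $N\neq 0$ and the hypothesis holds. To avoid depending on that hypothesis at all, one can argue directly. Pass to a maximal ideal $\mathfrak m$ with $R_{\mathfrak m}$ non-strong (Lemma \ref{lem:localize-nonstrong}). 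Theorem \ref{thm:self-Tor} gives $\fd_{R_{\mathfrak m}}K_{\mathfrak m}=\infty$, so $\wgd(R_{\mathfrak m})=\infty$. Since $\wgd(R)\geq\wgd(R_{\mathfrak m})$ by localization of Tor, $\wgd(R)=\infty$.

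No step here is a genuine obstacle; the only point needing care is checking that the comparison $\phiwgd\leq\wgd$ applies, which the direct localized argument sidesteps.
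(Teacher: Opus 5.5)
Your proposal is correct and follows the paper's proof. The equivalences come from Theorems~\ref{thm:trichotomy-final} and~\ref{thm:small-weak}, with Theorem~\ref{thm:global-infinite} for the step finite $\Rightarrow$ strong, and $\wgd(R)=\infty$ follows from $\phiwgd(R)\leq\wgd(R)$ together with Theorem~\ref{thm:global-infinite}. Your localized backup argument via Theorem~\ref{thm:self-Tor} is also valid but unnecessary: an NP-ring in the sense of Zhang--Xing--Qi is just a ring whose nilradical is prime, so every $\phi$-ring is NP, whether or not $N\neq 0$.
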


\begin{proof}
The equivalences follow from Theorem \ref{thm:trichotomy-final} and Theorem \ref{thm:small-weak}.  Since $\phiwgd(R)\leq\wgd(R)$, the last assertion follows from Theorem \ref{thm:global-infinite}.
\end{proof}

\begin{corollary}\label{cor:Kcriterion}
Let $R$ be a $\phi$-Pr\"ufer ring and let $K=\ker(R\to\phi(R))$.  Then
\[
\phiwgd(R)<\infty
\Longleftrightarrow
K=0.
\]
If these conditions hold, then $\phiwgd(R)\leq 1$.
\end{corollary}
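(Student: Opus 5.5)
The plan is to reduce Corollary \ref{cor:Kcriterion} to the trichotomy already established, using the criterion \eqref{eq:Kstrong}. That criterion says $K=0$ holds exactly when $Z(R)=N$, that is, exactly when $R$ is strong. So the corollary amounts to the statement that, for a $\phi$-Pr\"ufer ring, $\phiwgd(R)<\infty$ if and only if $R$ is strong. This is one of the equivalences in Corollary \ref{cor:finite-iff-strong}.

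I would argue the two directions separately.

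\textbf{Case $K=0$.} By \eqref{eq:Kstrong}, $R$ is a strong $\phi$-ring, and it is $\phi$-Pr\"ufer by hypothesis. Theorem \ref{thm:small-weak}(2) then gives $\phiwgd(R)\leq 1$, which in particular is finite. This also proves the final assertion of the corollary.

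\textbf{Case $K\neq 0$.} By \eqref{eq:Kstrong}, $Z(R)\neq N$, so $R$ is a non-strong $\phi$-Pr\"ufer ring. Theorem \ref{thm:global-infinite} gives $\phiwgd(R)=\infty$. Taking the contrapositive, finiteness of $\phiwgd(R)$ forces $K=0$.

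There is essentially no obstacle, since all the homological content sits in Theorem \ref{thm:global-infinite}. That theorem rests on the self-Tor computation of Theorem \ref{thm:self-Tor} and on the filtered-colimit transfer in Theorem \ref{thm:local-infinite}. The only point to check is that \eqref{eq:Kstrong} is invoked for the global ring $R$ and not a localization. This is fine, because \eqref{eq:Kstrong} holds for any $\phi$-ring, and the passage to a localization is already handled inside Theorem \ref{thm:global-infinite} via Lemma \ref{lem:localize-nonstrong}.
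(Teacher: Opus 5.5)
Your proposal is correct and follows the paper's route. The paper's proof also converts $K=0$ into strongness via \eqref{eq:Kstrong} and then cites Corollary \ref{cor:finite-iff-strong}, which rests on Theorem \ref{thm:small-weak}(2) in the strong case and Theorem \ref{thm:global-infinite} in the non-strong case, exactly as you spell out.
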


\begin{proof}
By \eqref{eq:Kstrong}, $K=0$ is equivalent to strongness.  Apply Corollary \ref{cor:finite-iff-strong}.
\end{proof}

\section{The \texorpdfstring{$\phi$}{phi}-global dimension of \texorpdfstring{$\phi$}{phi}-Dedekind rings}

For an $R$-module $M$, let $\phi\text{-}\operatorname{id}_R(M)$ denote its dimension with respect to strongly nonnil-injective modules.  The $\phi$-global dimension is
\[
\phigd(R)
=
\sup\{\phi\text{-}\operatorname{id}_R(M):M\text{ is an }R\text{-module}\}.
\]
By \cite{ZhangXingQi},
\begin{equation}\label{eq:phi-global-cyclic}
\phigd(R)
=
\sup\{\pd_R(R/I):I\text{ is a nonnil ideal of }R\}.
\end{equation}

\begin{lemma}\label{lem:wleqg}
For every $\phi$-ring $R$,
\[
\phiwgd(R)\leq \phigd(R).
\]
\end{lemma}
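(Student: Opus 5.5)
The plan is to compare the two cyclic formulas. By \eqref{eq:phi-w-sup} we have
\[
\phiwgd(R)=\sup\{\fd_R(R/I): I\text{ nonnil}\},
\]
and by \eqref{eq:phi-global-cyclic} we have
\[
\phigd(R)=\sup\{\pd_R(R/I): I\text{ nonnil}\}.
\]
For every $R$-module $M$ one has $\fd_R(M)\leq\pd_R(M)$, since projective modules are flat. Hence a projective resolution of length $n$ is in particular a flat resolution of length $n$. Applying this to each $R/I$ with $I$ nonnil and taking suprema over the same index set gives the inequality termwise.

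The one point requiring care is whether \eqref{eq:phi-w-sup} is available for an arbitrary $\phi$-ring. Theorem \ref{thm:ZXQ-weak} is stated for NP-rings, and every $\phi$-ring is an NP-ring, because $N$ is a (divided) prime ideal. If one prefers not to rely on that terminology, I would argue directly from the definitions via $\Tor$. A strongly nonnil-injective resolution bound $\phigd(R)\leq n$ yields $\pd_R(R/I)\leq n$, that is, $\Ext^{n+1}_R(R/I,-)=0$ for all nonnil $I$, which is exactly \eqref{eq:phi-global-cyclic}. The condition $\pd_R(R/I)\leq n$ then forces $\Tor^R_{n+1}(R/I,M)=0$ for all $M$, which is condition (2) of Theorem \ref{thm:ZXQ-weak}. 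That condition is equivalent to $\phiwgd(R)\leq n$.

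Finally, if $\phigd(R)=\infty$ there is nothing to prove. Otherwise set $n=\phigd(R)$; the above shows $\phiwgd(R)\leq n$. I do not expect any real obstacle here. The only step needing a sentence of justification is that the hypotheses of the two cited cyclic characterizations from \cite{ZhangXingQi} are met by a $\phi$-ring.
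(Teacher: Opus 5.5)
Your proof is correct and is essentially the paper's: compare the cyclic formulas \eqref{eq:phi-w-sup} and \eqref{eq:phi-global-cyclic}, use $\fd_R(R/I)\leq\pd_R(R/I)$ for each nonnil ideal $I$, and take suprema. Your remark that every $\phi$-ring is an NP-ring, so that Theorem \ref{thm:ZXQ-weak} applies, is a worthwhile check that the paper leaves implicit. Your alternative ``direct'' route, however, still passes through that same theorem, so it does not actually avoid the NP hypothesis.
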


\begin{proof}
For every nonnil ideal $I$,
\[
\fd_R(R/I)\leq\pd_R(R/I).
\]
Taking suprema in \eqref{eq:phi-w-sup} and \eqref{eq:phi-global-cyclic} gives the result.
\end{proof}

We also use the small-dimensional characterization from \cite{ZhangXingQi}.

\begin{theorem}\label{thm:small-global}
Let $R$ be a $\phi$-ring.
\begin{enumerate}[label=\textup{(\arabic*)}]
\item $\phigd(R)=0$ if and only if $R$ is $\phi$-von Neumann regular; equivalently, $R/N$ is a field.
\item $\phigd(R)\leq 1$ if and only if $R$ is a strong $\phi$-Dedekind ring.
\end{enumerate}
\end{theorem}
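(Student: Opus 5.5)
The plan is to work entirely through the cyclic formula \eqref{eq:phi-global-cyclic}, which reduces $\phigd(R)$ to projective dimensions of $R/I$ for nonnil ideals $I$. Throughout I use that every nonnil ideal contains $N$ (Proposition \ref{prop:nonnil-contains-N}). The equivalence with $\phi$-von Neumann regularity in (1) is the definitional part of \cite{ZhangXingQi}, and I would simply cite it. What needs an argument is the ring-theoretic content: in (1), that $\pd_R(R/I)=0$ for all nonnil $I$ happens exactly when $R/N$ is a field, and in (2), that $\pd_R(R/I)\leq 1$ for all nonnil $I$ happens exactly when $R$ is strong $\phi$-Dedekind.

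\textbf{Part (1).}
\begin{itemize}
\item[(a)] Suppose $R/N$ is a field. Then $N$ is maximal, and since every prime contains $N$, $R$ is local with maximal ideal $N$. So every $a\notin N$ is a unit, the only nonnil ideal is $R$, and $\phigd(R)=0$.
\item[(b)] Conversely, suppose $\phigd(R)=0$ and take $a\notin N$. Then $R/aR$ is a cyclic projective module, so $aR=eR$ for an idempotent $e$, and $e\notin N$. From $e(1-e)=0\in N$ and primeness of $N$ we get $1-e\in N$. A nilpotent idempotent is zero, so $e=1$ and $a$ is a unit. Hence every element outside $N$ is a unit, i.e.\ $R/N$ is a field.
\end{itemize}

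\textbf{Part (2), forward direction.} Assume $\phigd(R)\leq 1$.
\begin{itemize}
\item[(a)] By Lemma \ref{lem:wleqg}, $\phiwgd(R)\leq 1$, so by Theorem \ref{thm:small-weak} $R$ is a strong $\phi$-Pr\"ufer ring.
\item[(b)] Strongness gives $\Reg(R)=R\setminus N$, so every nonnil ideal $I$ contains a regular element. The condition $\pd_R(R/I)\leq 1$ says $I$ is projective.
\item[(c)] A projective ideal containing a regular element is invertible in $T(R)$.
\item[(d)] To conclude that $D=R/N$ is Dedekind, take a nonzero ideal $J$ of $D$ with inverse image $I$. Push the relation $II^{-1}=R$ along the surjection $T(R)\to T(R)/N\cong Q$. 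I expect this to show that the image of $I^{-1}$ is an inverse of $J$ in $Q$, using $N\subseteq I$ and $N\cdot I^{-1}\subseteq N$ by dividedness.
\item[(e)] Thus $D$ is Dedekind, so $R$ is $\phi$-Dedekind, and it is strong by (a).
\end{itemize}

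\textbf{Part (2), converse.} Assume $R$ is strong $\phi$-Dedekind.
\begin{itemize}
\item[(a)] Strongness gives $K=0$ and $T(R)=R_N$, so $R\to\phi(R)$ identifies $R$ with $\phi(R)$. Under this identification, $\phi$-invertibility of a nonnil ideal becomes ordinary invertibility.
\item[(b)] Invertible ideals are projective, so $\pd_R(R/I)\leq 1$ for every nonnil $I$, and \eqref{eq:phi-global-cyclic} gives $\phigd(R)\leq 1$.
\end{itemize}

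The main obstacle I expect is the translation in step (d) of the forward direction: checking that invertibility of $I$ over $R$ passes to invertibility of $I/N$ over $D$ through the Milnor-type square $R\to T$, $D\to Q$. If the direct computation is messy, I would instead use that a nonnil ideal of $R$ is invertible if and only if its image in $D$ is. For principal nonnil ideals this is clear from Lemma \ref{lem:N=aN}, and the finitely generated case can be checked locally, where $R_{\mathfrak m}$ is a $\phi$-chain ring and Corollary \ref{cor:principal-reduction} makes nonnil finitely generated ideals principal. Noetherianity of $D$ would follow separately, since $D/J\cong R/I$ has $\pd_R\leq 1$ for every nonzero $J$, which forces every nonzero ideal of $D$ to be invertible and hence finitely generated.
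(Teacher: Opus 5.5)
Your proposal is correct. The paper gives no argument for this statement: Theorem~\ref{thm:small-global} is quoted from \cite{ZhangXingQi} without proof. So your proposal is an independent derivation rather than a reconstruction of the paper's proof.

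You deduce the result from the cyclic formula \eqref{eq:phi-global-cyclic} using elementary ideal theory:
\begin{enumerate}[label=\textup{(\roman*)}]
\item a projective cyclic quotient $R/aR$ produces an idempotent, which gives part (1);
\item a projective ideal containing a regular element is invertible;
\item invertibility descends along the ring map $\rho\colon T=R_N\to R_N/NR_N\cong Q$, which restricts to $R\to D\subseteq Q$; together with (ii) this gives the forward half of (2).
\end{enumerate}
This shows the theorem is an essentially formal consequence of \eqref{eq:phi-global-cyclic}. For the converse of (2), however, you rely on the Anderson--Badawi definition of $\phi$-Dedekind rings (every nonnil ideal is $\phi$-invertible). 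The paper never states that definition; it only records \eqref{eq:phi-Dedekind-char}. This is legitimate, but you should say explicitly which definition you are using.

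The paper's own later tools give a third route that starts from \eqref{eq:phi-Dedekind-char}:
\begin{enumerate}[label=\textup{(\roman*)}]
\item Proposition~\ref{prop:low-degree} and Lemma~\ref{lem:wleqg} force strongness whenever $\phigd(R)\leq 1$.
\item For strong $R$, Corollary~\ref{cor:cyclic-dimension-equality} gives $\pd_R(D/J)=\pd_D(D/J)$.
\item Hence $\phigd(R)\leq 1$ if and only if $R$ is strong and every nonzero ideal of $D$ is projective, that is, $D$ is Dedekind.
\end{enumerate}
None of these results depends on Theorem~\ref{thm:small-global}, so this route is not circular. It uses the Milnor--Ferrand patching machinery, which your argument avoids entirely.

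Two simplifications to your write-up:
\begin{enumerate}[label=\textup{(\roman*)}]
\item In step (d) you need neither $N\subseteq I$ nor $NI^{-1}\subseteq N$. Write $1=\sum x_iy_i$ with $x_i\in I$ and $y_i\in I^{-1}$. Then $\rho(y_i)J=\rho(y_iI)\subseteq\rho(R)=D$ and $1=\sum\rho(x_i)\rho(y_i)$. Hence $J$ is invertible directly, and your fallback via localization and $\phi$-chain rings is unnecessary.
\item Strongness in step (a) does not require Theorem~\ref{thm:small-weak}. For $a\notin N$, the bound $\pd_R(R/aR)\leq 1$ makes $aR\cong R/(0:a)$ projective, so $(0:a)$ is generated by an idempotent. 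Since $(0:a)\subseteq N$ by primeness, that idempotent is nilpotent, hence zero, and so $a$ is regular.
\end{enumerate}
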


Recall that
\begin{equation}\label{eq:phi-Dedekind-char}
R\text{ is }\phi\text{-Dedekind}
\Longleftrightarrow
R/N\text{ is a Dedekind domain};
\end{equation}
see \cite{AndersonBadawi2005}.  In particular, every $\phi$-Dedekind ring is $\phi$-Pr\"ufer.

\begin{theorem}\label{thm:phi-global-trichotomy}
Let $R$ be a $\phi$-Dedekind ring and put $N=\Nil(R)$.  Then
\[
\phigd(R)=
\begin{cases}
0,& R/N\text{ is a field},\\
1,& R\text{ is strong and }R/N\text{ is a nonfield Dedekind domain},\\
\infty,& R\text{ is not strong}.
\end{cases}
\]
\end{theorem}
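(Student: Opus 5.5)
The plan is to reduce everything to results already in hand, splitting into the strong and non-strong cases exactly as in the proof of Theorem \ref{thm:trichotomy-final}.

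\emph{Non-strong case.} By \eqref{eq:phi-Dedekind-char}, $R/N$ is a Dedekind domain, hence a Pr\"ufer domain. Theorem \ref{thm:phi-prufer-char} then shows that $R$ is $\phi$-Pr\"ufer. If $R$ is not strong, Theorem \ref{thm:global-infinite} gives $\phiwgd(R)=\infty$. Lemma \ref{lem:wleqg} yields $\phigd(R)\geq\phiwgd(R)$, so $\phigd(R)=\infty$. This case needs no further work.

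\emph{Strong case.} Here $R$ is a strong $\phi$-Dedekind ring, and Theorem \ref{thm:small-global}(2) gives $\phigd(R)\leq 1$. Part (1) of the same theorem says $\phigd(R)=0$ exactly when $R/N$ is a field. Therefore, when $R/N$ is a nonfield Dedekind domain, $\phigd(R)=1$.

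\emph{Consistency of the cases.} One should also check that the three cases are exhaustive and compatible. If $R/N$ is a field, then Theorem \ref{thm:small-global}(1) gives $\phigd(R)=0$ directly. This does not clash with the non-strong case: if $R/N$ is a field, then $N$ is the unique maximal ideal, so every non-nilpotent element is a unit. Hence $Z(R)\subseteq N$, and since $N\subseteq Z(R)$ always holds, $Z(R)=N$ and $R$ is automatically strong. The same observation shows the first case of the statement is covered whether or not strongness is assumed.

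The only point I expect to require care is the identification $\phigd(R)=\infty$ in the non-strong case, which rests entirely on Lemma \ref{lem:wleqg}. That lemma in turn depends on the cyclic formula \eqref{eq:phi-global-cyclic} from \cite{ZhangXingQi}, and I would check that it holds for every $\phi$-ring and not only for NP-rings. If some hypothesis of that formula were in doubt, I would fall back on a direct argument. By Proposition \ref{prop:low-degree} and Theorem \ref{thm:local-infinite}, there is a nonnil principal ideal $aR$ with $\fd_R(R/aR)$ unbounded along localizations. Since $\pd\geq\fd$ for any module, the same ideals give $\pd_R(R/I)=\infty$ for some nonnil ideal $I$. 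It would then remain only to know that $\phigd(R)$ dominates $\pd_R(R/I)$ for nonnil $I$.
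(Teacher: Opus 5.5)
Your proposal is correct and follows the paper's proof: Theorem \ref{thm:small-global} settles the field case and the strong nonfield case, and in the non-strong case you use that $R$ is $\phi$-Pr\"ufer, then Theorem \ref{thm:global-infinite}, then Lemma \ref{lem:wleqg}, exactly as the paper does. The fallback in your last paragraph is not needed, because every $\phi$-ring is an NP-ring, so the cyclic formulas and Lemma \ref{lem:wleqg} apply directly; the fallback would in any case need the same domination $\phigd(R)\geq\pd_R(R/I)$, and Theorem \ref{thm:local-infinite} only shows that $\sup_{a\notin N}\fd_R(R/aR)$ is infinite, not that some single nonnil ideal has infinite flat dimension.
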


\begin{proof}
If $R/N$ is a field, Theorem \ref{thm:small-global}(1) gives $\phigd(R)=0$.

Suppose $R$ is strong and $R/N$ is not a field.  By \eqref{eq:phi-Dedekind-char}, $R/N$ is a nonfield Dedekind domain.  Theorem \ref{thm:small-global}(2) gives $\phigd(R)\leq 1$, while Theorem \ref{thm:small-global}(1) excludes the value $0$.  Hence $\phigd(R)=1$.

Finally suppose $R$ is not strong.  Since every $\phi$-Dedekind ring is $\phi$-Pr\"ufer, Theorem \ref{thm:global-infinite} gives
\[
\phiwgd(R)=\infty.
\]
Lemma \ref{lem:wleqg} forces $\phigd(R)=\infty$.
\end{proof}

\begin{corollary}\label{cor:global-finite-strong}
For a $\phi$-Dedekind ring $R$, the following are equivalent:
\[
\phigd(R)<\infty,
\qquad
\phigd(R)\leq 1,
\qquad
R\text{ is strong}.
\]
\end{corollary}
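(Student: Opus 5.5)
The plan is to obtain the three equivalences in Corollary \ref{cor:global-finite-strong} directly from Theorem \ref{thm:phi-global-trichotomy}, the same way Corollary \ref{cor:finite-iff-strong} was obtained from the weak trichotomy. We prove the cycle
\[
\phigd(R)\leq 1\ \Longrightarrow\ \phigd(R)<\infty\ \Longrightarrow\ R\text{ is strong}\ \Longrightarrow\ \phigd(R)\leq 1.
\]

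The first implication is trivial.

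For the second, we argue by contraposition. Suppose $R$ is not strong. Then Theorem \ref{thm:phi-global-trichotomy} gives $\phigd(R)=\infty$. Equivalently, this follows from Theorem \ref{thm:global-infinite}, which applies because a $\phi$-Dedekind ring is $\phi$-Pr\"ufer by \eqref{eq:phi-Dedekind-char}, combined with Lemma \ref{lem:wleqg}. So finiteness of $\phigd(R)$ forces $R$ to be strong.

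For the third, assume $R$ is strong. There are two cases.
\begin{itemize}
\item If $R/N$ is a field, Theorem \ref{thm:small-global}(1) gives $\phigd(R)=0$.
\item Otherwise $R/N$ is a nonfield Dedekind domain, and the middle case of Theorem \ref{thm:phi-global-trichotomy} gives $\phigd(R)=1$.
\end{itemize}
Alternatively, Theorem \ref{thm:small-global}(2) applies directly, since $R$ is then a strong $\phi$-Dedekind ring.

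There is no real obstacle here. The only point to check is that the three cases of Theorem \ref{thm:phi-global-trichotomy} exhaust all $\phi$-Dedekind rings. They do: a ring whose reduced quotient is a field is automatically strong. Indeed, when $R/N$ is a field, $N$ is maximal, so every element outside $N$ is a unit and hence $Z(R)=N$. Thus the first case lies inside the strong case, and the split into strong and non-strong rings covers everything.
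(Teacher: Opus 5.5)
Your proposal is correct and matches the paper's approach. The paper states the corollary without proof as a direct consequence of Theorem \ref{thm:phi-global-trichotomy} and Theorem \ref{thm:small-global}(2), in parallel with Corollary \ref{cor:finite-iff-strong}; your check that the field case is already strong simply makes explicit that the trichotomy's cases are exhaustive.
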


\section{Strong \texorpdfstring{$\phi$}{phi}-rings as Milnor pullbacks}\label{sec:milnor}

Throughout this section $R$ is a strong $\phi$-ring.  We write
\[
N=\Nil(R),\qquad D=R/N,\qquad S=R\setminus N,
\]
and denote the quotient field of the domain $D$ by $Q$.  Since $R$ is strong,
\[
S=\Reg(R),
\]
so the localization $S^{-1}R$ is the total quotient ring $T(R)$.  We abbreviate it by $T$.

\subsection{The pullback square}

The first observation is the strong form of Lemma \ref{lem:N=aN}.

\begin{lemma}\label{lem:strong-N-auto}
For every $s\in S$, multiplication by $s$ is an automorphism of $N$.
Consequently, the localization map
\[
N\longrightarrow S^{-1}N
\]
is an isomorphism.
\end{lemma}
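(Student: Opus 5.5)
The plan is to prove surjectivity and injectivity of multiplication by $s$ on $N$ separately, and then deduce the statement about $N\to S^{-1}N$ from the resulting $S$-local structure.

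\emph{Surjectivity.} Fix $s\in S=R\setminus N$. Lemma \ref{lem:N=aN} gives $N=sN$, so multiplication by $s$ maps $N$ onto $N$.

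\emph{Injectivity.} This is where strongness enters. Since $R$ is strong, $Z(R)=N$, so $s\notin N$ means $s$ is a non-zero-divisor of $R$. If $x\in N$ satisfies $sx=0$, then $x=0$. Thus multiplication by $s$ is a bijective $R$-linear endomorphism of $N$, i.e. an automorphism. Equivalently, this is Lemma \ref{lem:L-local} with $K=0$ by \eqref{eq:Kstrong}, so that $L=N$.

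\emph{The localization map.} The map $N\to S^{-1}N$ sends $x\mapsto x/1$.
\begin{enumerate}[label=\textup{(\roman*)}]
\item It is injective. If $x/1=0$, then $tx=0$ for some $t\in S$, and injectivity of multiplication by $t$ gives $x=0$.
\item It is surjective. Given $x/s$ with $x\in N$ and $s\in S$, use surjectivity of multiplication by $s$ to write $x=sy$ with $y\in N$. Then $x/s=sy/s=y/1$.
\end{enumerate}
Hence $N\cong S^{-1}N$.

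There is no real obstacle here. The only point needing care is that injectivity genuinely requires $Z(R)=N$, whereas surjectivity holds in any $\phi$-ring.
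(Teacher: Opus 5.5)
Your proposal is correct and follows essentially the same route as the paper's proof. Surjectivity comes from $N=sN$ (Lemma \ref{lem:N=aN}), and injectivity from $s$ being a non-zero-divisor because $Z(R)=N$. The isomorphism $N\to S^{-1}N$ is then obtained by writing $x/s=y/1$ with $sy=x$, and by using that elements of $S$ are regular on $N$.
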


\begin{proof}
Lemma \ref{lem:N=aN} gives $N=sN$, so multiplication by $s$ is surjective on $N$.  Since $R$ is strong, $s\notin N=Z(R)$, and therefore $s$ is a non-zero-divisor on $R$.  Its restriction to $N$ is injective.  Thus multiplication by $s$ is an automorphism of $N$.

For the localization statement, every fraction $n/s\in S^{-1}N$ has a unique representative in $N$: choose the unique $m\in N$ with $sm=n$, and then $n/s=m/1$.  Injectivity of $N\to S^{-1}N$ follows because every element of $S$ is regular on $N$.
\end{proof}

We identify $N$ with $S^{-1}N$ through this lemma.

\begin{proposition}\label{prop:TmodN=Q}
There is a natural exact sequence
\[
0\longrightarrow N\longrightarrow T\longrightarrow Q\longrightarrow0.
\]
In particular,
\[
T/N\cong Q.
\]
\end{proposition}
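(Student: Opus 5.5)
The plan is to localize the exact sequence $0\to N\to R\to D\to 0$ at the multiplicative set $S=R\setminus N$ and then identify each of the three localized terms.

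\textbf{Exactness.} Localization is exact, so we obtain an exact sequence
\[
0\longrightarrow S^{-1}N\longrightarrow S^{-1}R\longrightarrow S^{-1}D\longrightarrow 0 .
\]
\textbf{Identifying the terms.} The middle term is $S^{-1}R=T$, because strongness gives $S=\Reg(R)$, as recorded at the start of the section. By Lemma \ref{lem:strong-N-auto}, the localization map $N\to S^{-1}N$ is an isomorphism. Hence the left term may be replaced by $N$ itself, and the map $N\to T$ becomes the composite of the inclusion $N\subseteq R$ with $R\to T$, i.e.\ $n\mapsto n/1$.

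\textbf{The right term.} It remains to show that $S^{-1}D\cong Q$. The image of $S$ in $D$ is exactly $D\setminus\{0\}$, since an element $a\in R$ maps to a nonzero class precisely when $a\notin N$. Localizing the $R$-module $D$ at $S$ is therefore the same as localizing the domain $D$ at its nonzero elements. This gives $S^{-1}D=\Frac(D)=Q$, with the surjection $T\to Q$ given by $r/s\mapsto \overline r/\overline s$.

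\textbf{Checks.} One point needs verification: this surjection should be well defined and agree with the localized quotient map. That is immediate, since both are induced by $R\to D$ via the universal property of localization. Naturality of the sequence is automatic from the construction. Finally, $T/N\cong Q$ follows from exactness, with $N$ viewed inside $T$ via the identification above.

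\textbf{Main obstacle.} I expect no real difficulty here. The only subtle step is justifying that $N$ (and not merely $S^{-1}N$) sits as the kernel, and Lemma \ref{lem:strong-N-auto} supplies exactly this: regularity of each $s\in S$ on $N$ gives injectivity of $N\to S^{-1}N$, and $N=sN$ gives surjectivity.
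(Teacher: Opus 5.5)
Your proof is correct and follows the paper's argument: you localize $0\to N\to R\to D\to 0$ at $S$, identify $S^{-1}N$ with $N$ via Lemma \ref{lem:strong-N-auto}, and identify $S^{-1}R$ with $T$ and $S^{-1}D$ with $Q$. Your added remark that the inclusion $N\to T$ is $n\mapsto n/1$ is a harmless clarification of the identification the paper uses implicitly.
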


\begin{proof}
Localizing the exact sequence
\[
0\longrightarrow N\longrightarrow R\longrightarrow D\longrightarrow0
\]
at $S$ gives
\[
0\longrightarrow S^{-1}N\longrightarrow S^{-1}R
   \longrightarrow S^{-1}D\longrightarrow0.
\]
By Lemma \ref{lem:strong-N-auto}, $S^{-1}N=N$, while $S^{-1}R=T$.  The image of $S$ in $D$ is precisely $D\setminus\{0\}$, hence
\[
S^{-1}D=\Frac(D)=Q.
\]
The assertion follows.
\end{proof}

\begin{theorem}[Canonical Milnor square]\label{thm:canonical-milnor}
For every strong $\phi$-ring $R$, the square
\begin{equation}\label{eq:canonical-square}
\begin{CD}
R @>{\iota}>> T\\
@V{\pi}VV @VV{\rho}V\\
D @>{j}>> Q
\end{CD}
\end{equation}
is cartesian.  Equivalently,
\begin{equation}\label{eq:R-pullback}
R\cong T\times_QD.
\end{equation}
Moreover, $\rho:T\to Q$ is surjective.
\end{theorem}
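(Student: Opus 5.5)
We will prove the theorem by checking directly that the natural map
\[
(\iota,\pi):R\longrightarrow T\times_QD=\{(t,d)\in T\times D:\rho(t)=j(d)\}
\]
is a bijection. The first step is to fix the maps and check that the square commutes. Here $\iota$ is localization at $S=\Reg(R)$, and $\pi$ is the quotient map. The map $j$ is the inclusion of $D$ into $Q=\Frac(D)$. The map $\rho$ is the localization $S^{-1}R\to S^{-1}D$ of $\pi$, using the identification $S^{-1}D=Q$ from Proposition \ref{prop:TmodN=Q}. Both composites send $r$ to $\overline r/1$, so the square commutes. Surjectivity of $\rho$ is immediate from the exact sequence $0\to N\to T\to Q\to 0$ of Proposition \ref{prop:TmodN=Q}.

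Injectivity is the second step. Suppose $\iota(r)=0$ and $\pi(r)=0$. Then $r\in N$, and $r/1=0$ in $T$. Since $S$ consists of regular elements, $\iota$ is injective, so $r=0$.

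Surjectivity onto the fibre product is the main step. Take $(t,d)$ with $\rho(t)=j(d)$, and choose $r\in R$ with $\pi(r)=d$. Then $\rho(t-\iota(r))=j(d)-j(d)=0$. By exactness, $t-\iota(r)$ lies in the kernel of $\rho$, which is the copy $S^{-1}N$ of $N$ inside $T$. Lemma \ref{lem:strong-N-auto} says that $N\to S^{-1}N$ is an isomorphism. Hence $t-\iota(r)=\iota(n)$ for some $n\in N$. Put $r'=r+n$. Then $\iota(r')=t$, and $\pi(r')=\pi(r)=d$ because $n\in N$. This proves surjectivity, so the square is cartesian and \eqref{eq:R-pullback} holds.

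The only delicate point is the identification of $\ker\rho$ with the image of $N$ under $\iota$. Without it, a difference $t-\iota(r)$ could be a fraction $n/s$ that does not come from $R$. This is exactly where strongness enters, through Lemma \ref{lem:strong-N-auto}: one solves $sm=n$ uniquely in $N$ and uses $n/s=m/1$.
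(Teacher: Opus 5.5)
Your proposal is correct and follows essentially the same route as the paper's proof. Both arguments get injectivity from $\iota$ being injective, and both get surjectivity by lifting $d$ to $r$ and writing $t-r/1\in\ker\rho=S^{-1}N$ as $n/1$ with $n\in N$ via Lemma~\ref{lem:strong-N-auto}; surjectivity of $\rho$ comes from Proposition~\ref{prop:TmodN=Q}.
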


\begin{proof}
The maps in \eqref{eq:canonical-square} are the localization map $\iota$, the quotient map $\pi$, the map $\rho$ from Proposition \ref{prop:TmodN=Q}, and the natural inclusion $j:D\hookrightarrow Q$.  The square commutes.

Since $S$ consists of non-zero-divisors, $\iota:R\to T$ is injective.  Hence the induced map
\[
\Psi:R\longrightarrow T\times_QD,
\qquad
r\longmapsto(r/1,r+N),
\]
is injective.

For surjectivity, take $(t,d)\in T\times_QD$.  Choose $r\in R$ with $\pi(r)=d$.  The equality
\[
\rho(t)=j(d)=\rho(r/1)
\]
in $Q$ gives
\[
t-r/1\in\ker(\rho)=N
\]
by Proposition \ref{prop:TmodN=Q}.  Under the identification $N=S^{-1}N$ of Lemma \ref{lem:strong-N-auto}, there exists $n\in N\subseteq R$ such that
\[
t-r/1=n/1.
\]
Thus $t=(r+n)/1$.  Since $n\in N$,
\[
\pi(r+n)=\pi(r)=d,
\]
and therefore $(t,d)=\Psi(r+n)$.  This proves \eqref{eq:R-pullback}.  The surjectivity of $\rho$ was already obtained in Proposition \ref{prop:TmodN=Q}.
\end{proof}

\begin{corollary}\label{cor:MV-ring-sequence}
There is a short exact sequence of $R$-modules
\begin{equation}\label{eq:ring-MV}
0\longrightarrow R
\xrightarrow{\ (\iota,\pi)\ }
T\oplus D
\xrightarrow{\ \rho-j\ }
Q
\longrightarrow0.
\end{equation}
\end{corollary}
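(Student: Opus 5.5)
The sequence \eqref{eq:ring-MV} is the standard Mayer--Vietoris sequence attached to a cartesian square with one surjective leg. I would deduce it directly from Theorem \ref{thm:canonical-milnor}. All four rings are $R$-algebras via $\iota$, $\pi$, $\rho\circ\iota=j\circ\pi$. All maps in \eqref{eq:canonical-square} are $R$-linear. Hence $(\iota,\pi)$ and $\rho-j$ are homomorphisms of $R$-modules, where $\rho-j$ means $(t,d)\mapsto\rho(t)-j(d)$. The composite is $\rho\iota-j\pi=0$ because the square commutes.

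Exactness at $R$ is the injectivity of $(\iota,\pi)$, which holds because $\iota$ is already injective: $S=\Reg(R)$ consists of non-zero-divisors. Exactness in the middle is exactly the cartesian property. A pair $(t,d)$ with $\rho(t)=j(d)$ is an element of $T\times_QD$. By Theorem \ref{thm:canonical-milnor} it equals $(\iota(r),\pi(r))$ for some $r\in R$; this is the map $\Psi$ there.

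Exactness at $Q$ is the surjectivity of $\rho-j$. It already follows from the surjectivity of $\rho$, also recorded in Theorem \ref{thm:canonical-milnor} (via Proposition \ref{prop:TmodN=Q}): any $q\in Q$ equals $\rho(t)-j(0)$ for some $t\in T$.

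There is no real obstacle. The only point needing care is the sign convention: one must use the difference $\rho-j$ rather than the sum, so that the kernel is precisely the fibre product.
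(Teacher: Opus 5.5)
Your proof is correct and follows the paper's argument: the kernel of $\rho-j$ is the fibre product $T\times_QD$, which Theorem~\ref{thm:canonical-milnor} identifies with $R$, and surjectivity at $Q$ comes from the surjectivity of $\rho$. You also spell out the $R$-linearity, the vanishing of the composite, and the injectivity via $\iota$, which the paper leaves implicit.
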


\begin{proof}
The kernel of $(t,d)\mapsto\rho(t)-j(d)$ is the fiber product $T\times_QD$, which is $R$ by Theorem \ref{thm:canonical-milnor}.  The last map is surjective because $\rho:T\to Q$ is surjective.
\end{proof}

\subsection{Tor-acyclicity of torsion quotient modules}

The next lemma is the homological input needed for the inequality in one direction.

\begin{lemma}\label{lem:Tor-N-torsion-zero}
Let $M$ be a torsion $D$-module, regarded as an $R$-module via $\pi:R\to D$.  Then
\begin{equation}\label{eq:Tor-N-torsion-zero}
\Tor_i^R(N,M)=0
\qquad(i\geq0).
\end{equation}
Consequently,
\begin{equation}\label{eq:Tor-D-torsion-zero}
\Tor_i^R(D,M)=0
\qquad(i\geq1).
\end{equation}
\end{lemma}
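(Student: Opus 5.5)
The plan is to copy the argument of Lemma \ref{lem:Tor-LK-zero}, this time with $N$ in place of $L$. By Lemma \ref{lem:strong-N-auto}, multiplication by every $s\in S$ is an automorphism of $N$. By functoriality of $\Tor^R_i(-,M)$ in the first variable, multiplication by $s$ is then an automorphism of $\Tor^R_i(N,M)$ for each $i\geq 0$. So each $\Tor^R_i(N,M)$ is $S$-local, meaning the map $\Tor^R_i(N,M)\to S^{-1}\Tor^R_i(N,M)$ is an isomorphism.

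Next I would use that localization is exact and commutes with Tor:
\[
S^{-1}\Tor^R_i(N,M)\cong\Tor^{T}_i(S^{-1}N,S^{-1}M).
\]
Here $S^{-1}M\cong Q\otimes_D M$, because the image of $S$ in $D$ is exactly $D\setminus\{0\}$ (as in Proposition \ref{prop:TmodN=Q}). Since $M$ is torsion over $D$, every element of $M$ is killed by the image of some $s\in S$, so $S^{-1}M=0$. Hence the localized Tor vanishes. An $S$-local module with zero localization is zero, which gives \eqref{eq:Tor-N-torsion-zero}. For $i=0$ this can also be seen directly: write $n\otimes m=sn'\otimes m=n'\otimes sm=0$, choosing $s$ with $sm=0$ and using $N=sN$.

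For \eqref{eq:Tor-D-torsion-zero}, I would apply $-\otimes_R M$ to $0\to N\to R\to D\to 0$. The long exact sequence gives $\Tor^R_i(R,M)=0$ for $i\geq1$ and $\Tor^R_{i-1}(N,M)=0$ for all $i\geq 1$. Both neighbours of $\Tor^R_i(D,M)$ therefore vanish, so $\Tor^R_i(D,M)=0$ for $i\geq 1$.

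The only point needing a little care is justifying that $s$ acts invertibly on $\Tor^R_i(N,M)$. Multiplication by $s$ on $N$ is an $R$-linear automorphism, so the functor $\Tor^R_i(-,M)$ carries it to an automorphism. The map induced by multiplication by $s$ on the first variable equals multiplication by $s$ on the Tor module, because Tor is $R$-bilinear over a commutative ring. With this in hand, the rest is routine.
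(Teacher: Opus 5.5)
Your proposal is correct and follows essentially the same route as the paper. Both arguments get $S$-locality of $\Tor_i^R(N,M)$ from Lemma \ref{lem:strong-N-auto}, use $S^{-1}M=0$ to kill the localized Tor, and finish with the long exact sequence for $0\to N\to R\to D\to 0$; your direct check of the case $i=0$ is a harmless extra, close in spirit to Remark \ref{rem:annihilated-version}.
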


\begin{proof}
Put
\[
X_i=\Tor_i^R(N,M).
\]
For every $s\in S$, multiplication by $s$ on $N$ is an automorphism by Lemma \ref{lem:strong-N-auto}.  Functoriality of Tor therefore shows that multiplication by $s$ is an automorphism of $X_i$.  Hence $X_i$ is $S$-local.

Since $M$ is torsion over the domain $D$, every $m\in M$ is annihilated by some nonzero element of $D$.  Lifting such an element to $S$ shows that
\[
S^{-1}M=0.
\]
Localization of Tor gives
\[
S^{-1}X_i
\cong
\Tor_i^{T}(S^{-1}N,S^{-1}M)=0.
\]
Because $X_i$ is already $S$-local, the canonical map $X_i\to S^{-1}X_i$ is an isomorphism.  Thus $X_i=0$ for every $i\geq0$, proving \eqref{eq:Tor-N-torsion-zero}.

Apply $-\otimes_RM$ to
\[
0\longrightarrow N\longrightarrow R\longrightarrow D\longrightarrow0.
\]
For $i\geq2$ the long exact Tor sequence gives
\[
\Tor_i^R(D,M)\cong\Tor_{i-1}^R(N,M)=0,
\]
and for $i=1$ it gives an injection
\[
\Tor_1^R(D,M)\hookrightarrow N\otimes_RM=0.
\]
This proves \eqref{eq:Tor-D-torsion-zero}.
\end{proof}

\begin{remark}\label{rem:annihilated-version}
If $M$ is annihilated by a single element $s\in S$, the first part of the preceding proof can be shortened: multiplication by $s$ on $\Tor_i^R(N,M)$ is simultaneously an automorphism, because it is so on $N$, and the zero map, because $sM=0$.  The localization argument is needed only to treat an arbitrary torsion $D$-module at once.
\end{remark}

\subsection{The patching theorems}

We record the precise forms of the two patching results that will be used.  Let
\begin{equation}\label{eq:general-milnor}
\begin{CD}
A @>>> A_1\\
@VVV @VV{\alpha_1}V\\
A_2 @>{\alpha_2}>> C
\end{CD}
\end{equation}
be a cartesian square of rings and assume that $\alpha_1:A_1\to C$ is surjective.  Given an $A_1$-module $M_1$, an $A_2$-module $M_2$, and a $C$-linear isomorphism
\[
\theta:C\otimes_{A_1}M_1
\xrightarrow{\ \sim\ }
C\otimes_{A_2}M_2,
\]
define
\begin{equation}\label{eq:patched-module}
P(M_1,M_2;\theta)
=
\left\{(x_1,x_2)\in M_1\oplus M_2:
\theta(1\otimes x_1)=1\otimes x_2\right\}.
\end{equation}

\begin{theorem}[Milnor--Ferrand patching]\label{thm:patching}
In the situation of \eqref{eq:general-milnor}:
\begin{enumerate}[label=\textup{(\arabic*)}]
\item if $M_1$ and $M_2$ are flat over $A_1$ and $A_2$, respectively, then $P(M_1,M_2;\theta)$ is flat over $A$;
\item if $M_1$ and $M_2$ are projective over $A_1$ and $A_2$, respectively, then $P(M_1,M_2;\theta)$ is projective over $A$.
\end{enumerate}
In both cases the canonical base-change maps recover $M_1$ and $M_2$.
\end{theorem}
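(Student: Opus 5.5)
The statement is classical: part (2) is Milnor's patching theorem and part (1) is Ferrand's. I would prove it along those lines, in three steps. First, the base-change assertion. Then projectivity, by a reduction to free modules. Then flatness, by a Tor-vanishing argument.

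\emph{Base change.} Write $M_C=C\otimes_{A_2}M_2$ and $\bar x=1\otimes x$. Because $\alpha_1$ is surjective, the map $M_1\to C\otimes_{A_1}M_1\cong M_C$ is surjective. From this I would deduce an exact sequence of $A$-modules
\[
0\longrightarrow P\longrightarrow M_1\oplus M_2\xrightarrow{(x_1,x_2)\mapsto\theta(\bar x_1)-\bar x_2} M_C\longrightarrow 0,
\]
where $P=P(M_1,M_2;\theta)$. The projections $P\to M_i$ are then surjective: given $x_2$, lift $\bar x_2$ through $\theta$ to some $x_1$. Next I would check that the induced maps $A_i\otimes_AP\to M_i$ are isomorphisms. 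I expect to prove this first when $M_1,M_2$ are free with $\theta$ liftable, by a direct computation. The general case would then follow from additivity: the construction satisfies $P(M\oplus M',\,\cdot\,;\theta\oplus\theta')=P(M;\theta)\oplus P(M';\theta')$, and in the projective case this gives isomorphisms on direct summands. Throughout I would use that the square is cartesian, so that $A=A_1\times_CA_2$.

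\emph{Projective case.} If $M_1=A_1^{(I)}$, $M_2=A_2^{(I)}$ and $\theta$ lifts to an automorphism of $A_1^{(I)}$, then $P\cong A^{(I)}$: compose the patching with the lift. For $\theta\oplus\theta^{-1}$, Whitehead's lemma writes it as a product of elementary matrices over $C$. Each elementary matrix lifts through the surjection $\alpha_1$, so $\theta\oplus\theta^{-1}$ lifts. Hence
\[
P(F;\theta)\oplus P(F;\theta^{-1})\cong P(F\oplus F;\theta\oplus\theta^{-1})
\]
is free, and $P(F;\theta)$ is projective. For infinitely generated $F$, the same argument works with column-finite matrices, handled blockwise.

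For general projective $M_1,M_2$, I would use the Eilenberg swindle. It produces projective complements $Q_1,Q_2$ such that each $M_i\oplus Q_i$ is free of the same infinite rank. It also gives an isomorphism $\theta'\colon C\otimes Q_1\cong C\otimes Q_2$. To get this, I would take $Q_i=M_i'\oplus F_i$, with $M_i'$ a complement of $M_i$ and $F_i$ a large free module, and use $\theta$ on the relevant reductions. Then $P(M;\theta)$ is a direct summand of a patched free module, which is projective by the previous paragraph.

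\emph{Flat case, the main obstacle.} Here no lifting trick is available. I would show $\Tor_1^A(P,X)=0$ for all $A$-modules $X$, working from the short exact sequence above. The key point is to express $\Tor^A_*(M_1,X)$ through $A_1$, using that $M_1$ is $A_1$-flat and that $M_1\cong A_1\otimes_AP$. I would first try the case where $X$ is an $A_1$-module or an $A_2$-module: the ring sequence $0\to A\to A_1\oplus A_2\to C\to 0$ and its analogue for $P$ should let the Tor terms cancel. A general $X$ would then be handled by the Mayer--Vietoris sequence $0\to X\to (A_1\otimes X)\oplus(A_2\otimes X)\to C\otimes X$, or via Ferrand's criterion that flatness can be tested after base change to $A_1$ and $A_2$ together with $\Tor$-independence over the square. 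If this becomes too delicate, I would instead reduce to Ferrand's original argument, as presented in the Stacks Project treatment of Milnor squares.
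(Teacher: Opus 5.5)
You have a genuine gap in part (1). The proposal never actually proves flatness, and the one concrete tool it names is false. For an arbitrary $A$-module $X$, the sequence $0\to X\to(A_1\otimes_AX)\oplus(A_2\otimes_AX)\to C\otimes_AX$ is not left exact. Tensoring $0\to A\to A_1\oplus A_2\to C\to 0$ with $X$ shows that the kernel of $X\to(A_1\otimes_AX)\oplus(A_2\otimes_AX)$ is the image of the connecting map $\Tor_1^A(C,X)\to X$, and this need not vanish. For example, take $A=k[x,y]/(xy)=k[x]\times_k k[y]$ and $X=A/(x-y)\cong k[t]/(t^2)$. Then $A_1\otimes_AX=X/yX\cong k$ and $A_2\otimes_AX=X/xX\cong k$, so the nonzero element $t$ dies in both.

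Your other suggestion (testing flatness after base change to $A_1$ and $A_2$ plus Tor-independence) restates Ferrand's theorem rather than proving it. Your base-change argument, by additivity over direct summands of free modules, also does not reach flat modules, so the isomorphism $A_1\otimes_AP\cong M_1$ in case (1) is unproved too. This case cannot be bypassed here, because in Theorem~\ref{thm:torsion-comparison} the $F_i$ are only $D$-flat. So part (1), including its base-change clause, rests entirely on Ferrand's theorem, and that citation is the whole of the paper's proof. Falling back on Ferrand is the right move, but then your proposal adds nothing to (1) beyond the citation.

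There is also a smaller error. The projection $P\to M_1$ need not be surjective, since only $\alpha_1$ is assumed surjective; your justification works only for $P\to M_2$. In this paper's square $j\colon D\to Q$ is not surjective. With $M_1=T$, $M_2=D$ and $\theta=\mathrm{id}$ one gets $P=R$, and $R\to T$ is onto only if $D$ is a field. What does hold is that the image of $P\to M_1$ contains $IM_1$ (with $I=\ker\alpha_1$) and generates $M_1$ over $A_1$. That gives surjectivity of $A_1\otimes_AP\to M_1$.

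Your projective argument does not use the false claim. It is correct and more self-contained than the paper's appeal to Milnor and Chen--Le. To make ``handled blockwise'' rigorous in infinite rank, use the block form of Whitehead's lemma, valid for any automorphism $\theta$ of $C^{(I)}$:
\[
\begin{pmatrix}\theta&0\\0&\theta^{-1}\end{pmatrix}=\begin{pmatrix}1&\theta\\0&1\end{pmatrix}\begin{pmatrix}1&0\\-\theta^{-1}&1\end{pmatrix}\begin{pmatrix}1&\theta\\0&1\end{pmatrix}\begin{pmatrix}0&-1\\1&0\end{pmatrix}.
\]
Each factor lifts to an automorphism of $A_1^{(I)}\oplus A_1^{(I)}$, because any $C$-linear endomorphism of $C^{(I)}$ lifts to $A_1^{(I)}$. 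In the swindle you do not need $\theta$ to build $\theta'$: for infinite $\kappa$ large enough, both $C\otimes Q_i$ are already isomorphic to $C^{(\kappa)}$.
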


\begin{proof}
The flat assertion, including the base-change statement, is Ferrand's theorem on modules over a fiber product; see \cite[Theorem 2.2(iv)]{Ferrand2003}.  The projective assertion is Milnor's patching theorem.  We use the version for arbitrary projective modules, equivalently the equivalence between the category of projective $A$-modules and the category of projective gluing triples; see \cite[Section 2]{Milnor1971} and \cite[Theorem 2.3]{ChenLe2020}.  The finite-generation hypothesis is needed only when one wants to preserve finite generation, not for projectivity itself.
\end{proof}

The flat part of Theorem \ref{thm:patching} is important here because a finite flat resolution need not be a projective resolution.  The projective part is used independently for the ordinary global dimension.

\begin{lemma}\label{lem:lift-split-complex}
Let $T\to Q$ be a surjective homomorphism onto a field.  Every bounded exact complex $V_\bullet$ of $Q$-vector spaces is isomorphic to
\[
Q\otimes_TG_\bullet
\]
for some bounded split exact complex $G_\bullet$ of free $T$-modules.
\end{lemma}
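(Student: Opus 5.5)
Since $Q$ is a field, every bounded exact complex $V_\bullet$ of $Q$-vector spaces splits: writing $B_i=\operatorname{Im}(V_{i+1}\to V_i)$, the exactness gives short exact sequences $0\to B_i\to V_i\to B_{i-1}\to 0$, and choosing complements yields an isomorphism of complexes
\[
V_\bullet\;\cong\;\bigoplus_i \bigl(B_i\xrightarrow{\ \mathrm{id}\ }B_i\bigr),
\]
where the $i$-th summand is the two-term complex concentrated in degrees $i+1$ and $i$. Since $V_\bullet$ is bounded, only finitely many $B_i$ are nonzero. My plan is to lift each elementary piece separately and then take the direct sum.

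For each $i$, choose a $Q$-basis $\{e_\lambda\}_{\lambda\in\Lambda_i}$ of $B_i$ and put $F_i=T^{(\Lambda_i)}$, the free $T$-module on the same index set. Let $E_i$ be the complex $F_i\xrightarrow{\mathrm{id}}F_i$ in degrees $i+1$ and $i$, and set
\[
G_\bullet=\bigoplus_i E_i,\qquad\text{so}\qquad G_k=F_{k-1}\oplus F_k .
\]
Each $E_i$ is contractible, hence split exact, and a finite direct sum of split exact complexes is split exact. Thus $G_\bullet$ is a bounded split exact complex of free $T$-modules.

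Because $\rho:T\to Q$ is surjective, $Q\otimes_T T^{(\Lambda_i)}\cong Q^{(\Lambda_i)}\cong B_i$ via the chosen basis. The functor $Q\otimes_T-$ is additive and sends identity maps to identity maps, so
\[
Q\otimes_T E_i\;\cong\;\bigl(B_i\xrightarrow{\ \mathrm{id}\ }B_i\bigr)
\]
as complexes. Summing over $i$ and composing with the splitting isomorphism from the first step gives $Q\otimes_T G_\bullet\cong V_\bullet$.

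The only point requiring any care is making the splitting isomorphism of $V_\bullet$ explicit, and this is standard linear algebra. The surjectivity of $T\to Q$ is not really needed, since any ring map $T\to Q$ would give $Q\otimes_T T^{(\Lambda)}\cong Q^{(\Lambda)}$; it is included only because it is the setting in which the lemma will be used. Infinite bases cause no difficulty, because free modules on arbitrary index sets are allowed.
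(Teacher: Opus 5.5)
Your proposal is correct and is essentially the paper's proof. The paper writes $V_i\cong B_i\oplus B_{i-1}$ with differential $(x,y)\mapsto(y,0)$ and lifts each $B_i$ to a free $T$-module on a chosen basis, which is your direct sum of elementary complexes $B_i\xrightarrow{\mathrm{id}}B_i$ written degreewise. Your remark that surjectivity of $T\to Q$ is not needed for this lemma is also right; the paper uses it only later, to get surjectivity of $\delta_i$ in the patching argument.
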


\begin{proof}
Write the differential as $\partial_i:V_i\to V_{i-1}$ and put
\[
B_i=\ker(\partial_i)=\operatorname{Im}(\partial_{i+1}).
\]
At the ends we put $B_{-1}=B_n=0$ after enlarging the indexing interval if necessary.  Exactness gives short exact sequences
\[
0\longrightarrow B_i\longrightarrow V_i
\xrightarrow{\partial_i}B_{i-1}\longrightarrow0.
\]
Since $Q$ is a field, these sequences split.  Choose splittings so that
\begin{equation}\label{eq:split-V}
V_i\cong B_i\oplus B_{i-1}
\end{equation}
and, in these coordinates,
\[
\partial_i(x,y)=(y,0).
\]
For every $i$, choose a basis $\mathcal B_i$ of $B_i$ and let
\[
\widetilde B_i=T^{(\mathcal B_i)}.
\]
Then
\[
Q\otimes_T\widetilde B_i\cong B_i.
\]
Set
\[
G_i=\widetilde B_i\oplus\widetilde B_{i-1}
\]
and define the differential by the same formula
\[
(x,y)\longmapsto(y,0).
\]
The resulting complex $G_\bullet$ is split exact, each $G_i$ is free over $T$, and the chosen bases and splittings in \eqref{eq:split-V} give a chain isomorphism
\[
Q\otimes_TG_\bullet\cong V_\bullet.
\]
\end{proof}

\section{Homological dimensions of torsion modules over the reduced quotient}\label{sec:torsion-comparison}

We now prove the change-of-rings theorem that yields the two dimension formulas.

\begin{theorem}[Torsion comparison theorem]\label{thm:torsion-comparison}
Let $R$ be a strong $\phi$-ring, set $N=\Nil(R)$ and $D=R/N$, and let $M$ be a torsion $D$-module.  Regard $M$ as an $R$-module through the quotient map $R\to D$.  Then
\begin{equation}\label{eq:fd-torsion-equality}
\fd_R(M)=\fd_D(M)
\end{equation}
and
\begin{equation}\label{eq:pd-torsion-equality}
\pd_R(M)=\pd_D(M).
\end{equation}
\end{theorem}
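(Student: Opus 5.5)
We prove the two inequalities in each of \eqref{eq:fd-torsion-equality} and \eqref{eq:pd-torsion-equality} separately; the direction $\leq$ from $R$ to $D$ uses Lemma \ref{lem:Tor-N-torsion-zero}, and the reverse uses the Milnor square of Theorem \ref{thm:canonical-milnor} together with Theorem \ref{thm:patching}.

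\emph{Step 1: $\fd_D(M)\leq\fd_R(M)$ and $\pd_D(M)\leq\pd_R(M)$.} Take an $R$-projective (resp.\ flat) resolution $P_\bullet\to M$ of length $n=\pd_R(M)$ (resp.\ $\fd_R(M)$). By Lemma \ref{lem:Tor-N-torsion-zero}, $\Tor^R_i(D,M)=0$ for $i\geq1$, so $D\otimes_RP_\bullet$ is exact and resolves $D\otimes_RM\cong M$ (as $NM=0$). Its terms are $D$-projective (resp.\ $D$-flat), giving the inequality. If the dimension over $R$ is infinite there is nothing to prove.

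\emph{Step 2: $\fd_R(M)\leq\fd_D(M)$ and $\pd_R(M)\leq\pd_D(M)$.} Let $F_\bullet\to M\to0$ be a flat (resp.\ projective) $D$-resolution of length $n<\infty$, and consider the augmented complex $0\to F_n\to\cdots\to F_0\to M\to 0$. Tensoring the truncated complex $F_\bullet$ with $Q$ (flat over $D$) gives an exact bounded complex $Q\otimes_DF_\bullet$ of $Q$-vector spaces, since $Q\otimes_DM=0$ because $M$ is torsion; note this complex is exact in every degree including $0$. By Lemma \ref{lem:lift-split-complex} there is a bounded split exact complex $G_\bullet$ of free $T$-modules and a chain isomorphism $\theta_\bullet:Q\otimes_TG_\bullet\cong Q\otimes_DF_\bullet$. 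Define $E_i=P(G_i,F_i;\theta_i)$ as in \eqref{eq:patched-module}; the differentials are compatible with $\theta_\bullet$, so $E_\bullet$ is a complex of $R$-modules, flat (resp.\ projective) by Theorem \ref{thm:patching}, with $T\otimes_RE_\bullet\cong G_\bullet$ and $D\otimes_RE_\bullet\cong F_\bullet$.

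\emph{Step 3: homology of $E_\bullet$.} Since $E_i$ is the kernel of $G_i\oplus F_i\to Q\otimes_DF_i$, $(x,y)\mapsto\theta_i(1\otimes x)-1\otimes y$, and this map is surjective (already on the $F_i$ summand, composing with the surjection $G_i\to Q\otimes_TG_i$), we get a short exact sequence of complexes
\[
0\longrightarrow E_\bullet\longrightarrow G_\bullet\oplus F_\bullet\longrightarrow Q\otimes_DF_\bullet\longrightarrow0.
\]
The long exact homology sequence, with $G_\bullet$ and $Q\otimes_DF_\bullet$ acyclic and $F_\bullet$ having homology $M$ in degree $0$ only, yields $H_i(E_\bullet)=0$ for $i\geq1$ and $H_0(E_\bullet)\cong H_0(F_\bullet)=M$. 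Checking that this isomorphism is $R$-linear and compatible with the augmentation $E_0\to F_0\to M$ is routine. Thus $E_\bullet$ is a flat (resp.\ projective) $R$-resolution of $M$ of length $n$.

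The main obstacle is Step 2: making the patching degreewise functorial so that the differentials of $G_\bullet$ and $F_\bullet$ glue, and verifying the surjectivity giving the short exact sequence of complexes; this relies on having the chain isomorphism $\theta_\bullet$ (not just termwise isomorphisms), which is exactly what Lemma \ref{lem:lift-split-complex} supplies.
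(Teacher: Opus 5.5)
Your proposal is correct and follows the paper's proof essentially step for step: base change along $R\to D$ using Lemma~\ref{lem:Tor-N-torsion-zero} for one inequality, and for the other, lifting $Q\otimes_D F_\bullet$ to a split exact complex of free $T$-modules, patching degreewise via Theorem~\ref{thm:patching}, and reading off the homology from the short exact sequence of complexes. One small slip: surjectivity of $G_i\oplus F_i\to Q\otimes_D F_i$ comes from the $G_i$ summand, because $T\to Q$ is onto and $\theta_i$ is an isomorphism, not from the $F_i$ summand, since $F_i\to Q\otimes_D F_i$ is generally not surjective.
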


\begin{proof}
We prove the two equalities separately.

\medskip
\noindent\emph{Flat dimension: the inequality $\fd_D(M)\leq\fd_R(M)$.}
Assume first that $\fd_R(M)=n<\infty$.  Choose an $R$-flat resolution
\begin{equation}\label{eq:R-flat-res}
0\longrightarrow F_n\longrightarrow\cdots\longrightarrow F_0
\longrightarrow M\longrightarrow0.
\end{equation}
By Lemma \ref{lem:Tor-N-torsion-zero},
\[
\Tor_i^R(D,M)=0\qquad(i\geq1).
\]
Therefore applying $D\otimes_R-$ to \eqref{eq:R-flat-res} gives an exact sequence
\[
0\longrightarrow D\otimes_RF_n\longrightarrow\cdots
\longrightarrow D\otimes_RF_0\longrightarrow M\longrightarrow0,
\]
where we have used $D\otimes_RM\cong M$.  Each $D\otimes_RF_i$ is flat over $D$.  Hence
\[
\fd_D(M)\leq\fd_R(M).
\]
The same inequality is automatic when $\fd_R(M)=\infty$.

\medskip
\noindent\emph{Flat dimension: the reverse inequality.}
If $\fd_D(M)=\infty$, the preceding inequality already implies $\fd_R(M)=\infty$.  Suppose therefore that
\[
\fd_D(M)=n<\infty
\]
and choose a flat $D$-resolution
\begin{equation}\label{eq:D-flat-res}
0\longrightarrow F_n\longrightarrow\cdots\longrightarrow F_0
\longrightarrow M\longrightarrow0.
\end{equation}
Let $F_\bullet$ denote the complex consisting of the modules $F_n,\ldots,F_0$, without the terminal module $M$.  Its homology is
\[
H_i(F_\bullet)=0\quad(i>0),
\qquad
H_0(F_\bullet)=M.
\]
Put
\[
V_\bullet=Q\otimes_DF_\bullet.
\]
Since $Q$ is flat over $D$ and $M$ is torsion,
\[
Q\otimes_DM=0.
\]
Hence $V_\bullet$ is a bounded exact complex of $Q$-vector spaces.  By Lemma \ref{lem:lift-split-complex}, choose a bounded split exact complex $G_\bullet$ of free $T$-modules and a chain isomorphism
\begin{equation}\label{eq:alpha-complex}
\alpha_\bullet:
Q\otimes_TG_\bullet
\xrightarrow{\ \sim\ }
Q\otimes_DF_\bullet=V_\bullet.
\end{equation}
For every $i$, define the pullback module
\begin{equation}\label{eq:E_i}
E_i
=
G_i\times_{V_i}F_i
=
\left\{(g,f)\in G_i\oplus F_i:
\alpha_i(1\otimes g)=1\otimes f\right\}.
\end{equation}
The canonical square \eqref{eq:canonical-square} is a Milnor square and $T\to Q$ is surjective.  Since $G_i$ is free over $T$ and $F_i$ is flat over $D$, Theorem \ref{thm:patching}(1) shows that $E_i$ is flat over $R$.

The differentials of $G_\bullet$ and $F_\bullet$ preserve the compatibility relation in \eqref{eq:E_i}, because $\alpha_\bullet$ is a chain map.  Thus the $E_i$ form a complex $E_\bullet$.  For every $i$ there is a short exact sequence
\begin{equation}\label{eq:degreewise-MV}
0\longrightarrow E_i
\longrightarrow G_i\oplus F_i
\xrightarrow{\ \delta_i\ }
V_i
\longrightarrow0,
\end{equation}
where
\[
\delta_i(g,f)=\alpha_i(1\otimes g)-1\otimes f.
\]
The map $\delta_i$ is surjective because $T\to Q$ is surjective, so the canonical map $G_i\to Q\otimes_TG_i$ is surjective.  The sequences \eqref{eq:degreewise-MV} are compatible with the differentials and therefore assemble to a short exact sequence of complexes
\begin{equation}\label{eq:complex-MV}
0\longrightarrow E_\bullet
\longrightarrow G_\bullet\oplus F_\bullet
\longrightarrow V_\bullet
\longrightarrow0.
\end{equation}
Both $G_\bullet$ and $V_\bullet$ are exact.  The long exact homology sequence of \eqref{eq:complex-MV} therefore gives
\[
H_i(E_\bullet)\cong H_i(F_\bullet)
\]
for every $i$.  Consequently
\[
H_i(E_\bullet)=0\quad(i>0),
\qquad
H_0(E_\bullet)\cong M.
\]
Thus the canonical augmentation $E_0\twoheadrightarrow H_0(E_\bullet)\cong M$ turns $E_\bullet$ into a flat $R$-resolution of $M$ of length $n$.  Hence
\[
\fd_R(M)\leq n=\fd_D(M).
\]
Together with the first inequality this proves \eqref{eq:fd-torsion-equality}.

\medskip
\noindent\emph{Projective dimension: the inequality $\pd_D(M)\leq\pd_R(M)$.}
If $\pd_R(M)=n<\infty$, apply $D\otimes_R-$ to a projective $R$-resolution of length $n$.  Lemma \ref{lem:Tor-N-torsion-zero} gives exactness after tensoring, and $D\otimes_RP$ is projective over $D$ whenever $P$ is projective over $R$.  Hence
\[
\pd_D(M)\leq\pd_R(M).
\]

\medskip
\noindent\emph{Projective dimension: the reverse inequality.}
If $\pd_D(M)=\infty$, the preceding inequality forces $\pd_R(M)=\infty$.  Assume that
\[
\pd_D(M)=n<\infty
\]
and choose a projective $D$-resolution
\begin{equation}\label{eq:D-proj-res}
0\longrightarrow P_n\longrightarrow\cdots\longrightarrow P_0
\longrightarrow M\longrightarrow0.
\end{equation}
As above, let $P_\bullet$ be the complex formed by $P_n,\ldots,P_0$.  The complex
\[
W_\bullet=Q\otimes_DP_\bullet
\]
is bounded and exact.  Lemma \ref{lem:lift-split-complex} gives a bounded split exact complex $H_\bullet$ of free $T$-modules and a chain isomorphism
\[
\beta_\bullet:Q\otimes_TH_\bullet
\xrightarrow{\ \sim\ }
W_\bullet.
\]
For each $i$ put
\[
L_i=H_i\times_{W_i}P_i.
\]
Both $H_i$ and $P_i$ are projective over $T$ and $D$, respectively.  Milnor patching, Theorem \ref{thm:patching}(2), gives that $L_i$ is projective over $R$.  As in the flat case, the $L_i$ form a complex $L_\bullet$ and there is a short exact sequence of complexes
\[
0\longrightarrow L_\bullet
\longrightarrow H_\bullet\oplus P_\bullet
\longrightarrow W_\bullet
\longrightarrow0.
\]
The two outer complexes $H_\bullet$ and $W_\bullet$ are exact, and hence
\[
H_i(L_\bullet)\cong H_i(P_\bullet).
\]
Thus $L_\bullet$ is a projective $R$-resolution of $M$ of length $n$.  Therefore
\[
\pd_R(M)\leq\pd_D(M).
\]
This proves \eqref{eq:pd-torsion-equality}.
\end{proof}

\begin{corollary}\label{cor:cyclic-dimension-equality}
Let $0\neq J$ be an ideal of $D=R/N$.  Then
\[
\fd_R(D/J)=\fd_D(D/J)
\]
and
\[
\pd_R(D/J)=\pd_D(D/J).
\]
\end{corollary}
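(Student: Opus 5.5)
This corollary follows by applying Theorem \ref{thm:torsion-comparison} to the module $M=D/J$. The only thing to verify is the hypothesis, namely that $D/J$ is a torsion $D$-module whenever $J\neq 0$. Throughout, $R$ is the strong $\phi$-ring fixed in this section, with $N=\Nil(R)$ and $D=R/N$.

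For the torsion check, recall that $D$ is a domain because $N$ is prime. Choose $0\neq d\in J$. Then $d$ annihilates $D/J$, since $d(x+J)=dx+J=J$ for every $x\in D$. So every element of $D/J$ is killed by a nonzero element of $D$, and $D/J$ is torsion. (If $J=D$, then $D/J=0$ and both sides of each equality are the dimension of the zero module, so there is nothing to prove.)

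Next I regard $D/J$ as an $R$-module through $\pi:R\to D$. This is the same module as $R/I$, where $I=\pi^{-1}(J)$, which matches the identification $R/I\cong D/(I/N)$ from Proposition \ref{prop:nonnil-contains-N}. Theorem \ref{thm:torsion-comparison} then gives
\[
\fd_R(D/J)=\fd_D(D/J),\qquad \pd_R(D/J)=\pd_D(D/J).
\]

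Every step is immediate; the only point needing attention is $J\neq 0$. That hypothesis is exactly what makes $Q\otimes_D(D/J)=0$, which the torsion comparison theorem uses when it patches resolutions along the Milnor square.
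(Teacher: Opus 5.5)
Your proof is correct and matches the paper's argument: a nonzero $d\in J$ annihilates $D/J$, so $D/J$ is a torsion $D$-module, and Theorem \ref{thm:torsion-comparison} applies directly. Your side remarks, on the case $J=D$ and on the identification with $R/\pi^{-1}(J)$, are harmless but not needed.
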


\begin{proof}
The $D$-module $D/J$ is torsion: if $0\neq d\in J$, then $d(D/J)=0$.  Apply Theorem \ref{thm:torsion-comparison}.
\end{proof}

\begin{theorem}[Comparison theorem for strong $\phi$-rings]\label{thm:strong-comparison-equalities}
Let $R$ be a strong $\phi$-ring and put $N=\Nil(R)$.  Then
\begin{equation}\label{eq:strong-weak-equality}
\phiwgd(R)=\wgd(R/N)
\end{equation}
and
\begin{equation}\label{eq:strong-global-equality}
\phigd(R)=\gld(R/N).
\end{equation}
\end{theorem}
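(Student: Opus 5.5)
The plan is to combine the cyclic formulas for the relative dimensions with Corollary \ref{cor:cyclic-dimension-equality}, and then to check that the zero ideal of $D$ does not change the suprema on the $D$-side.

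\emph{Weak dimension.} By Proposition \ref{prop:nonnil-contains-N} (formula \eqref{eq:phi-w-Dquotients}),
\[
\phiwgd(R)=\sup\{\fd_R(D/J):0\neq J\subseteq D\},
\]
and Corollary \ref{cor:cyclic-dimension-equality} replaces each term by $\fd_D(D/J)$. It therefore suffices to show
\[
\wgd(D)=\sup\{\fd_D(D/J):0\neq J\subseteq D\}.
\]
The inequality $\le$ is the only issue, since $\wgd(D)=\sup_J\fd_D(D/J)$ over all ideals $J$, including $J=0$; but $D/0=D$ is flat, so that term contributes $0$. Hence the supremum over all ideals equals the supremum over nonzero ones, unless every term is $0$. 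In that degenerate case both sides are $0$ anyway.

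\emph{Global dimension.} Formula \eqref{eq:phi-global-cyclic} together with the argument of Proposition \ref{prop:nonnil-contains-N} (every nonnil $I$ contains $N$, and $R/I\cong D/(I/N)$ with $I/N\neq0$, and conversely) gives
\[
\phigd(R)=\sup\{\pd_R(D/J):0\neq J\subseteq D\}.
\]
Corollary \ref{cor:cyclic-dimension-equality} converts this to $\sup\{\pd_D(D/J):J\neq0\}$. By Auslander's theorem, $\gld(D)=\sup_J\pd_D(D/J)$ over all ideals, and $D/0=D$ is projective, so again the zero ideal is irrelevant.

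The only delicate point is bookkeeping. One must confirm that the cyclic formula \eqref{eq:phi-global-cyclic} (quoted from \cite{ZhangXingQi}) and Theorem \ref{thm:ZXQ-weak} apply to every strong $\phi$-ring; a strong $\phi$-ring is an NP-ring in the sense required there, as the earlier sections implicitly assume. One must also confirm that the comparison in Corollary \ref{cor:cyclic-dimension-equality} is valid for infinite values, which Theorem \ref{thm:torsion-comparison} already covers.
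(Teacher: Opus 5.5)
Your proposal is correct and is essentially the paper's proof. Like the paper, you reduce via Proposition \ref{prop:nonnil-contains-N} and formula \eqref{eq:phi-global-cyclic} to suprema over $D/J$ with $J\neq 0$, apply Corollary \ref{cor:cyclic-dimension-equality}, and finish with the cyclic characterizations of $\wgd(D)$ and $\gld(D)$, noting that the ideal $J=0$ contributes only $0$. The one cosmetic difference is the case where $D$ is a field: the paper treats it separately via Theorems \ref{thm:small-weak} and \ref{thm:small-global}, whereas you absorb it into the zero-ideal bookkeeping, which works with the convention (implicit in Theorem \ref{thm:ZXQ-weak}) that these suprema take values in $\{0,1,\dots,\infty\}$.
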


\begin{proof}
Set $D=R/N$.  If $D$ is a field, then $R$ is $\phi$-von Neumann regular.  Hence $\phiwgd(R)=0$ by Theorem \ref{thm:small-weak}, while $\phigd(R)=0$ by Theorem \ref{thm:small-global}; of course $\wgd(D)=\gld(D)=0$.

Assume that $D$ is not a field.  By Proposition \ref{prop:nonnil-contains-N} and Corollary \ref{cor:cyclic-dimension-equality},
\[
\begin{aligned}
\phiwgd(R)
&=\sup_{0\neq J\subseteq D}\fd_R(D/J)\\
&=\sup_{0\neq J\subseteq D}\fd_D(D/J).
\end{aligned}
\]
The standard cyclic characterization of weak global dimension gives
\[
\sup_{0\neq J\subseteq D}\fd_D(D/J)=\wgd(D).
\]
Here the ideal $J=0$ may be omitted because it contributes only the free module $D$, and a nonfield domain cannot have weak global dimension $0$.  This proves \eqref{eq:strong-weak-equality}.

Similarly, formula \eqref{eq:phi-global-cyclic} and Corollary \ref{cor:cyclic-dimension-equality} give
\[
\begin{aligned}
\phigd(R)
&=\sup_{0\neq J\subseteq D}\pd_R(D/J)\\
&=\sup_{0\neq J\subseteq D}\pd_D(D/J)
=\gld(D),
\end{aligned}
\]
using the standard cyclic characterization of global dimension.  Again $J=0$ may be omitted because $\pd_DD=0$ and a nonfield domain has positive global dimension.  This proves \eqref{eq:strong-global-equality}.
\end{proof}

\begin{remark}\label{rem:questions-ZXQ}
Theorem \ref{thm:strong-comparison-equalities} gives affirmative answers to the two comparison questions of Zhang--Xing--Qi concerning strong $\phi$-rings.  The proof does not require $N^2=0$, a splitting of $R\to R/N$, coherence, or a finiteness hypothesis on $N$.  The essential structural fact is the Milnor square of Theorem \ref{thm:canonical-milnor}.
\end{remark}

We finish with a strongness criterion that uses the kernel identities.

\begin{proposition}\label{prop:R/N-strongly-flat}
Let $R$ be a $\phi$-ring and put $N=\Nil(R)$.  If $R/N$ is strongly $\phi$-flat as an $R$-module, then $R$ is strong.
\end{proposition}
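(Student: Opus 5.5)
We argue by contraposition: assume $R$ is not strong, so $K=\ker(R\to\phi(R))\neq 0$ by \eqref{eq:Kstrong}, and show that $D=R/N$ fails to be strongly $\phi$-flat.

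We need a usable consequence of strong $\phi$-flatness. From the definition in \cite{ZhangXingQi}, a strongly $\phi$-flat module $F$ satisfies $\Tor^R_i(R/I,F)=0$ for every nonnil ideal $I$ and every $i\geq 1$; this is the same Tor criterion that underlies Theorem~\ref{thm:ZXQ-weak}. So we will assume
\[
\Tor^R_1(R/aR,D)=0\quad\text{and}\quad \Tor^R_2(R/aR,D)=0
\]
for a suitable $a\notin N$, and seek a contradiction. If the definition only yields $\Tor_1$-vanishing, the argument below should still work using degree $1$ alone.

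Pick $a\in Z(R)\setminus N$ and put $A=(0:_Ra)\subseteq K$, so $A\neq 0$. We compute both Tor groups from the free resolution fragment
\[
R\xrightarrow{\,\cdot\,}\ \cdots\ \to R\xrightarrow{a}R\to R/aR\to 0.
\]
Since $\Tor^R_1(R/aR,M)\cong M[a]/\text{(image)}$ is not the cleanest form, we instead use $\Tor^R_1(R/aR,D)\cong\ker(aR\otimes_RD\to D)$. Now $aR\cong R/A$, so $aR\otimes_R D\cong D/AD$. Because $A\subseteq N$, we have $AD=0$, hence $aR\otimes_RD\cong D$. The map to $D$ is multiplication by $\bar a\neq 0$, which is injective in the domain $D$. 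So $\Tor_1$ vanishes and gives no obstruction.

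The obstruction must therefore come from $\Tor_2$, computed by dimension shifting:
\[
\Tor^R_2(R/aR,D)\cong\Tor^R_1(aR,D)\cong\Tor^R_1(R/A,D)\cong\ker(A\otimes_RD\to D).
\]
We have $A\otimes_RD\cong A/NA$, and $NA\subseteq NK=0$ by Proposition~\ref{prop:K-identities}, so $A\otimes_RD\cong A$. The map $A\to D$ is induced by the inclusion $A\subseteq N$, hence is zero. Therefore
\[
\Tor^R_2(R/aR,D)\cong A\neq 0,
\]
which contradicts strong $\phi$-flatness. As a cross-check, $\Tor^R_i(R/aR,D)\cong\Tor^R_i(R/aR,R/N)$, and Lemma~\ref{lem:Tor-N-torsion-zero} shows these groups vanish in the strong case, so the nonvanishing here really is forced by non-strongness.

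The main obstacle is the first step: confirming that strong $\phi$-flatness in \cite{ZhangXingQi} gives vanishing of $\Tor_i(R/I,-)$ for all $i\geq 1$, and not only for $i=1$ or only against nonnil-torsion-type modules. If only $\Tor_1$-vanishing is available, the computation above shows $\Tor_1$ vanishes anyway, and the plan fails. In that case we would test instead against $\Tor^R_1(R/A',D)$ for a suitable nonnil ideal, or use the characterization of strongly $\phi$-flat modules via $\Tor_1(T,F)=0$ for all $\phi$-torsion modules $T$, applied to $T=K$. There one gets $\Tor^R_1(K,D)\cong\ker(K\otimes N\to\cdots)$, and by the proof of Lemma~\ref{lem:C-homology} one needs $\Tor^R_1(D,K)$. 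That lemma shows $\Tor^R_1(D,K)=0$, so this alternative would also have to be replaced by $\Tor^R_2(D,K)\cong T_1\neq 0$ in the Pr\"ufer case. The general non-Pr\"ufer case would then rely on the $A$ computation above.
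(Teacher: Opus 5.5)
Your proof is correct and is essentially the paper's argument. Both take $A=(0:_Ra)\subseteq K$, use $NA=0$ from Proposition~\ref{prop:K-identities}, and compute $\Tor^R_2(R/aR,R/N)\cong\Tor^R_1(R/A,R/N)\cong A$ (the paper writes the last group as $(A\cap N)/AN$); you argue contrapositively with $a\in Z(R)\setminus N$, while the paper concludes directly that $(0:a)=0$ for every $a\notin N$.

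Your worry about the definition is unnecessary: the paper uses exactly the fact that strong $\phi$-flatness kills $\Tor^R_2$ against the $\phi$-torsion module $R/aR$. Your own observation that $\Tor^R_1(R/aR,R/N)=0$ always holds already shows that vanishing in degree one alone could never suffice.
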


\begin{proof}
Let $a\notin N$ and put
\[
A=(0:_Ra).
\]
Then $A\subseteq K$, so Proposition \ref{prop:K-identities} gives
\[
AN=0.
\]
We compute $\Tor^R_2(R/aR,R/N)$.  From
\[
0\longrightarrow aR\longrightarrow R\longrightarrow R/aR\longrightarrow 0
\]
and $aR\cong R/A$, one obtains the standard identification
\[
\Tor^R_2(R/aR,R/N)
\cong
\Tor^R_1(R/A,R/N)
\cong
\frac{A\cap N}{AN}.
\]
Since $A\subseteq N$ and $AN=0$,
\[
\Tor^R_2(R/aR,R/N)\cong A.
\]
The module $R/aR$ is $\phi$-torsion.  Strong $\phi$-flatness of $R/N$ gives
\[
\Tor^R_2(R/aR,R/N)=0.
\]
Hence $(0:a)=0$ for every $a\notin N$.  Thus every nonnilpotent element is regular, so
\[
Z(R)=N.
\]
\end{proof}

\begin{corollary}\label{cor:all-phiflat-strong}
If every $\phi$-flat $R$-module is strongly $\phi$-flat, then the $\phi$-ring $R$ is strong.
\end{corollary}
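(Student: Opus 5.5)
The plan is to deduce the corollary from Proposition \ref{prop:R/N-strongly-flat} by showing that $R/N$ is always a $\phi$-flat $R$-module. Once that is known, the hypothesis makes $R/N$ strongly $\phi$-flat, and Proposition \ref{prop:R/N-strongly-flat} forces $Z(R)=N$, i.e.\ $R$ is strong.

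The key step is therefore the $\phi$-flatness of $D=R/N$. I would use the standard definition from the $\phi$-flat literature: an $R$-module $F$ is $\phi$-flat if $\Tor^R_1(R/I,F)=0$ for every nonnil ideal $I$. Equivalently, $I\otimes_RF\to F$ is injective for every nonnil ideal $I$. I would compute $\Tor^R_1(R/I,R/N)\cong (I\cap N)/IN$. By Proposition \ref{prop:nonnil-contains-N}, every nonnil ideal satisfies $N\subseteq I$, so $I\cap N=N$. Choosing $a\in I\setminus N$, Lemma \ref{lem:N=aN} gives
\[
N=aN\subseteq IN\subseteq N.
\]
Hence $IN=N$ and the Tor group vanishes. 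So $R/N$ is $\phi$-flat for every $\phi$-ring.

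If the paper's definition of $\phi$-flatness is the variant requiring only $\Tor^R_1(M,F)=0$ for $\phi$-torsion $M$, I would reduce to cyclic modules. Every $\phi$-torsion module is a filtered colimit of finitely generated $\phi$-torsion modules, and these are built by extensions from cyclic modules $R/I$ with $I$ nonnil; Tor commutes with filtered colimits. If the definition uses finitely generated nonnil ideals only, the computation above already covers it.

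The main obstacle is only matching the exact definition of $\phi$-flat used in \cite{ZhangXingQi}. The homological content, $IN=N$ for nonnil $I$, is immediate from dividedness.
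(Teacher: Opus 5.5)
Your proposal is correct and matches the paper's proof exactly: observe that $R/N$ is $\phi$-flat and apply Proposition \ref{prop:R/N-strongly-flat}. The paper only asserts that $R/N$ is $\phi$-flat, and your computation $\Tor^R_1(R/I,R/N)\cong (I\cap N)/IN=N/IN=0$, using $N=aN\subseteq IN$ for $a\in I\setminus N$, supplies that missing justification.
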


\begin{proof}
The module $R/N$ is $\phi$-flat.  Apply Proposition \ref{prop:R/N-strongly-flat}.
\end{proof}

\begin{corollary}\label{cor:nonnil-inj-strong}
Let $R$ be a $\phi$-ring.  If every nonnil-injective $R$-module is strongly nonnil-injective, then $R$ is strong.
\end{corollary}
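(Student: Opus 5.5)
Suppose every nonnil-injective $R$-module is strongly nonnil-injective. The plan is to reduce to Proposition \ref{prop:R/N-strongly-flat} (or Corollary \ref{cor:all-phiflat-strong}) by character-module duality. Write $M^+=\operatorname{Hom}_{\mathbb Z}(M,\mathbb Q/\mathbb Z)$. The facts needed from \cite{ZhangXingQi} are the duality statements.
\begin{itemize}
\item A module $F$ is $\phi$-flat (resp.\ strongly $\phi$-flat) if and only if $F^+$ is nonnil-injective (resp.\ strongly nonnil-injective).
\item Concretely, the standard isomorphism $\Ext^i_R(C,F^+)\cong\Tor_i^R(C,F)^+$ translates the vanishing conditions defining the two notions.
\end{itemize}

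First take $F=R/N$, which is $\phi$-flat (as used in the proof of Corollary \ref{cor:all-phiflat-strong}). Then $F^+$ is nonnil-injective, so by hypothesis it is strongly nonnil-injective. Hence $\Ext^i_R(C,F^+)=0$ for all $\phi$-torsion $C$ and $i\ge1$. By the isomorphism above this gives $\Tor_i^R(C,F)^+=0$. Since $\mathbb Q/\mathbb Z$ is a faithful injective cogenerator, $\Tor_i^R(C,F)=0$, so $R/N$ is strongly $\phi$-flat. Proposition \ref{prop:R/N-strongly-flat} then shows that $R$ is strong.

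The obstacle is the exact form of the definitions in \cite{ZhangXingQi}: I need strong nonnil-injectivity to be the Ext-vanishing condition against $\phi$-torsion modules in all positive degrees. Proposition \ref{prop:R/N-strongly-flat} itself only uses $\Tor_2^R(R/aR,R/N)=0$ with $R/aR$ $\phi$-torsion, so even a degree-restricted definition suffices. In that case we need only $\Ext^2_R(R/aR,(R/N)^+)=0$.

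If the duality for nonnil-injectivity is unclear, there is a fallback. Use that $(R/N)^+$ is a $D$-module, check nonnil-injectivity directly via $\Ext^1_R(R/I,(R/N)^+)\cong\Tor_1^R(R/I,R/N)^+$ for nonnil $I$, and then apply the same argument. Here $\Tor_1^R(R/I,R/N)\cong (I\cap N)/IN=N/IN$, which vanishes since $N=aN\subseteq IN$ for $a\in I\setminus N$ by Lemma \ref{lem:N=aN}. This gives the required vanishing, and we finish by evaluating $\Tor_2^R(R/aR,R/N)\cong(0:a)$ as in the proof of Proposition \ref{prop:R/N-strongly-flat}.
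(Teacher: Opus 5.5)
Your proposal is correct and is essentially the paper's argument. The paper also takes $D^+=\operatorname{Hom}_{\mathbb Z}(R/N,\mathbb Q/\mathbb Z)$, notes that it is nonnil-injective and hence strongly nonnil-injective, and combines $\Ext^2_R(R/aR,D^+)\cong\Tor_2^R(R/aR,D)^+$ with the computation $\Tor_2^R(R/aR,D)\cong(0:_Ra)$ from Proposition~\ref{prop:R/N-strongly-flat}; the only difference is that it works directly in degree $2$ with $R/aR$ rather than first concluding that $R/N$ is strongly $\phi$-flat and then invoking that proposition.
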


\begin{proof}
Put $D=R/N$ and let
\[
D^+=\operatorname{Hom}_{\mathbb Z}(D,\mathbb Q/\mathbb Z).
\]
As in \cite{ZhangXingQi}, $D^+$ is nonnil-injective, hence strongly nonnil-injective by hypothesis.  Let $a\notin N$ and $A=(0:a)$.  Character duality gives
\[
\Ext^2_R(R/aR,D^+)
\cong
\operatorname{Hom}_{\mathbb Z}
\bigl(\Tor^R_2(R/aR,D),\mathbb Q/\mathbb Z\bigr).
\]
The calculation in Proposition \ref{prop:R/N-strongly-flat} gives
\[
\Tor^R_2(R/aR,D)\cong A.
\]
Strong nonnil-injectivity forces the Ext group to vanish.  Since $\mathbb Q/\mathbb Z$ is an injective cogenerator of abelian groups, $A=0$.  Hence $(0:a)=0$ for every $a\notin N$, and therefore $Z(R)=N$.
\end{proof}

\end{document}